\documentclass[a4paper,11pt]{amsart}
\usepackage{amssymb,amsmath,amsthm,color}
\usepackage{graphicx}
\usepackage{bm}
\usepackage{verbatim}
\usepackage{cite}
\usepackage{mathtools}
\usepackage{hyperref}
\numberwithin{equation}{section}

\theoremstyle{plain}
\newtheorem{thm}{Theorem}[section]

\newtheorem{prop}{Proposition}[section]
\newtheorem{lem}{Lemma}[section]

\theoremstyle{definition}
\newtheorem{rem}{Remark}[section]

\begin{document}

\title[Large-time behavior]
{Large-time behavior in an attraction--repulsion chemotaxis system}
\author[Hiroshi Wakui]{Hiroshi Wakui}
\address[Hiroshi Wakui]{Faculty of Engineering, University of Fukui, Fukui-shi, Fukui 910-8507, Japan}
\email{hwakui@u-fukui.ac.jp}
\author[Tetsuya Yamada]{Tetsuya Yamada}
\address[Tetsuya Yamada]{National Institute of Technology(KOSEN), Fukui College, Sabae, Fukui 916-8507, Japan. }
\email{yamada@fukui-nct.ac.jp}
\keywords{Stable constant steady states; decay estimates; large-time behavior;
attraction--repulsion chemotaxis system.}
\subjclass[2020]{35B35; 35B40; 35Q92}
\date{\today}

\begin{abstract}
We investigate the large-time behavior of small perturbations of stable constant steady states for an attraction-repulsion chemotaxis system in $n$-dimensional Euclidean space.
We consider integrable perturbations in one space dimension and, in higher dimensions, 
perturbations belonging to Lebesgue spaces with suitable exponents. 
We first establish decay estimates throughout the full admissible range of Lebesgue exponents and show that the nonlinear perturbation is asymptotically approximated by the corresponding linearized evolution. 
We then identify the leading term of the linearized evolution as an effective heat flow whose diffusion coefficient is explicitly determined by the parameters of the system. 
Consequently, when the integrability exponent is strictly smaller than the space dimension, 
the leading asymptotic profile is governed by the heat equation. 
At the critical endpoint, the difference between the nonlinear perturbation and the effective heat flow vanishes under the natural parabolic scaling. 
In the one-dimensional integrable case, this yields a Gaussian profile determined by the total mass of the initial perturbation.
\end{abstract}

\maketitle

%%%%%%%%%%%%%%%%%%%%%%%%%%%%%%%%%%%%%%%%%%%%%%%%%%%%%%%%%%%%%%%%%%%%
\section{Introduction} 

In this paper, 
we consider the large-time behavior of solutions to the following problem associated with an attraction-repulsion chemotaxis system:
\begin{equation}\label{P}\tag{P}
	\left\{
		\begin{aligned}
			&\partial_t u = \Delta u-\nabla\cdot(u\nabla (\beta_1B_{\lambda_1}-\beta_2 B_{\lambda_2})
			*u), 
			&&\qquad t>0, \ x \in \mathbb{R}^n, \\
			&u(0,x)=u_0(x), 
			&&\qquad x \in \mathbb{R}^n,
		\end{aligned}
	\right.
\end{equation}
where $n\ge 1$ and $\beta_j$, $\lambda_j$ $(j=1,2)$ are positive constants with $\lambda_1\ne \lambda_2$.
Here $\nabla B_{\lambda}*u$ expresses the convolution 
of $\nabla B_{\lambda}$ and $u$ with respect to $x \in \mathbb{R}^n$, that is 
\[
	(\nabla B_{\lambda}*u)(t,x)
	\coloneqq 
	\int_{\mathbb{R}^n}
		\nabla B_{\lambda}(x-y)
		u(t,y)
	\, \mathrm{d}y, 
\]
where 
\begin{align*}
	B_{\lambda}(x)
	\coloneqq \, &
	\frac{1}{(4\pi)^{\frac{n}{2}}}
	\int_0^\infty 
		\mathrm{e}^{
				-
				\lambda
				\sigma
				-
				\frac{|x|^2}{4\sigma}
			}
		\sigma^{-\frac{n}2}
	\, \mathrm{d}\sigma, 
	\qquad 
	\lambda > 0,\quad 
	x \in \mathbb{R}^n\setminus \{ 0 \}.
\end{align*} 

Problem \eqref{P} is the nonlocal formulation of the following attraction-repulsion chemotaxis system:
\begin{equation}\label{eq;at-re}\tag{ARKS}
  \left\{
  \begin{aligned}
  	\partial_t u - \Delta u \, + \, \, &\nabla \cdot \big(  u \nabla (\beta_1 \psi_1 - \beta_2 \psi_2) \big) = 0,&& \quad t>0,\quad x \in \mathbb{R}^{n},\\
 - &\Delta \psi_1 + \lambda_1 \psi_1 = u,&& \quad  t>0,\quad x \in \mathbb{R}^{n},\\
 - &\Delta \psi_2 + \lambda_2 \psi_2 = u ,&& \quad  t>0,\quad x \in \mathbb{R}^{n},\\
 &u(0,x)  = u_0(x),&& \quad x \in \mathbb{R}^{n},
 \end{aligned}
  \right.
\end{equation}
where $\beta_1, \beta_2, \lambda_1, \lambda_2 > 0$ are given constants.
Solving the second and third equations in system \eqref{eq;at-re} gives
$
	\psi_j
	=
	B_{\lambda_j}*u
	\,
	(j = 1, 2)
$, and then substituting these expressions into the first equation yields problem \eqref{P}.
In system \eqref{eq;at-re}, 
the function 
$
	u
	=
	u(t,x)
$ 
is the density of cells, 
the function 
$
	\psi_1
	=
	\psi_1(t,x)
$
represents the concentration of a chemoattractant, and 
the function 
$
	\psi_2
	=
	\psi_2(t,x)
$ 
represents the concentration of a chemorepellent.
This system \eqref{eq;at-re} 
has been studied in various frameworks
on the whole space $\mathbb{R}^n$.
For instance, the blow-up and global existence of solutions to
system \eqref{eq;at-re} have been extensively studied; see, \textit{e.g.}, \cite{Ho}, \cite{HoOg2}, 
\cite{NY2018J},
 \cite{NY2020R}, \cite{SW2015}.
See also \cite{JL2015}, \cite{NSYP1}, \cite{NSYP}, \cite{Ya1} for related results.

For every $A \in \mathbb{R}$,  
$
	( 
		u, \psi_1, \psi_2 
	) 
	= 
	( 
		A, A/\lambda_1, A/\lambda_2 
	)
$ 
is a steady state of system \eqref{eq;at-re},
and $u \equiv A$ is a steady state of problem \eqref{P}.
Let 
\begin{equation}\label{const;as}
	A_\ast
	\coloneqq 
	\left\{
	\begin{aligned}
		&\qquad \infty, 
		&&\qquad
		\dfrac{\beta_1}{\beta_2} \le 1, \quad 
		\dfrac{\beta_1}{\beta_2} \le \dfrac{\lambda_1}{\lambda_2}, \\
		&\dfrac{\lambda_1\lambda_2}{\beta_1\lambda_2-\beta_2\lambda_1}, 
		&&\qquad
		\dfrac{\beta_1}{\beta_2} > \dfrac{\lambda_1}{\lambda_2}, \quad 
		\dfrac{\beta_1}{\beta_2} \ge \left(\dfrac{\lambda_1}{\lambda_2}\right)^2, \\
		&\dfrac{\lambda_1-\lambda_2}{(\sqrt{\beta_1}-\sqrt{\beta_2})^2}, 
		&&\qquad
		\dfrac{\beta_1}{\beta_2} < \left(\dfrac{\lambda_1}{\lambda_2}\right)^2, \quad 
		\dfrac{\beta_1}{\beta_2} > 1.
	\end{aligned}
	\right.
\end{equation}
Wakui and Yamada \cite{WY} proved that every constant steady state $u \equiv A$ of problem \eqref{P} is stable in suitable Lebesgue spaces under the condition $0 < A < A_\ast$.

\begin{prop}[Theorem~2.1 in~\cite{WY}]\label{p1}
Let $p$ satisfy
\[ 
	\left\{
	\begin{aligned}
		&p =  1, &&\qquad \text{if} \quad  n=1, \\
		\frac{n}2 \, & < p \le  n, &&\qquad \text{if} \quad n \ge 2
	\end{aligned}
	\right.
\]
and let $q_0$ satisfy $n < q_0 \le 2p$.
If 
$
	0<A<A_\ast
$, 
then there exists $\varepsilon_\ast > 0$ such that 
for all $v_0 \in L^p(\mathbb{R}^n)$ with 
$
	\|
		v_0
	\|_p
	<
	\varepsilon_\ast
$, 
problem \eqref{P} 
admits a unique global solution $u$ corresponding to the initial data $u_0=A+v_0$, which satisfies the following properties:
\begin{align}
	&\label{regularity;uA} 
	u-A \in C([0,\infty);L^p(\mathbb{R}^n))\\
	&\label{decay;uA}
	\sup_{t>0}
		\|u(t)-A\|_p
	+
	\sup_{t>0}
		t^{\frac{n}2(\frac1p-\frac1{q_0})}
		\|
			u(t)
			-
			A
		\|_{q_0}
	\le 
	C_0
	\|
		u_0
		-
		A
	\|_p
\end{align}
for some $C_0 > 0$ independent of $v_0$.
\end{prop}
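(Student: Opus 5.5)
Writing $u=A+v$, the perturbation satisfies
\[
	\partial_t v = \Delta v - A\,\Delta\bigl(\beta_1 B_{\lambda_1}-\beta_2 B_{\lambda_2}\bigr)*v - \nabla\cdot\bigl(v\,\nabla(\beta_1 B_{\lambda_1}-\beta_2 B_{\lambda_2})*v\bigr),
\]
since $\nabla B_\lambda * A = 0$. The linear part is a Fourier multiplier with symbol
\[
	-m(\xi) = -|\xi|^2\Bigl(1 - \frac{A\beta_1}{\lambda_1+|\xi|^2} + \frac{A\beta_2}{\lambda_2+|\xi|^2}\Bigr).
\]
The plan is to show that $0<A<A_\ast$ is precisely the condition $\inf_{r\ge 0}\bigl(1-\frac{A\beta_1}{\lambda_1+r}+\frac{A\beta_2}{\lambda_2+r}\bigr)>0$. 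Then $m(\xi)\ge c|\xi|^2$, with $m(\xi)=|\xi|^2(1+O(|\xi|^{-2}))$ at infinity and $m$ real-analytic and radial.

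The first step is this algebra. Writing $g(r)=1-\frac{A\beta_1}{\lambda_1+r}+\frac{A\beta_2}{\lambda_2+r}$, positivity at $r=0$ gives $A(\beta_1\lambda_2-\beta_2\lambda_1)<\lambda_1\lambda_2$. An interior minimum exists exactly when $\beta_1/\beta_2<(\lambda_1/\lambda_2)^2$ and $\beta_1>\beta_2$, and there the minimum value yields the bound $(\lambda_1-\lambda_2)/(\sqrt{\beta_1}-\sqrt{\beta_2})^2$. This reproduces the three cases in \eqref{const;as}.

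The second step is the $L^p$--$L^q$ estimates for the linear semigroup $S(t)=\mathcal F^{-1}e^{-tm(\xi)}\mathcal F$. These are
\[
	\|S(t)f\|_q\le Ct^{-\frac n2(\frac1p-\frac1q)}\|f\|_p,\qquad
	\|\nabla S(t)f\|_q\le Ct^{-\frac n2(\frac1p-\frac1q)-\frac12}\|f\|_p,
\]
for $1\le p\le q\le\infty$. I would prove them by writing $e^{-tm}=e^{-t|\xi|^2}\cdot e^{-t(m-|\xi|^2)}\cdot$(correction). Alternatively, more robustly, I would split into low and high frequencies. At low frequency, $m(\xi)\sim g(0)|\xi|^2$ is smooth, so $e^{-tm(\xi)}$ rescaled by $\sqrt t$ has an $L^1$ kernel uniformly in $t$, which follows from Mikhlin-type bounds on $\partial^\alpha_\xi[e^{-tm(\xi)}]$ giving an integrable kernel. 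At high frequency, $m(\xi)=|\xi|^2+a-\frac{A\beta_1\lambda_1}{\lambda_1+|\xi|^2}+\dots$, which gives exponential decay $e^{-ct}$ times a heat-kernel-like $L^1$ bound. The key point is that the kernel $K_t$ satisfies $\|K_t\|_1\le C$ and $\|\nabla K_t\|_1\le Ct^{-1/2}$, with $K_t(x)=t^{-n/2}\Phi_t(x/\sqrt t)$ and $\Phi_t$ bounded in $L^1\cap L^\infty$ uniformly. Young's inequality then gives the estimates.

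The third step is a fixed point in the space $X$ with norm
\[
	\|v\|_X=\sup_t\|v(t)\|_p+\sup_t t^{\frac n2(\frac1p-\frac1{q_0})}\|v(t)\|_{q_0},
\]
applied to the Duhamel formula
\[
	v(t)=S(t)v_0-\int_0^t\nabla\cdot S(t-s)\bigl[v\,\nabla\mathcal B*v\bigr](s)\,ds,\qquad \mathcal B=\beta_1B_{\lambda_1}-\beta_2B_{\lambda_2}.
\]
The kernel $\nabla B_\lambda$ lies in $L^r$ for $r<n/(n-1)$ and is $O(e^{-c|x|})$, so by Hardy--Littlewood--Sobolev and Young we get $\|\nabla\mathcal B*v\|_{r}\le C\|v\|_{q}$ whenever $\frac1r\ge\frac1q-\frac1n$ (with $\frac1r\le \frac1q$).

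Estimating the product in $L^{q_0/2}$ by $\|v\|_{q_0}\|\nabla\mathcal B*v\|_{q_0}$, and applying the gradient semigroup bound from $L^{q_0/2}$ to $L^p$ and to $L^{q_0}$, produces time integrals of the form
\[
	\int_0^t (t-s)^{-\frac n2(\frac2{q_0}-\frac1p)-\frac12}\,s^{-n(\frac1p-\frac1{q_0})}\,ds .
\]
These require:
\begin{itemize}
\item $q_0/2\le p$, i.e.\ $q_0\le 2p$, for the bound into $L^p$;
\item $q_0>n$ for integrability at $s=t$;
\item $n(\frac1p-\frac1{q_0})<1$ at $s=0$;
\item overall scaling exponent $\le0$, so that the sup is finite for all $t$.
\end{itemize}
The scaling analysis uses $p\le n$: the total power of $t$ is $\frac12-\frac n2\cdot\frac1p\cdot$... and is nonpositive exactly when $p\le n$. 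Where it is negative, for large $t$ I use the exponential damping of $\nabla\mathcal B$ only through bounded constants, and should still be fine because the bounded time factor is harmless. In $n=1$ with $p=1$, I would use $\|\nabla\mathcal B*v\|_\infty\le C\|v\|_1$ directly.

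Smallness of $\|v_0\|_p$ then gives a contraction. Continuity in $L^p$ follows from strong continuity of $S(t)$ together with the integral estimates, and uniqueness holds in the class $X$.

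The main obstacle I expect is the second step: obtaining uniform-in-$t$ $L^1$ bounds for the kernel of $e^{-tm(\xi)}$ and its gradient across both frequency regimes, in particular at intermediate $|\xi|$ where $m$ is neither $\sim g(0)|\xi|^2$ nor $\sim|\xi|^2$. There I would rely on $g\ge c>0$ to get $e^{-ct|\xi|^2}$ domination of all $\xi$-derivatives, and then use a Sobolev-in-$\xi$ argument: $\|K\|_1\lesssim\|\hat K\|_{H^s}$ for $s>n/2$, after rescaling.
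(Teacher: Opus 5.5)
Your Steps~1 and~2 are fine and match what the paper takes from \cite{WY}. The condition $0<A<A_\ast$ is exactly $\inf_{r\ge 0}g(r)=c_{\ast\ast}>0$ (Lemma~\ref{lem;esha0}). Proposition~\ref{prop;lrlpest} comes from $\xi$-derivative bounds dominated by $\mathrm{e}^{-ct|\xi|^2}$, combined with a Sobolev-in-$\xi$ bound for the $L^1$ norm of the kernel, as in the proof of Lemma~\ref{lem;keyK}.

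The gap is in Step~3. Your time integral equals $Ct^{\frac12-\frac{n}{2p}}$, and the weighted $L^{q_0}$ component gives the same power. A nonpositive exponent does not make $\sup_{t>0}$ finite. For $n/2<p<n$ the exponent is negative, so the bound blows up as $t\to 0$; the trouble is at short times, not large times as your last remark suggests. Re-choosing H\"older exponents cannot fix this. As long as $\nabla\mathcal{B}*$ (the paper's $\nabla K*$) is controlled only through $\|\nabla\mathcal{B}\|_1$, every splitting gives the same total exponent $\frac12-\frac{n}{2p}$. In effect you are treating a Burgers-type term $\nabla\cdot(v\,Tv)$ with $T$ bounded, for which $L^n$ is critical and $L^p$, $p<n$, is supercritical at small scales.

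The missing ingredient is the smoothing of $\nabla\mathcal{B}*$: $\nabla\mathcal{B}\in L^\kappa$ for every $\kappa<n/(n-1)$ (Lemma~\ref{lem;bessel}). This is where $p>n/2$ is genuinely needed. There are two ways to repair the estimate.
\begin{enumerate}
\item Add the short-time bounds $\|v\,\nabla\mathcal{B}*v\|_\theta\le\|\nabla\mathcal{B}\|_{q_0/(q_0-1)}\|v\|_\theta\|v\|_{q_0}$ for $\theta\in\{p,q_0\}$, which are legitimate because $q_0>n$. With \eqref{est;nlrlp} at equal exponents they give $Ct^{\frac12-\frac n2(\frac1p-\frac1{q_0})}\|v\|_X^2$ in both components. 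This exponent is positive since $q_0\le 2p$ and $p>n/2$, so taking the minimum with your bound closes the estimate.
\item Use $\|\nabla\mathcal{B}*v\|_r\le\|\nabla\mathcal{B}\|_{q_1}\|v\|_{q_0}$ with $q_1$ as in \eqref{exp;r1} and $\frac1r=\frac1{q_0}+\frac1n-\frac1p$. This makes the total exponent exactly $0$, as in \eqref{decay;Ir}--\eqref{est;nt0}.
\end{enumerate}
The short-time bound is also what gives $v(t)\to v_0$ in $L^p$. Your scale-invariant bounds only show the Duhamel term stays bounded as $t\to0$, even when $p=n$.

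For $n=1$ there is a separate problem. Your fallback $\|\nabla\mathcal{B}*v\|_\infty\le C\|v\|_1$ is only a local estimate: in $L^1$ it leads to $\int_0^t(t-s)^{-1/2}\,\mathrm{d}s\sim t^{1/2}$. For $1<q_0<2$ your main scheme is also unavailable, because $q_0/2<1$. In fact no H\"older--Young splitting closes in $X$ in that range. Two factors measured in $L^a$ and $L^b$ with $a,b\le q_0<2$ decay at best like $s^{-(1-1/q_0)}$, which is slower than the $s^{-1/2}$ needed against $(t-s)^{-1/2}$. Instead, run the fixed point with $q_0=2=2p$, where your scheme works via $\|v\,\nabla\mathcal{B}*v\|_1\le\|\nabla\mathcal{B}\|_1\|v\|_2^2$. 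Then recover $1<q_0<2$ by interpolating between $L^1$ and $L^2$. This is consistent with Proposition~\ref{prop;qgs}, which places the solution in $C((0,\infty);L^{2p}(\mathbb{R}^n))$.
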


According to Proposition \ref{p1}, the threshold $A_*$ defined by \eqref{const;as} determines the stability range of constant steady states.
The purpose of this paper is to investigate the detailed large-time behavior of solutions to problem \eqref{P} near a stable constant steady state $u \equiv A$.
The stability result in \cite{WY} provides global solutions and decay estimates in a restricted range of Lebesgue exponents, 
but it does not identify the precise leading asymptotic profile.
The first aim of the present paper is to show that the nonlinear perturbation is asymptotic to the solution of the linearized problem.
The second aim is to identify the leading term of the linearized semigroup.
The key observation is the low-frequency expansion of the Fourier symbol of the linearized operator,
which reveals an effective heat flow.
Combining these two steps yields the asymptotic profiles stated in Theorem \ref{t3}.

\begin{thm}\label{t1}
Let $p$, $A$ and $v_0$ satisfy the assumptions of Proposition {\rm \ref{p1}}, and 
let $u$ be the corresponding solution of problem \eqref{P} with the initial data $u_0 \coloneqq A+v_0$, 
satisfying \eqref{regularity;uA} and \eqref{decay;uA}. 
Then, for every $p \le q \le \infty$, there exist $C_1, C_2 > 0$ independent of $v_0$ such that 
\begin{equation}\label{decay;ua}
	\sup_{t>0}t^{\frac{n}2(\frac1p-\frac1q)}
	\|
		u(t)-A
	\|_q
	\le 
	C_1
	\|
		u_0-A
	\|_p
	,
\end{equation}
\begin{equation}\label{asy;ua}
	\sup_{t>0}
	t^{\frac{n}2(\frac1p-\frac1q)+\frac{n}{2}(\frac1p-\frac1n)}
		\|
			u(t)-A-\mathcal{U}(t)
		\|_q
	\le 
	C_2
	\|
		u_0-A
	\|_p
	.
\end{equation}
Here $\mathcal{U}(t,x)$ denotes the solution of the following linearized problem on $(0,\infty) \times \mathbb{R}^n$: 
\begin{equation}\label{eq;calU}
	\left\{
	\begin{aligned}
	&\partial_t\mathcal{U}=\Delta \mathcal{U}-A\Delta(\beta_1B_{\lambda_1}-\beta_2B_{\lambda_2})*\mathcal{U}, 
	&&\qquad t>0, \ x\in \mathbb{R}^n,\\
	&\mathcal{U}(0,x)
	=
	u_0(x)-A, 
	&&\qquad x \in \mathbb{R}^n, 
	\end{aligned}
	\right.
\end{equation} 
where the symbol $\ast$ stands for the convolution  with respect to $x\in \mathbb{R}^n$.
\end{thm}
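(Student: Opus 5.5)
Writing $v=u-A$, the perturbation satisfies the Duhamel formula
\[
	v(t)=e^{tL}v_0-\int_0^t e^{(t-s)L}\nabla\cdot\bigl(v(s)\nabla K*v(s)\bigr)\,\mathrm{d}s,\qquad K\coloneqq\beta_1B_{\lambda_1}-\beta_2B_{\lambda_2}.
\]
Here $L=\Delta-A\Delta K*$ is the linearized operator, so that $e^{tL}v_0=\mathcal{U}(t)$. The plan is to derive linear $L^r$--$L^q$ estimates for $e^{tL}$, and then to bootstrap from \eqref{decay;uA} to obtain \eqref{decay;ua} and \eqref{asy;ua}.

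\emph{Step 1 (linear estimates).} The Fourier symbol of $L$ is
\[
	-m(\xi)=-|\xi|^2\Bigl(1-\frac{A\beta_1}{\lambda_1+|\xi|^2}+\frac{A\beta_2}{\lambda_2+|\xi|^2}\Bigr).
\]
The condition $0<A<A_\ast$ is exactly what ensures $m(\xi)\ge c|\xi|^2$. Since the bracket equals $1$ plus a smooth function decaying like $|\xi|^{-2}$, we can write
\[
	e^{tL}=e^{t\Delta}\,\mathrm{e}^{-t|\xi|^2 g(\xi)},\qquad g(\xi)=-\frac{A\beta_1}{\lambda_1+|\xi|^2}+\frac{A\beta_2}{\lambda_2+|\xi|^2}.
\]
This is the step I expect to be the main obstacle. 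I would split into low and high frequencies. For low frequencies, $m$ is comparable to $c_0|\xi|^2$ with a smooth symbol, so Mikhlin/Hörmander-type bounds on $\mathrm{e}^{-tm(\xi)}\chi(\xi)$, after scaling $\xi\to\xi/\sqrt t$, give a kernel bounded in $L^1$ and heat-like kernel estimates. For high frequencies, $m(\xi)=|\xi|^2+O(1)$, and I would treat the difference $e^{tL}-e^{t\Delta}e^{tA(\beta_1/\lambda_1-\ldots)}$ perturbatively, or more simply use Duhamel with the bounded operator $A\Delta K*$ (it has $L^1$ kernel modulo identity, since $\Delta B_\lambda=\lambda B_\lambda-\delta$) together with exponential decay from the spectral gap at high frequency. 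The goal is
\[
	\|\nabla^k e^{tL}f\|_q\le Ct^{-\frac n2(\frac1r-\frac1q)-\frac k2}\|f\|_r,\qquad 1\le r\le q\le\infty,\ k=0,1.
\]
Presumably the paper proved such estimates in \cite{WY}, possibly for a restricted range of exponents, which I would extend in the same way.

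\emph{Step 2 (decay \eqref{decay;ua}).} The nonlinear term is bounded using $\|\nabla K*v\|_\infty\lesssim\|v\|_{q_0}$ (valid because $\nabla B_\lambda\in L^{r}$ for $r<n/(n-1)$ and $q_0>n$) and, more generally, $\|\nabla K*v\|_r\lesssim\|v\|_r$. I would then estimate in $L^q$ for a ladder of exponents $q_0<q\le\infty$. First,
\[
	\|e^{(t-s)L}\nabla\cdot(v\nabla K*v)\|_q\lesssim(t-s)^{-\frac12-\frac n2(\frac1{r}-\frac1q)}\|v\|_{q_0}\|v\|_{r}.
\]
Splitting the time integral into $[0,t/2]$ and $[t/2,t]$, using $L^p$ or $L^{q_0}$ on the early part and already-known higher norms on the late part, and iterating finitely many times (each step raising $q$ by a bounded factor), we reach $q=\infty$. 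The case $p\le q\le q_0$ follows by interpolation. The time singularities are integrable precisely because $q_0>n$. Smallness of $\|v_0\|_p$ keeps the constants linear in $\|v_0\|_p$.

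\emph{Step 3 (asymptotics \eqref{asy;ua}).} Now $v-\mathcal{U}$ is just the Duhamel integral. On $[0,t/2]$ I would use the linear estimate from $L^{r}$ with
\[
	\|v\nabla K*v\|_{r}\le\|v\|_{p}\|\nabla K*v\|_{q_0}\quad\text{or}\quad \|v\|_{p_1}\|\nabla K*v\|_{p_2},
\]
choosing exponents via Hardy--Littlewood--Sobolev so that the total decay rate is $\frac n2(\frac1p-\frac1q)+\frac n2(\frac1p-\frac1n)$. Heuristically, the quadratic term gains $t^{-\frac n2\cdot\frac1p}$ from the extra factor $v$ and loses $t^{\frac12}$ from the derivative, which gives exactly $\frac n2(\frac1p-\frac1n)$. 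On $[t/2,t]$ I would use the $L^\infty$ and $L^q$ decay of Step 2. The integrals $\int_0^{t/2}s^{-\frac n{2}(\ldots)}\,\mathrm{d}s$ must converge or scale correctly; at $p=n$ the extra rate is $0$ and the bound reduces to Step 2, so there is no log issue. For $p<n$ (and $p=1$, $n=1$, where the rate is $0$) I would carefully select the intermediate exponents, which is the delicate bookkeeping point.
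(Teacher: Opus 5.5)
Your overall architecture is the paper's. The linear bounds of Step~1 are not reproved there; they are quoted on the full range $1\le r\le q\le\infty$ (Proposition~\ref{prop;lrlpest}). Your Step~3 is exactly the paper's argument once sharp decay is available: on $[0,t/2]$ use $\|v\nabla K*v\|_p\le\|v\|_p\|\nabla K\|_1\|v\|_\infty$ and $\int_0^{t/2}s^{-\frac n{2p}}\,\mathrm{d}s<\infty$, and on $[t/2,t]$ use $\|v\|_q\|\nabla K\|_1\|v\|_\infty$. No Hardy--Littlewood--Sobolev is needed.

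The gap is in Step~2, on which Step~3 relies. The estimate you build it on, $\|\nabla K*v\|_\infty\lesssim\|v\|_{q_0}$, loses decay in a way that no splitting or iteration recovers. Insert $\|v\nabla K*v\|_r\le C\|v\|_r\|v\|_{q_0}$ and the known rates into the Duhamel integral. Every piece ($[0,t/2]$ or $[t/2,t]$, any admissible $r$) is then homogeneous in $t$ and comes out as $t^{-\frac n2(\frac1p-\frac1q)+\delta}$, where
\[
\delta=\frac n2\Bigl(\frac1n-\frac1p+\frac1{q_0}\Bigr)\ge\frac n2\Bigl(\frac1n-\frac1{2p}\Bigr)>0,
\]
since $q_0\le 2p$ and $p>n/2$. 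For instance, $n=3$, $p=2$, $q_0=4$ gives $\|v(t)\|_\infty\lesssim t^{-5/8}$ instead of $t^{-3/4}$. Feeding this lossy bound into later steps only propagates the loss. Replacing $q_0$ by a larger exponent $Q$ gives excess $\frac n2(\frac1n-\frac1p+\frac1Q)$, which is still positive when $p=n$ or $Q<\frac{np}{n-p}$, and a sharp $L^Q$ bound is what you are trying to prove. So the obstacle is not integrability of the singularities, which you single out, but matching the exponent exactly.

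The missing idea is to choose the Young exponent of $\nabla K$ so that the estimate is scale-consistent with the $L^p$ rate. Take $\nabla K\in L^{q_1}$ with $1-\frac1{q_1}=\frac1p-\frac1n$, i.e.\ $q_1=\frac{np}{(n+1)p-n}$. This is admissible by Lemma~\ref{lem;bessel} precisely because $p>n/2$, and $q_1=1$ when $p=n$. With it the paper routes the nonlinearity through $L^n$:
\[
\|v\nabla K*v\|_n\le\|v\|_{2p}\|\nabla K\|_{q_1}\|v\|_{2p}\lesssim s^{-\frac n{2p}}\|v_0\|_p^2 .
\]
Then $\int_0^t(t-s)^{-1+\frac n{2q}}s^{-\frac n{2p}}\,\mathrm{d}s=Ct^{-\frac n2(\frac1p-\frac1q)}$ gives the exact rate for all $t>0$ and all $n<q<\infty$ at once, with no splitting.

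The $L^\infty$ endpoint is again an exact-power convolution. For $p<n$ it uses $\|v\|_{2p}\|\nabla K\|_{q_1}\|v\|_{np/(n-p)}$ in $L^{2p}$. For $p=n$ it uses $\|v\|_{4n}\|\nabla K\|_1\|v\|_{4n}$ in $L^{2n}$, and for $n=1$ it goes $L^2\to L^4\to L^\infty$ with $\|\nabla K\|_1$.

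Your other tool, $\|\nabla K*v\|_r\lesssim\|v\|_r$ (i.e.\ $\nabla K\in L^1$), has excess $-\frac n2(\frac1p-\frac1n)\le 0$, so it is exact when $p=n$. For $p<n$, however, it is too weak for $t<1$. It can also be non-integrable at $s=0$ when aiming at large $q$ with $p$ near $n/2$. It would therefore need a separate short-time argument, which your two-piece splitting does not supply.
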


\begin{rem}
It follows from \cite[Chapter I\hspace{-.2em}I\hspace{-.2em}I, Theorem 2.10]{EN2000} that 
the closure of the operator 
$
	\mathcal{L}_A
	\coloneqq 
	\Delta
	-
	A
	\Delta
	(
		\beta_1
		B_{\lambda_1}
		-
		\beta_2
		B_{\lambda_2}
	)*
$ 
generates an analytic semigroup on 
$L^q(\mathbb{R}^n)$ for every $1 \le q < \infty$. 
Moreover, this analytic semigroup admits the following Fourier multiplier representation: 
\begin{equation}\label{eq;semila}
	\mathrm{e}^{t\mathcal{L}_A}
	f
	\coloneqq 
	\mathcal{F}^{-1}
	\big[ 
		\mathrm{e}^{th_A}\mathcal{F}[f] 
	\big]
	=
	(2\pi)^{-\frac{n}2}
	\mathcal{F}^{-1}
	[ 
		\mathrm{e}^{th_A}
	]
	*
	f
	, \quad t>0, 
\end{equation}
where $\mathcal{F}^{-1}$ denotes the inverse Fourier transform and 
$h_A(\xi)$ is the function defined by 
\begin{equation}
\label{fn;ha}
	h_A(\xi)
	\coloneqq 
	-|\xi|^2
	+
	\beta_1A
	\frac{|\xi|^2}{\lambda_1+|\xi|^2}
	-
	\beta_2A
	\frac{|\xi|^2}{\lambda_2+|\xi|^2}. 
\end{equation}
Consequently, the solution $\mathcal{U}(t,x)$ of \eqref{eq;calU} is given by
\[
	\mathcal{U}(t,x)
	=
	\mathrm{e}^{t\mathcal{L}_A}(u_0-A)(x).
\]
For $f \in L^\infty(\mathbb{R}^n)$, $\mathrm{e}^{t\mathcal{L}_A}f$ denotes the corresponding convolution operator defined by its $L^1$-kernel.
\end{rem}

Theorem \ref{t1} shows that $u(t)-A$ behaves like $\mathcal{U}(t)$ introduced  by \eqref{eq;calU} in the case $n/2<p<n$ as $t \to \infty$ since 
\[
	\sup_{t>0}
	t^{\frac{n}2(\frac1p-\frac1q)}
	\|
		\mathcal{U}(t)
	\|_q
	\le 
	C
	\|
		u_0-A
	\|_p
\] 
by \eqref{est;lrlp} below.
The next theorem identifies the leading term of the linearized semigroup $\mathrm{e}^{t\mathcal{L}_A}$.

\begin{thm}\label{t2}
Let $1 \le p \le q \le \infty$ and $0<A<A_\ast$. 
Then there exists $C = C(n, \beta_1, \beta_2, \lambda_1, \lambda_2, A, p, q)>0$ such that 
\begin{align}
	\label{asy;la}
	&
	\|
		\mathrm{e}^{t\mathcal{L}_A}f
		-
		\mathrm{e}^{c_{\ast} t\Delta}f
	\|_q
	\le 
	C
	t^{-\frac{n}2(\frac1p-\frac1q)}
	(1+t)^{-1}
	\|
		f
	\|_p 
\end{align}
holds for all $t>0$ and $f\in L^p(\mathbb{R}^n)$,
where
\begin{equation}\label{const;ca}
	c_{\ast}
	\coloneqq 
	1
	-
	\frac{\beta_1\lambda_2-\beta_2\lambda_1}{\lambda_1\lambda_2}A>0
\end{equation}
and  $\mathrm{e}^{c_{\ast} t\Delta}$ is the heat semigroup.
\end{thm}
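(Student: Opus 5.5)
The plan is to work entirely on the Fourier side. We write
\[
\mathrm{e}^{t\mathcal{L}_A}f-\mathrm{e}^{c_\ast t\Delta}f=(2\pi)^{-\frac n2}K_t*f,\qquad K_t\coloneqq\mathcal{F}^{-1}\big[\mathrm{e}^{th_A}-\mathrm{e}^{-c_\ast t|\xi|^2}\big],
\]
and reduce \eqref{asy;la} to kernel estimates. By Young's inequality with $1+\frac1q=\frac1r+\frac1p$, it suffices to prove
\[
\|K_t\|_r\le Ct^{-\frac n2(1-\frac1r)}(1+t)^{-1}\quad\text{for all } 1\le r\le\infty.
\]
By Hölder interpolation between $L^1$ and $L^\infty$, only the endpoints $r=1$ and $r=\infty$ are needed.

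\textbf{Step 1: the symbol.} First record the structure of $h_A$. For small $|\xi|$, expanding $\frac{|\xi|^2}{\lambda_j+|\xi|^2}=\frac{|\xi|^2}{\lambda_j}-\frac{|\xi|^4}{\lambda_j^2}+\dots$ gives
\[
h_A(\xi)=-c_\ast|\xi|^2+g(\xi),\qquad |\partial^\alpha g(\xi)|\le C|\xi|^{4-|\alpha|}\ \text{ for } |\xi|\le 1.
\]
The hypothesis $0<A<A_\ast$ is used to get $h_A(\xi)\le -c|\xi|^2/(1+|\xi|^2)\cdot(1+|\xi|^2)$, that is,
\[
h_A(\xi)\le -c\min(|\xi|^2,1)\ \text{ for all } \xi, \qquad h_A(\xi)=-|\xi|^2+O(1)\ \text{ for large } |\xi|.
\]
This is exactly the stability condition from \cite{WY}, and it should be re-derivable as positivity of a quadratic in $|\xi|^2$ after clearing denominators. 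It also gives $c_\ast>0$.

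\textbf{Step 2: splitting the kernel.} Fix a smooth cutoff $\chi$ equal to $1$ near $0$ and split $K_t=K_t^{\rm low}+K_t^{\rm high}$.

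For the high part, write
\[
\mathrm{e}^{th_A}=\mathrm{e}^{-t|\xi|^2}\,\mathrm{e}^{tm(\xi)},\qquad m(\xi)\coloneqq \beta_1A\frac{|\xi|^2}{\lambda_1+|\xi|^2}-\beta_2A\frac{|\xi|^2}{\lambda_2+|\xi|^2},
\]
where $m$ is a bounded smooth symbol of order $0$. Then $(1-\chi)\mathrm{e}^{th_A}$ satisfies Mikhlin/Hörmander-type derivative bounds of the form $\mathrm{e}^{-ct}\mathrm{e}^{-ct|\xi|^2}$ times polynomial factors. Its kernel in $L^1$ is therefore $O(\mathrm{e}^{-ct})$ for $t\ge1$, and the same holds for the heat part.

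For $t\le1$ we must only show $\|K_t\|_1\le C$ and $\|K_t\|_\infty\le Ct^{-n/2}$, since the factor $(1+t)^{-1}$ is harmless there. One way is to write $\mathrm{e}^{t\mathcal L_A}=\mathrm{e}^{t\Delta}\mathrm{e}^{tM}$, where $M$ is convolution with a finite measure (an $L^1$ kernel plus a multiple of $\delta$). Indeed $\frac{|\xi|^2}{\lambda+|\xi|^2}=1-\frac{\lambda}{\lambda+|\xi|^2}$ corresponds to $\delta-\lambda B_\lambda$, with $B_\lambda\in L^1$. Hence $\mathrm{e}^{tM}$ is bounded on every $L^p$ uniformly for $t\le1$, and the same holds for $\mathrm{e}^{c_\ast t\Delta}$. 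This treats small $t$ without any splitting.

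For the low part with $t\ge1$, write
\[
\mathrm{e}^{th_A}-\mathrm{e}^{-c_\ast t|\xi|^2}=\mathrm{e}^{-c_\ast t|\xi|^2}\big(\mathrm{e}^{tg}-1\big),\qquad |\mathrm{e}^{tg}-1|\le Ct|\xi|^4\mathrm{e}^{t|g|}.
\]
Near the origin $t|g|\le \frac{c_\ast}{2}t|\xi|^2$, which keeps Gaussian decay. The $L^\infty$ bound follows by integrating:
\[
\int t|\xi|^4\mathrm{e}^{-ct|\xi|^2}\,d\xi\sim t^{-n/2-1}.
\]

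\textbf{Main obstacle: the $L^1$ bound for the low part.} We need $\|K^{\rm low}_t\|_1\le Ct^{-1}$ for $t\ge1$. Rescale $\xi=\eta/\sqrt t$; then the multiplier becomes
\[
\Phi_t(\eta)=\chi(\eta/\sqrt t)\,\mathrm{e}^{-c_\ast|\eta|^2}\big(\mathrm{e}^{tg(\eta/\sqrt t)}-1\big),
\]
and $\|K_t^{\rm low}\|_1=\|\mathcal F^{-1}\Phi_t\|_1$. The plan is to use the standard estimate
\[
\|\mathcal F^{-1}\Phi\|_1\le C\,\|\Phi\|_{2}^{1-\frac{n}{2k}}\,\max_{|\alpha|=k}\|\partial^\alpha\Phi\|_{2}^{\frac{n}{2k}},\qquad k>n/2.
\]
Using $|\partial^\alpha g(\xi)|\le C|\xi|^{4-|\alpha|}$, each derivative satisfies
\[
|\partial_\eta^\alpha\Phi_t|\le Ct^{-1}(1+|\eta|)^{N}\mathrm{e}^{-c|\eta|^2}.
\]
The key point is that $t\,g(\eta/\sqrt t)=t^{-1}O(|\eta|^4)$, with derivatives controlled in the same way; the cutoff derivatives also give only decaying contributions. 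This yields the desired $t^{-1}$ factor, and Young's inequality then concludes \eqref{asy;la}.
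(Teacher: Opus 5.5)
Your proof is correct, and its core is the same as the paper's. In both, the multiplier $\mathrm{e}^{-c_\ast t|\xi|^2}(\mathrm{e}^{tH_A}-1)$ is bounded by $Ct|\xi|^4$ times a Gaussian, and integrating this gives the $L^\infty$ bound. The $L^1$ bound for $t\ge1$ comes from the $L^2$-interpolation (Carlson--Beurling) inequality of \cite[Lemma 3.4]{WY} with $k>n/2$ derivatives, and $L^1$--$L^\infty$ interpolation plus Young's inequality finish.

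The organization differs in two places.

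\emph{No cutoff in the paper.} The paper uses neither a frequency cutoff nor a rescaling. The global bound $h_A(\xi)\le -c_{\ast\ast}|\xi|^2$ from \cite{WY}, together with $c_{\ast\ast}\le c_\ast$, gives a single Gaussian-weighted pointwise estimate (Lemma~\ref{lem;esha1}) and a single family of derivative estimates (Lemma~\ref{lem;key}), valid for all $\xi$. Their weighted $L^2$ norms are then computed directly with Lemma~\ref{lem;ede}. Your low/high split instead separates the two ways the stability hypothesis enters: $c_\ast>0$ at the origin, and strict negativity of $h_A$ away from it, which makes the high part $O(\mathrm{e}^{-ct})$. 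Your rescaling $\eta=\sqrt t\,\xi$ makes the gain transparent through $t\,g(\eta/\sqrt t)=O(t^{-1}|\eta|^4)$. The price is cutoff bookkeeping. Also, the high-frequency $L^1$ bound really comes from Gaussian-weighted derivative bounds fed into the same interpolation inequality, not from Mikhlin theory.

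\emph{Small times.} For $0<t\le1$ the paper combines the cited estimate \eqref{est;lrlp} with the heat estimate via the triangle inequality. Your factorization $\mathrm{e}^{t\mathcal{L}_A}=\mathrm{e}^{t\Delta}\mathrm{e}^{tM}$, with $M$ convolution by a finite measure, is self-contained. It also shows that the small-time bound needs no stability assumption at all.

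One minor fix. The bound $|\partial^\alpha g(\xi)|\le C|\xi|^{4-|\alpha|}$ you wrote does not yield your stated pointwise bound on $\partial_\eta^\alpha\Phi_t$ near $\eta=0$ once $k>4$. You have two ways out:
\begin{itemize}
\item use that $g(\xi)=|\xi|^4\phi(|\xi|^2)$ with $\phi$ smooth, so $|\partial^\alpha g(\xi)|\le C|\xi|^{\max\{4-|\alpha|,0\}}$ near $0$; or
\item as the paper does in Lemma~\ref{lem;HAest} and the proof of Lemma~\ref{lem;keyK}, keep the singular power and note that it is still square integrable because $k<4+n/2$.
\end{itemize}
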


Combining Theorems \ref{t1} and \ref{t2} yields the following refinement of \eqref{asy;ua}.
More precisely, Theorem \ref{t1} extends the decay estimates to the full range of Lebesgue exponents, 
whereas Theorem \ref{t2} shows that the difference between the linearized evolution and the effective heat flow has an additional decay factor of order $(1+t)^{-1}$.

\begin{thm}\label{t3}
Let  $p$, $A$ and $v_0$ satisfy the assumptions of Proposition \ref{p1}.
If 
the solution $u$ to problem \eqref{P} with the initial data $u_0\coloneqq A+v_0$ satisfies \eqref{regularity;uA} and \eqref{decay;uA}, 
then the following two assertions hold:   
\begin{enumerate}
\item If $n\ge 2$ and $n/2<p<n$, then
\begin{equation}\label{asy;ua2}
\begin{split}
	t^{\frac{n}2(\frac1p-\frac1q)}
	\|
		u(t)-A
		-
		\mathrm{e}^{c_{\ast} t\Delta}(u_0-A)
	\|_q 
	\le 
	C
	\big(
		t^{-1}
		+
		t^{-\frac{n}2(\frac1p-\frac1n)}
	\big)
	\|
		u_0-A
	\|_p
\end{split}
\end{equation}
for all $t>0$ and $p \le q \le \infty$,
where $C>0$ depends only on $n,\beta_1, \beta_2, \lambda_1, \lambda_2, p, q$ and $A$.
\item 
Let $\varepsilon_* > 0$ be the same constant as in Proposition \ref{p1}.
For every $n \le q \le \infty$,
there exists 
$
	\varepsilon_0 
	=
	\varepsilon_0(q)
	\in 
	( 
		0, \min \{ 1, \varepsilon_* \}
	]
$ 
such that,
if 
$n \ge 1$, 
$p=n$
and 
$
	\|
		u_0-A
	\|_n
	<
	\varepsilon_0
$, then 
\begin{equation}\label{asy;ua1}
	\lim_{t\to\infty}
	t^{\frac{n}2(\frac1n-\frac1q)}
	\|
		u(t)-A
		-
		\mathrm{e}^{c_{\ast} t\Delta}(u_0-A)
	\|_q
	=
	0,
\end{equation}
where the constant $c_{\ast}>0$ is defined by \eqref{const;ca}, and $\mathrm{e}^{c_\ast t\Delta}$ is the heat semigroup.
\end{enumerate}

\end{thm}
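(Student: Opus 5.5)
Both assertions follow from the splitting
\[
	u(t)-A-\mathrm{e}^{c_\ast t\Delta}(u_0-A)
	=
	\big(u(t)-A-\mathcal{U}(t)\big)
	+
	\big(\mathrm{e}^{t\mathcal{L}_A}-\mathrm{e}^{c_\ast t\Delta}\big)(u_0-A),
\]
where $\mathcal{U}(t)=\mathrm{e}^{t\mathcal{L}_A}(u_0-A)$. For assertion (1), the first term is controlled by \eqref{asy;ua} of Theorem \ref{t1}. Multiplying it by $t^{\frac n2(\frac1p-\frac1q)}$ gives the bound $C t^{-\frac n2(\frac1p-\frac1n)}\|u_0-A\|_p$. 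The second term is controlled by \eqref{asy;la} of Theorem \ref{t2} with $f=u_0-A$. Since $(1+t)^{-1}\le t^{-1}$, this gives $C t^{-1}\|u_0-A\|_p$ after the same weighting. Adding the two bounds yields \eqref{asy;ua2} directly.

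For assertion (2), with $p=n$, the rate in \eqref{asy;ua} degenerates: the exponent $\frac n2(\frac1p-\frac1n)$ vanishes. We therefore need smallness as $t\to\infty$ rather than a rate, and I would use a density argument. The second term is still fine, since \eqref{asy;la} gives $t^{\frac n2(\frac1n-\frac1q)}\|(\mathrm{e}^{t\mathcal{L}_A}-\mathrm{e}^{c_\ast t\Delta})(u_0-A)\|_q\le C(1+t)^{-1}\|u_0-A\|_n\to0$. The work is to show
\[
	t^{\frac n2(\frac1n-\frac1q)}\|u(t)-A-\mathcal{U}(t)\|_q\to0.
\]
Write $v=u-A$ and use Duhamel's formula for the perturbation,
\[
	v(t)-\mathcal{U}(t)=-\int_0^t \mathrm{e}^{(t-s)\mathcal{L}_A}\nabla\cdot\big(v\,\nabla(\beta_1B_{\lambda_1}-\beta_2B_{\lambda_2})*v\big)(s)\,\mathrm{d}s.
\]
Here the linear term $A\nabla\cdot\nabla K*v$ has been absorbed into $\mathcal{L}_A$. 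The point is to show that the quadratic term is $o(t^{-\frac n2(\frac1n-\frac1q)})$. Define
\[
	M(t)=\sup_{s\ge t/2}\, s^{\frac n2(\frac1n-\frac1r)}\|v(s)\|_r
\]
for a fixed $r\in(n,2n]$. First I would show $M(t)\to0$: approximate $v_0$ in $L^n$ by $w_0\in L^{p_1}\cap L^n$ with some $p_1<n$ (in the range $n/2<p_1$ for $n\ge2$; the $n=1$ case uses $L^1$ directly, since then $p=n=1$ and the same scheme applies with weighted norms). For such data, Theorem \ref{t1} with $p_1$ gives faster decay. Then a Lipschitz-type stability estimate in the Proposition \ref{p1} norm, obtained from the same fixed-point contraction (this is where the smallness $\varepsilon_0$ is used), controls the difference between the solutions. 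This gives $\limsup_{t\to\infty} M(t)\le C\|v_0-w_0\|_n$, which is arbitrarily small.

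Then I would split the Duhamel integral at $t/2$. On $(t/2,t)$ the bilinear estimates used for \eqref{asy;ua} give a bound $\le C M(t)^2$ times the right power of $t$. On $(0,t/2)$, the semigroup smoothing $\|\mathrm{e}^{\tau\mathcal{L}_A}\nabla\cdot g\|_q\le C\tau^{-\frac n2(\frac1\rho-\frac1q)-\frac12}\|g\|_\rho$ can be used. Here $\nabla K$ is in $L^1$, so by Young's inequality $\|\nabla K*v\|\lesssim\|v\|$. The resulting integral near $s=0$ is only conditionally small. To handle it, I would again split the data into a good part, which has extra decay, plus a part that is small in $L^n$ and controlled by \eqref{asy;ua} with $p=n$, whose bound is proportional to $\|u_0-A\|_n$ squared. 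The main obstacle is this continuous dependence and density step at the critical exponent, because the estimates are scale invariant and give no decay for free. Finally, the restriction $q\ge n$ and the $q$-dependent $\varepsilon_0(q)$ come from choosing exponents for which the Hölder and Young pairings remain admissible.
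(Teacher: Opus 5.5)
Your part (1) is exactly the paper's argument: the triangle inequality through $\mathrm{e}^{t\mathcal{L}_A}v_0$, with \eqref{asy;va} for the nonlinear remainder and \eqref{asy;la} for the linear difference. Part (2), however, has a genuine gap. Your scheme rests on $M(t)\to0$, i.e.\ on the quadratic Duhamel term becoming $o(t^{-\frac n2(\frac1n-\frac1q)})$ merely because $v$ is small in the critical weighted norm at large times. For $n=p=1$ this is false. The mass $\int_{\mathbb{R}}v(t)\,\mathrm{d}x=\int_{\mathbb{R}}v_0\,\mathrm{d}x$ is conserved, and the remark after Theorem~\ref{t3} records that $\|v(t)\|_r\ge c\,t^{-\frac12(1-\frac1r)}$ for large $t$ whenever $\int_{\mathbb{R}}v_0\,\mathrm{d}x\neq0$. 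No approximation in $L^1$, weighted or not, changes the mass, so $M(t)$ stays bounded below. With the crude bilinear bound $\|v\,\nabla K*v\|\le\|v\|\,\|\nabla K\|_1\|v\|$, both halves of the Duhamel integral are then only $O(t^{-\frac12(1-\frac1q)})$. That is exactly borderline, never $o(\cdot)$.

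The missing idea is the derivative gain the paper exploits. Since $K\in L^1$ (Lemma~\ref{lem;bessel} with $\alpha=0$), one has $\nabla K*w=K*\nabla w$. For $w=\mathrm{e}^{c_\ast s\Delta}v_0$ this gives $\|w\,K*\nabla w\|_n\le Cs^{-1}\|v_0\|_n^2$ instead of $Cs^{-1/2}\|v_0\|_n^2$. Such gradient bounds are available for the heat flow but not for $v$, so the paper expands around $\mathrm{e}^{c_\ast t\Delta}v_0$ rather than around $\mathcal{U}$: $v-\mathrm{e}^{c_\ast t\Delta}v_0=V_1+V_2+V_3+V_4$. Here $V_2$, the nonlinearity evaluated on the heat flow, acquires the extra factor $(1+t)^{-\sigma}$ from this structure. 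The terms $V_3,V_4$ are linear in $v-\mathrm{e}^{c_\ast s\Delta}v_0$ and are absorbed through $F_q$ using $C_2\varepsilon_0\le\frac12$. This, not exponent admissibility, is where $\varepsilon_0(q)$ comes from. The argument is uniform in $n\ge1$ and even yields the rate $(1+t)^{-\sigma}$ for $0<\sigma<\frac14$.

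For $n\ge2$ your route could be rescued, but not as written. Theorem~\ref{t1} with $p_1<n$ needs $\|w_0\|_{p_1}<\varepsilon_\ast$, which cannot be arranged for $L^n$-approximants of a small but widely spread $v_0\in L^n$. Also, splitting the data does not split a quadratic Duhamel term. One would instead:
\begin{enumerate}
\item use density only at the linear level, where $t^{\frac n2(\frac1n-\frac1r)}\|\mathrm{e}^{t\mathcal{L}_A}v_0\|_r\to0$ holds because $n>1$;
\item split the time integral at $\delta t$;
\item absorb in a $\limsup$ by smallness.
\end{enumerate}
That gives \eqref{asy;ua1} for $n\ge2$ without a rate, but it still cannot reach $n=1$.
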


\begin{rem}
It is well known that if $f\in L^1(\mathbb{R}^n)$,
then, for all $1 \le q \le \infty$, 
\[
	\lim_{t \to \infty}
	t^{\frac{n}2(1-\frac1q)}
	\left\|
		\mathrm{e}^{c_\ast t\Delta}f
		-
		G(c_{\ast} t)
		\int_{\mathbb{R}^n}
			f
		\, \mathrm{d}x
	\right\|_q
	=
	0,
\]
where $G(t,x) \coloneqq (4\pi t)^{-n/2}\mathrm{e}^{-|x|^2/(4t)}$;
see, \textit{e.g.}, \cite{G2010}. 
Hence, if $p = n = 1$,
then, for every fixed $1 \le q \le \infty$ and every perturbation satisfying
$
	\|
		u_0
		-
		A
	\|_1
	<
	\varepsilon_0
$, 
where $\varepsilon_0 = \varepsilon_0(q) > 0$ is the threshold in Theorem \ref{t3} (\textrm{ii}),
the decay estimate in \eqref{asy;ua1} can be modified as follows:
\begin{equation}\label{asy;mua1}
	\lim_{t\to\infty}
	t^{\frac12(1-\frac1q)}
	\left\|
		u(t)-A
		-
		G(c_{\ast} t)
		\int_{\mathbb{R}}
			(u_0-A)
		\, \mathrm{d}x
	\right\|_q
	=
	0.
\end{equation}
As a result, we observe from \eqref{decay;ua} and \eqref{asy;mua1} that 
there are constants $c$, $C>0$ such that for all sufficiently large $t$ and $1\le q\le \infty$, 
\[
	c
	t^{-\frac{1}{2}(1-\frac{1}{q})}
	\le 
	\|
		u(t)
		-
		A
	\|_q
	\le 
	C
	t^{-\frac{1}{2}(1-\frac{1}{q})}
\] 
when $n=p=1$ and $\int_{{\mathbb R}^n}(u_0-A)\,{\rm d}x \ne 0$. 
This means that the time decay rate in \eqref{decay;ua} is optimal in this case.
\end{rem}

\begin{rem}
	The conclusions of Theorems~\ref{t1}--\ref{t3} also hold, under the corresponding assumptions, for the following chemotaxis problem with a single chemoattractant:
	\begin{equation}\label{sat}\tag{AT}
	\left\{
		\begin{aligned}
			&\partial_t u = \Delta u - \beta \nabla \cdot ( u \nabla B_{\lambda}*u), 
			&&\qquad t>0, \ x \in \mathbb{R}^n, \\
			&u(0,x)=u_0(x), 
			&&\qquad x \in \mathbb{R}^n,
		\end{aligned}
	\right.
\end{equation}
where $\beta, \lambda > 0$.
In this case, $A_*$ and $c_*$ are to be replaced by $\lambda/\beta$ and $1 - \beta A /\lambda$, respectively.
\end{rem}

We now briefly explain the strategy to prove our theorems.
Fix a constant steady state $u \equiv A$ of problem \eqref{P}.
Setting 
$
	u
	\coloneqq 
	A
	+
	v
$ 
transforms problem \eqref{P} into the following perturbation problem:
\begin{equation}\label{eq;Q}\tag{Q}
	\left\{
	\begin{aligned}
		&\partial_tv-\mathcal{L}_Av=-\nabla \cdot (v\nabla K*v), 
		&& \qquad t>0, \ x\in \mathbb{R}^n, \\
		&v(0,x) = u_0(x)-A \eqqcolon v_0(x), 
		&&\qquad x\in \mathbb{R}^n, 
	\end{aligned}
	\right.
\end{equation}
where $K(x)=\beta_1B_{\lambda_1}(x)-\beta_2B_{\lambda_2}(x)$ and
$
\mathcal{L}_A\coloneqq \Delta - A\Delta K*
$.
The following proposition gives a unique global solution to problem \eqref{eq;Q}: 
\begin{prop}[Theorem~8.3 in~\cite{WY}]\label{prop;qgs}
Suppose that the system parameters, $p$, and $A$ satisfy the assumptions of Proposition \ref{p1}. 
Then there exists $\varepsilon_\ast > 0$ such that 
for all $v_0 \in L^p(\mathbb{R}^n)$ with $\|v_0\|_p<\varepsilon_\ast$, problem \eqref{eq;Q}  
admits a unique global solution $v$ in the sense that 
\begin{align}
	&\label{R;solv} 
	v
	\in 
	C([0,\infty);L^p(\mathbb{R}^n)) \cap C((0,\infty); L^{2p}(\mathbb{R}^n));\\
	&\label{eq;ieqv} 
	v(t)
	=
	\mathrm{e}^{t\mathcal{L}_A}v_0
	-
	\int_0^t
		\nabla \cdot \mathrm{e}^{(t-s)\mathcal{L}_A}
		(
			v
			\nabla 
			K
			*v
		)(s)
	\, \mathrm{d}s, \quad t>0.
\end{align}
Moreover, the following decay properties hold: for all $n<q\le 2p$, 
\begin{equation}\label{dEcay;v0}
	\sup_{t>0}
	\|
		v(t)
	\|_p
	+
	\sup_{t>0}
	t^{\frac{n}2(\frac1p-\frac1q)}
	\|
		v(t)
	\|_q
	\le 
	C
	\|
		v_0
	\|_p, 
\end{equation}
where $C>0$ is independent of $v_0$ and $t$, 
and $\mathrm{e}^{t\mathcal{L}_A}$ is the analytic semigroup 
on $L^q(\mathbb{R}^n)$ $(1\le q<\infty)$ denoted by \eqref{eq;semila}. 
\end{prop}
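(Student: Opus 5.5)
The plan is to prove Proposition \ref{prop;qgs} by a Banach fixed point argument for the integral equation \eqref{eq;ieqv} in a Kato-type weighted space. Fix $q_0$ with $n<q_0\le 2p$ and let $X$ be the set of $v\in C([0,\infty);L^p)\cap C((0,\infty);L^{q_0})$ with
\[
	\|v\|_X\coloneqq \sup_{t>0}\|v(t)\|_p+\sup_{t>0}t^{\frac n2(\frac1p-\frac1{q_0})}\|v(t)\|_{q_0}<\infty .
\]
Define $\Phi[v](t)$ to be the right-hand side of \eqref{eq;ieqv}. I will show that $\Phi$ maps a small ball of $X$ into itself and is a contraction there once $\|v_0\|_p<\varepsilon_\ast$. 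This gives existence, uniqueness in that ball and the estimate \eqref{dEcay;v0} for $q=q_0$. The remaining exponents in $(n,2p]$ are then handled either by rerunning the argument for each fixed $q$, which yields a uniqueness-compatible solution, or by interpolating between $q_0$ and $2p$ after showing $v\in L^{2p}$ in the same way. Interpolation between $p$ and the $L^{2p}$ bound gives all intermediate $q$.

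\emph{Step 1: linear estimates.} From \eqref{fn;ha}, the condition $0<A<A_\ast$ in \eqref{const;as} is exactly what makes $h_A(\xi)<0$ for $\xi\neq0$. Near $\xi=0$ we have $h_A(\xi)=-c_\ast|\xi|^2+O(|\xi|^4)$ with $c_\ast>0$, and $h_A(\xi)\sim-|\xi|^2$ as $|\xi|\to\infty$. Together these give $h_A(\xi)\le -c|\xi|^2$ for all $\xi$. I then write $\mathrm{e}^{th_A}=\mathrm{e}^{-ct|\xi|^2/2}\,m_t(\xi)$, where $m_t$ satisfies uniform Mikhlin-type (or $L^1$-kernel) bounds after parabolic rescaling. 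This should yield
\[
	\|\mathrm{e}^{t\mathcal{L}_A}f\|_q\le Ct^{-\frac n2(\frac1r-\frac1q)}\|f\|_r,\qquad
	\|\nabla\mathrm{e}^{t\mathcal{L}_A}f\|_q\le Ct^{-\frac12-\frac n2(\frac1r-\frac1q)}\|f\|_r
\]
for $1\le r\le q\le\infty$. In addition, $\nabla K*$ has an integrable kernel, since it is a combination of Bessel potentials of order $1$. It is therefore bounded on every $L^r$ and maps $L^{q_0}\to L^\infty$ because $q_0>n$.

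\emph{Step 2: nonlinear estimate.} For $N(v)\coloneqq v\nabla K*v$ I use the bound
\[
	\|N(v)(s)\|_{r}\le\|v(s)\|_{r}\|\nabla K*v(s)\|_\infty\le C\|v(s)\|_r\|v(s)\|_{q_0},\qquad r\in\{p,q_0\}.
\]
The key point is that $\nabla K*$ is harmless at large scales. Consequently the Duhamel term gains the full factor $s^{-\frac n2(\frac1p-\frac1{q_0})}$ from $\|v(s)\|_{q_0}$ without losing any scaling. I split $\int_0^t=\int_0^{t/2}+\int_{t/2}^t$. On $(0,t/2)$ I put $N(v)$ in $L^{q_0/2}$ (note $q_0/2\le p$ and $q_0/2>1$ when $n\ge2$) or in $L^1$ when $n=p=1$. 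On $(t/2,t)$ I put $N(v)$ in $L^{p}$ or $L^{q_0}$. The singularity at $s=t$ is then only $(t-s)^{-1/2}$, and the singularity at $s=0$ is $s^{-\frac n2(\frac1p-\frac1{q_0})}$, which is integrable because $\frac1p-\frac1{q_0}<\frac2n$, i.e.\ $p>n/2$. The resulting time weights are designed to recombine to $t^{-\frac n2(\frac1p-\frac1{q_0})}$ in $L^{q_0}$ and to $O(1)$ in $L^p$, giving $\|\Phi[v]\|_X\le C\|v_0\|_p+C\|v\|_X^2$; the difference estimate follows from the same computation. The restriction $p\le n$ should appear precisely in this recombination on $(0,t/2)$: there the power of $t$ produced by placing $N(v)$ in $L^{q_0/2}$ or $L^1$ must not exceed the target decay rate.

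\emph{Step 3: continuity.} Continuity in time, including $v(t)\to v_0$ in $L^p$ as $t\to0$, follows from strong continuity of the analytic semigroup on $L^p$ together with the absolute continuity of the Duhamel integrals.

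\emph{Main obstacle.} I expect the hardest part to be the exponent bookkeeping in Step 2, namely choosing for each time regime an intermediate Lebesgue exponent so that both integrability conditions hold and the powers of $t$ close. I would first check the endpoint $p=n$, $q_0=2n$, where the scaling is critical, and then the case $p$ close to $n/2$. If the $L^{q_0/2}$ placement fails on $(0,t/2)$ for $p<n$, I would instead use the Bessel gain $\|\nabla K*v\|_{\tilde r}\le C\|v\|_{p}$ with $\frac1p-\frac1{\tilde r}\le\frac1n$. This places $N(v)$ in a space with a better exponent, at the cost of a mild loss in the early-time integrability.
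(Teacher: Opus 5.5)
\textbf{There is a genuine gap in your Step~2: the time weights do not recombine.} With the placements you describe, the Duhamel term is not bounded by $t^{-\frac n2(\frac1p-\frac1{q_0})}$ in $L^{q_0}$, nor by $O(1)$ in $L^p$. This fails for every admissible $p$, including $p=n$.

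On $(t/2,t)$ you use $\|\nabla K*v(s)\|_\infty\le C\|v(s)\|_{q_0}$. Your bound for the $L^{q_0}$ norm of this piece is then
\[
C\|v\|_X^2\int_{t/2}^t (t-s)^{-\frac12}s^{-n(\frac1p-\frac1{q_0})}\,\mathrm{d}s
= C\|v\|_X^2\, t^{-\frac n2(\frac1p-\frac1{q_0})}\cdot t^{\frac12-\frac n2(\frac1p-\frac1{q_0})} .
\]
The extra exponent is positive because $\frac1p-\frac1{q_0}\le\frac1{2p}<\frac1n$. Placing $N(v)$ in $L^p$ instead, or estimating the $L^p$ component, gives the same factor. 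So this piece grows as $t\to\infty$, for instance like $t^{1/4}$ when $p=n$, $q_0=2n$.

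On $(0,t/2)$, the $L^{q_0/2}$ placement (via $\|v\nabla K*v\|_{q_0/2}\le\|\nabla K\|_1\|v\|_{q_0}^2$) gives the target rate times $t^{\frac12-\frac n{2p}}$. For $p<n$ this blows up as $t\to0$. Your remark that $p\le n$ keeps this power below the target is true only for $t\ge1$.

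The underlying reason is that the problem has no scaling invariance. The $L^{q_0}\to L^\infty$ bound for $\nabla K*$ spends a Sobolev gain $1/q_0$. That gain is free at short times but costs decay at large times. So your claim that $\nabla K*$ is harmless at large scales ``without losing any scaling'' is backwards for that bound. Splitting the $s$-integral cannot fix this, because both pieces must be bounded for each fixed $t$. Your fallback is applied only on $(0,t/2)$, so it leaves the large-time failure on $(t/2,t)$ untouched.

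\textbf{The missing idea is to use exactly the Bessel gain $\frac1p-\frac1n$, which makes the bilinear estimate scale-invariant.} Take $q_1=\frac{np}{(n+1)p-n}$ as in \eqref{exp;r1}. Lemma~\ref{lem;bessel} gives $\nabla K\in L^{q_1}$ precisely because $n/2<p\le n$. Young's inequality then gives $\|\nabla K*v\|_{\frac{2np}{2p-n}}\le C\|v\|_{2p}$, hence $\|v\nabla K*v\|_n\le C\|v\|_{2p}^2$, as in \eqref{decay;Ir}--\eqref{est;nt0}. Now take $q_0=2p$. Then \eqref{est;nlrlp} gives, for all $t>0$ at once and with no splitting,
\[
\Big\|\int_0^t\nabla\cdot\mathrm{e}^{(t-s)\mathcal{L}_A}(v\nabla K*v)(s)\,\mathrm{d}s\Big\|_{2p}
\le C\|v\|_X^2\int_0^t(t-s)^{-1+\frac{n}{4p}}s^{-\frac{n}{2p}}\,\mathrm{d}s
= C\|v\|_X^2\,t^{-\frac{n}{4p}} .
\]
The $L^p$ component closes the same way:
\begin{itemize}
\item For $n\ge2$, use $\|v\nabla K*v\|_r\le C\|v\|_p\|v\|_{2p}$ with $\frac1r=\frac1n+\frac1{2p}$. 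This leads to the Beta integral $\int_0^t(t-s)^{-1+\frac n{4p}}s^{-\frac n{4p}}\,\mathrm{d}s=C$.
\item For $n=p=1$, use $\|v\nabla K*v\|_1\le C\|v\|_2^2$ together with $\int_0^t(t-s)^{-\frac12}s^{-\frac12}\,\mathrm{d}s=C$.
\end{itemize}
In this version both restrictions on $p$ enter only through $1\le q_1<n/(n-1)$.

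Alternatively, you can keep your two bilinear bounds but split in $t$ rather than in $s$. For $t\le1$, use the $L^\infty$ bound for $\nabla K*v$ on all of $(0,t)$. For $t\ge1$, use the $L^{q_0/2}$ placement on all of $(0,t)$, taking $q_0=2$ when $n=1$.

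Finally, a contraction yields uniqueness only in the small ball of $X$. Uniqueness in the full class \eqref{R;solv}--\eqref{eq;ieqv} would need a separate argument.
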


By replacing the smallness constants in Propositions \ref{p1} and \ref{prop;qgs} with their minimum,
we use the same notation $\varepsilon_*$ for both constants.
Decreasing $\varepsilon_*$ further if necessary, we 
may assume that $0 < \varepsilon_* \le 1$.
Since Proposition \ref{prop;qgs} yields a unique global solution $v$ to problem \eqref{eq;Q} satisfying \eqref{R;solv}--\eqref{dEcay;v0} under these assumptions, by setting 
$
	u
	=
	v
	+
	A
$,  
we obtain a unique global solution $u$ to problem \eqref{P} satisfying \eqref{regularity;uA} and \eqref{decay;uA}.
Hence we only need to investigate the detailed large-time behavior of the global solution $v$ to problem \eqref{eq;Q} obtained in Proposition \ref{prop;qgs}.

Section 2 presents preliminary estimates used throughout the paper.
Section 3 is devoted to the proof of Theorem \ref{t1}.
Section 4 establishes the approximation of the linearized semigroup by the effective heat semigroup and proves Theorem \ref{t2}.
Section 5 combines these results to prove Theorem \ref{t3}.

Before closing this section, we 
introduce notation used throughout this paper. 

\subsection{Notation} 
Let 
$
	\mathbb{Z}_+
$ 
denote
the set of nonnegative integers. 
For 
$
	\alpha
	=
	(
		\alpha_1, 
		\alpha_2,
		\ldots,
		\alpha_n
	)
	\in 
	\mathbb{Z}_+^n
$, 
set 
$
	|\alpha|
	\coloneqq
	\alpha_1
	+
	\alpha_2
	+
	\cdots
	+
	\alpha_n
$. 
We write  
$
	\partial_t
	=
	{\partial}/{\partial t}
$,
$
	\partial_j
	=
	{\partial}/{\partial x_j}
$ 
and
$
	\nabla
	=
	{}^t
	(
		\partial_1, 
		\partial_2,
		\ldots, 
		\partial_n
	)
$.
Moreover,
$
 \partial_x^\alpha
 =
 \partial_1^{\alpha_1}
 \partial_2^{\alpha_2}
 \cdots
 \partial_n^{\alpha_n}
$ 
for 
$
 \alpha
 =
 (
  \alpha_1,
  \alpha_2,
  \ldots, 
  \alpha_n
 )
 \in 
 \mathbb{Z}_{+}^n
$. 
For 
$
 1
 \le 
 p
 \le 
 \infty
$, 
the norm of the usual Lebesgue space $L^p(\mathbb{R}^n)$ is denoted by $\|\cdot\|_p$. 
Let 
$
	\mathcal{S}
	=
	\mathcal{S}(\mathbb{R}^n)
$ 
be the set of rapidly decreasing functions on $\mathbb{R}^n$. 
For $f \in \mathcal{S}$, the symbol $\mathcal{F}[f]$ stands for the Fourier transform of $f$, that is, 
$
 \mathcal{F}[f](\xi)
 \coloneqq 
 (2\pi)^{-n/2}
 \int_{\mathbb{R}^n} \mathrm{e}^{-ix \cdot \xi}
  f(x)
 \, \mathrm{d}x, 
 \xi
 \in \mathbb{R}^n
$, 
and the inverse Fourier transform of $f$ is defined by 
$
 \mathcal{F}^{-1}[f](x)
 \coloneqq 
 (2\pi)^{-n/2}
 \int_{\mathbb{R}^n} 
  \mathrm{e}^{ix \cdot \xi}
   f(\xi)
  \, \mathrm{d}\xi
 , x \in \mathbb{R}^n
$. 
For $x \in \mathbb{R}$,
let
$
	\lfloor 
		x
	\rfloor
$
denote the greatest integer less than or equal to $x$.
Let $C$ be a positive constant which may change from line to line. In particular, we write $C(\ast,\ldots,\ast)$ for a positive constant depending on the quantities appearing in parentheses.

\section{Preliminaries}

In this section, we collect several preliminary estimates used in Theorems \ref{t1}, \ref{t2} and \ref{t3}.
We begin with the following lemma:

\begin{lem}\label{lem;ede}
Let $n \ge 1$, $s > -n$ and 
$
	k > 0
$. 
Then there exists $C = C(n,s,k) > 0$ such that 
\begin{equation}\label{es;ede}
	\int_{\mathbb{R}^n} 
		|\xi|^s\mathrm{e}^{-kt |\xi|^2}
	\,
	\mathrm{d}\xi
	=
	Ct^{-\frac{n}2-\frac{s}2}, \quad t>0.
\end{equation}
\end{lem}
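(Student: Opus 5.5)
The plan is to prove the identity by rescaling the integration variable, which pulls out the factor of $t$. Finiteness and positivity of the resulting constant then follow from polar coordinates. Fix $t>0$ and substitute $\xi = t^{-1/2}\eta$, so that $\mathrm{d}\xi = t^{-n/2}\,\mathrm{d}\eta$ and $|\xi|^s = t^{-s/2}|\eta|^s$. The integral in \eqref{es;ede} then becomes
\[
	t^{-\frac{n}2-\frac{s}2}\int_{\mathbb{R}^n}|\eta|^s\mathrm{e}^{-k|\eta|^2}\,\mathrm{d}\eta .
\]
It therefore suffices to show that
\[
	C \coloneqq \int_{\mathbb{R}^n}|\eta|^s\mathrm{e}^{-k|\eta|^2}\,\mathrm{d}\eta
\]
is a finite positive number depending only on $n,s,k$.

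To evaluate $C$, pass to polar coordinates, which gives $C=|\mathbb{S}^{n-1}|\int_0^\infty r^{s+n-1}\mathrm{e}^{-kr^2}\,\mathrm{d}r$. For $n=1$ the factor $|\mathbb{S}^{0}|$ is read as $2$, since the integrand is even. The substitution $\rho=kr^2$ turns this into
\[
	C=\tfrac12|\mathbb{S}^{n-1}|\,k^{-\frac{n+s}2}\,\Gamma\!\left(\tfrac{n+s}2\right).
\]
This is finite and positive precisely because $(n+s)/2>0$, i.e.\ $s>-n$.

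The only point needing care is convergence near the origin, since $s$ may be negative. There the integrand behaves like $r^{s+n-1}$, which is integrable on $(0,1)$ exactly when $s+n>0$. This is the hypothesis of the lemma. At infinity, the Gaussian factor guarantees convergence for every $s$. No other obstacle arises, and the constant obtained is explicit.
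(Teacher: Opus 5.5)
Your proposal is correct and follows essentially the same argument as the paper: a rescaling of the integration variable pulls out the factor $t^{-\frac{n}{2}-\frac{s}{2}}$. The paper substitutes $\eta=(kt)^{1/2}\xi$ and simply asserts the constant, whereas your substitution $\xi=t^{-1/2}\eta$ keeps $k$ in the constant and you add the polar-coordinate computation $C=\tfrac12|\mathbb{S}^{n-1}|\,k^{-\frac{n+s}{2}}\Gamma\!\left(\tfrac{n+s}{2}\right)$, which makes explicit that $C$ is finite and positive exactly when $s>-n$.
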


\begin{proof}
By the change of variables $\eta = (kt)^{1/2} \xi$, we obtain 
\begin{align*}
	\int_{\mathbb{R}^n} 
		|\xi|^s
		\mathrm{e}^{-kt|\xi|^2}
	\, \mathrm{d}\xi
	= \, &
	\int_{\mathbb{R}^n} 
		|(kt)^{-\frac12}\eta|^s
		\mathrm{e}^{-|\eta|^2}(kt)^{-\frac{n}2}
	\, \mathrm{d}\eta \\
	= \, &
	(kt)^{-\frac{n}2-\frac{s}2}
	\int_{\mathbb{R}^n} 
		|\eta|^s
		\mathrm{e}^{-|\eta|^2}
	\, \mathrm{d}\eta \\
	= \, &
	C
	t^{-\frac{n}2-\frac{s}2}, 
	\qquad t>0.
\end{align*}
Therefore we obtain the desired estimate \eqref{es;ede}. 
\end{proof}
By \cite[Lemma 3.6]{CKKW}, we get properties of the function $K(x)=\beta_1B_{\lambda_1}(x)-\beta_2B_{\lambda_2}(x)$. 
\begin{lem}\label{lem;bessel}
Let $\alpha \in \mathbb{Z}_{+}^n$ with $|\alpha| \le 1$ 
and assume that 
\[
	1
	\le 
	q
	<
	\frac{n}{n-2+|\alpha|} 
	\quad 
	\text{if} \quad n \ge 2 \quad 
	\text{or} \quad 1 \le q \le \infty \quad \text{if} \quad n=1.
\]
Then $\partial_x^\alpha K \in L^q(\mathbb{R}^n)$.
\end{lem}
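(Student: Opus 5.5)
Since $K=\beta_1B_{\lambda_1}-\beta_2B_{\lambda_2}$, it suffices to prove the integrability statement for a single Bessel potential $B_\lambda$, $\lambda>0$; the claim for $K$ then follows by the triangle inequality in $L^q$. For $B_\lambda$, the plan is to split $\mathbb{R}^n$ into the region $|x|\le 1$ and the region $|x|>1$. On $|x|>1$ we show exponential decay, and near the origin we identify the precise power-law singularity. Throughout we work from the subordination formula in the definition of $B_\lambda$ and differentiate under the integral sign. This is legitimate for $x\neq0$ and gives
\[
	\nabla B_\lambda(x)=-\frac{x}{2(4\pi)^{n/2}}\int_0^\infty \mathrm{e}^{-\lambda\sigma-\frac{|x|^2}{4\sigma}}\sigma^{-\frac n2-1}\,\mathrm{d}\sigma .
\]
It follows that both $|B_\lambda(x)|$ and $|\nabla B_\lambda(x)|$ are controlled by integrals of the form $|x|^{|\alpha|}\int_0^\infty \mathrm{e}^{-\lambda\sigma-|x|^2/(4\sigma)}\sigma^{-\frac n2-|\alpha|}\,\mathrm{d}\sigma$.

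For large $|x|$ we use the elementary inequality $\lambda\sigma+|x|^2/(4\sigma)\ge \sqrt{\lambda}|x|$. Half of the exponent goes into this bound, and the other half keeps the $\sigma$-integral convergent. For $|x|\ge1$ this gives the estimate $|\partial_x^\alpha B_\lambda(x)|\le C\mathrm{e}^{-\sqrt\lambda|x|/2}$, uniformly in $x$. Hence on $\{|x|>1\}$ the function $\partial_x^\alpha B_\lambda$ lies in every $L^q$, $1\le q\le\infty$.

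For small $|x|$ we substitute $\sigma=|x|^2\tau$ and drop the factor $\mathrm{e}^{-\lambda\sigma}\le1$. The integral then becomes $|x|^{2-n-2|\alpha|}\int_0^\infty \mathrm{e}^{-1/(4\tau)}\tau^{-\frac n2-|\alpha|}\,\mathrm{d}\tau$. This $\tau$-integral converges exactly when $\frac n2+|\alpha|>1$, and in that case $|\partial_x^\alpha B_\lambda(x)|\le C|x|^{-(n-2+|\alpha|)}$. Such a function belongs to $L^q(\{|x|\le1\})$ precisely when $q(n-2+|\alpha|)<n$, which is the stated condition $q<n/(n-2+|\alpha|)$. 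When $n-2+|\alpha|\le0$ the condition reads as no restriction on $q$.

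The main obstacle is this near-origin analysis in the low-dimensional cases where $\frac n2+|\alpha|\le1$, namely $n=1$ with $\alpha=0$ and $n=2$ with $\alpha=0$. Here the crude bound diverges, so we must keep the factor $\mathrm{e}^{-\lambda\sigma}$ to control the large-$\sigma$ part.
\begin{itemize}
\item For $n=1$ we compute directly: $B_\lambda(x)=\mathrm{e}^{-\sqrt\lambda|x|}/(2\sqrt\lambda)$ and $B_\lambda'(x)=-\tfrac12\operatorname{sgn}(x)\mathrm{e}^{-\sqrt\lambda|x|}$. Both are bounded and exponentially decaying, so they lie in every $L^q$, $1\le q\le\infty$.
\item For $n=2$ with $\alpha=0$ we split the $\sigma$-integral at $\sigma=1$. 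The piece over $\sigma\le1$ is bounded by $\int_{|x|^2}^{\infty}\mathrm{e}^{-1/(4\tau)}\tau^{-1}\,\mathrm{d}\tau$ restricted appropriately, which is $\le C(1+|\log|x||)$. The piece over $\sigma\ge1$ is bounded by $\int_1^\infty \mathrm{e}^{-\lambda\sigma}\,\mathrm{d}\sigma$. A logarithmic singularity lies in $L^q_{\mathrm{loc}}$ for every $q<\infty$, which matches the condition $q<n/(n-2)=\infty$.
\end{itemize}
Combining the near-origin and large-$|x|$ estimates gives $\partial_x^\alpha B_\lambda\in L^q(\mathbb{R}^n)$ in each case, and hence $\partial_x^\alpha K\in L^q(\mathbb{R}^n)$.
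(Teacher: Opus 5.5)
Your proof is correct, and it differs from the paper mainly in being self-contained. The paper does not prove this lemma; it simply quotes \cite[Lemma~3.6]{CKKW}, relying on the known behaviour of Bessel kernels: a $|x|^{2-n}$ (resp.\ logarithmic) singularity at the origin and exponential decay at infinity. You instead derive that behaviour directly from the subordination formula, in three steps:
\begin{itemize}
\item the splitting $\lambda\sigma+|x|^2/(4\sigma)\ge\sqrt{\lambda}|x|$ gives exponential decay for $|x|\ge1$;
\item the scaling $\sigma=|x|^2\tau$ gives $|\partial_x^\alpha B_\lambda(x)|\le C|x|^{2-n-|\alpha|}$ whenever $n/2+|\alpha|>1$;
\item the two cases where that scaled integral diverges are handled separately, namely $n=1$ via the explicit kernel and $n=2$, $\alpha=0$ via the logarithm.
\end{itemize}
Your route shows exactly where the threshold $n/(n-2+|\alpha|)$ comes from. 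It also shows why $q=\infty$ is allowed when $n=1$, and why $n=2$, $|\alpha|=0$ must be read as $q<\infty$, as in the paper's remark. The citation buys only brevity.

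There are a few minor slips, none fatal:
\begin{itemize}
\item For $|x|\ge1$ your splitting actually gives $C|x|^{|\alpha|}\mathrm{e}^{-\sqrt{\lambda}|x|/2}\le C'\mathrm{e}^{-\sqrt{\lambda}|x|/4}$, not $C\mathrm{e}^{-\sqrt{\lambda}|x|/2}$.
\item In the $n=2$ case the substitution turns $\sigma\le1$ into $\tau\le|x|^{-2}$. So the relevant bound is $\int_0^{|x|^{-2}}\mathrm{e}^{-1/(4\tau)}\tau^{-1}\,\mathrm{d}\tau\le C+2|\log|x||$, not an integral over $[|x|^2,\infty)$.
\item The claim of ``no restriction on $q$ when $n-2+|\alpha|\le0$'' is wrong for $n=2$, $\alpha=0$, where only $q<\infty$ holds. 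You treat that case separately anyway, so nothing breaks.
\end{itemize}

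Finally, the paper uses $\nabla K*f=K*\nabla f$, which needs $\partial_x^\alpha K$ to be the distributional derivative. To cover this, add the routine check that no point mass sits at the origin. For $n\ge2$ this follows from $\int_{|x|=\varepsilon}|B_\lambda|\,\mathrm{d}S\to0$, and for $n=1$ it follows because $B_\lambda$ is Lipschitz.
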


\begin{rem}
The condition on $q$ is understood as $1\le q<\infty$ when $n=2$ and $|\alpha|=0$ in Lemma \ref{lem;bessel}.
\end{rem}

The following proposition gives the $L^p$-$L^q$ estimates for the analytic semigroup 
$
	\mathrm{e}^{t\mathcal{L}_A}
$ 
defined by \eqref{eq;semila}.

\begin{prop}[Proposition~6.6 in~\cite{WY}]\label{prop;lrlpest}
Let $1 \le p \le q \le \infty$, and suppose that $0 < A < A_*$. 
Then, for all $t>0$ and $f\in L^p(\mathbb{R}^n)$, 
\begin{align}
	& \,
	\label{est;lrlp}
	\|
		\mathrm{e}^{t\mathcal{L}_A}
		f
	\|_q
	\le 
	C
	t^{-\frac{n}2(\frac1p-\frac1q)}
	\|
		f
	\|_p, \\
	&\label{est;nlrlp}
	\|
		\nabla \mathrm{e}^{t\mathcal{L}_A}
		f
	\|_q
	\le 
	C'
	t^{-\frac{n}2(\frac1p-\frac1q)-\frac12}
	\|
		f
	\|_p
\end{align}
hold,
where 
$
	C 
	= 
	C(n,\beta_1,\beta_2,\lambda_1,\lambda_2,A,p,q) 
	> 
	0
$ 
and 
$
	C'
	=
	C'(n,\beta_1,\beta_2,\lambda_1,\lambda_2,A,p,q) 
	>
	0
$
are independent of $f$ and $t$.
\end{prop}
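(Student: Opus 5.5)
The statement is Proposition~\ref{prop;lrlpest}, the $L^p$-$L^q$ estimates for $\mathrm{e}^{t\mathcal{L}_A}$. The plan is to bound the kernel $G_t \coloneqq (2\pi)^{-n/2}\mathcal{F}^{-1}[\mathrm{e}^{th_A}]$ in every $L^r$, $1\le r\le\infty$. Specifically, we aim for
\[
	\|G_t\|_r\le Ct^{-\frac n2(1-\frac1r)},
	\qquad
	\|\nabla G_t\|_r\le Ct^{-\frac n2(1-\frac1r)-\frac12}.
\]
Young's inequality with $1+\frac1q=\frac1r+\frac1p$ then gives \eqref{est;lrlp} and \eqref{est;nlrlp} at once. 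By interpolation it suffices to treat $r=1$ and $r=\infty$.

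The first step is the symbol bound
\[
	h_A(\xi)\le -c|\xi|^2 \quad\text{for all } \xi,
\]
with some $c>0$, which is where $0<A<A_\ast$ enters. Write $s=|\xi|^2$. Then $-h_A/s=1-\beta_1A/(\lambda_1+s)+\beta_2A/(\lambda_2+s)$, and one checks that the infimum over $s\ge0$ is positive exactly when $A<A_\ast$. At $s=0$ this value is $c_\ast>0$. At an interior critical point, optimizing yields the $(\sqrt{\beta_1}-\sqrt{\beta_2})^2$ threshold. As $s\to\infty$ the value tends to $1$. This reproduces the three cases in \eqref{const;as}.

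The $L^\infty$ bounds are then immediate:
\[
	\|\partial^\alpha G_t\|_\infty\le C\int|\xi|^{|\alpha|}\mathrm{e}^{-ct|\xi|^2}\,\mathrm{d}\xi
	= Ct^{-\frac n2-\frac{|\alpha|}2},
\]
by Lemma~\ref{lem;ede}.

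The main obstacle is the $L^1$ bound, which must be uniform in $t$. I would split into low and high frequencies.
\begin{enumerate}
\item For large $t$, the low-frequency part dominates. I would use the weighted estimate
\[
	\|g\|_1\le C\|g\|_2^{1-\frac n{2N}}\,\||x|^N g\|_2^{\frac n{2N}}, \qquad N>\frac n2,
\]
and Plancherel, which reduces matters to estimating $\partial_\xi^\beta(\xi^\alpha\mathrm{e}^{th_A})$ in $L^2$. Since $h_A$ is a smooth rational function of $|\xi|^2$ with bounded derivatives, each $\xi$-derivative of $\mathrm{e}^{th_A}$ costs a factor $t^{1/2}$ after scaling $\xi\mapsto t^{-1/2}\xi$. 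This yields $\|\partial^\alpha G_t\|_1\le Ct^{-|\alpha|/2}$, the same scaling as the heat kernel.
\item For $0<t\le1$, I would instead write $\mathrm{e}^{th_A}=\mathrm{e}^{-t|\xi|^2}\mathrm{e}^{tm(\xi)}$, where $m(\xi)=A\bigl(\beta_1\frac{|\xi|^2}{\lambda_1+|\xi|^2}-\beta_2\frac{|\xi|^2}{\lambda_2+|\xi|^2}\bigr)=-A\,\mathcal{F}[\Delta K]$ up to normalization. Since $-\Delta B_\lambda=\delta-\lambda B_\lambda$, the multiplier $m$ equals a constant plus the Fourier transform of an $L^1$ function, by Lemma~\ref{lem;bessel}. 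Hence $\mathrm{e}^{tm}$ is the Fourier transform of a finite measure whose total variation is bounded for $t\le1$. Convolving that measure with the heat kernel gives the needed $L^1$ bounds for $G_t$ and $\nabla G_t$.
\end{enumerate}
This measure-algebra argument in fact works for all $t$ up to a factor $\mathrm{e}^{Ct}$, so step (i) is genuinely needed for large $t$. There the delicate point is controlling all derivatives of $\mathrm{e}^{th_A}$ uniformly, which is where the bound $h_A\le -c|\xi|^2$ is used.
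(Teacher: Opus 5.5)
Your argument is correct; it reaches the result by a somewhat different route from the one the paper relies on. The paper does not reprove this statement but quotes it from \cite{WY}. The machinery it borrows from there in Section~4 is the symbol bound of Lemma~\ref{lem;esha0}, Fa\`a di Bruno-type derivative bounds as in Lemma~\ref{lem;xecaa}, and the $L^2$-to-$L^1$ interpolation \cite[Lemma 3.4]{WY} used in Lemma~\ref{lem;keyK}. These suggest that \cite{WY} argues as in your step (i) applied to $\mathrm{e}^{th_A}$.

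The genuine difference is your split at $t=1$, and it can be avoided. Since $h_A+|\xi|^2$ is a smooth radial function whose derivatives of order $k\ge 1$ are $O((1+|\xi|)^{-2-k})$, one has the scale-invariant bound $|\partial_\xi^\alpha h_A(\xi)|\le C|\xi|^{2-|\alpha|}$ for all $|\alpha|\ge 1$ and $\xi\neq 0$. Hence $|\partial_\xi^\alpha \mathrm{e}^{th_A}|\le C\mathrm{e}^{-ct|\xi|^2}\sum_{l=1}^{|\alpha|}t^l|\xi|^{2l-|\alpha|}$ for \emph{every} $t>0$. Take $N=\lfloor n/2\rfloor+1$, so that $N>n/2$ and $2(2-N)>-n$. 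The interpolation inequality then gives $\|G_t\|_1\le C$ and $\|\nabla G_t\|_1\le Ct^{-1/2}$ uniformly in $t>0$ in one stroke.

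Your version used only the boundedness of $\partial^k h_A$ for $k\ge 2$. Note that $\nabla h_A$ itself is only $O(|\xi|)$, not bounded as you wrote. After rescaling, this leaves factors $t^{1-k/2}$ that are harmless only for $t\ge 1$, and that is what forced your step (ii).

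In return, step (ii) gives a derivative-free argument for small times. Since $\mathcal{L}_A$ is $\Delta$ plus a bounded convolution operator, the kernel is the heat kernel convolved with a finite measure of total variation at most $\mathrm{e}^{Ct}$. This is correct and natural, but unnecessary once the symbol bounds are written in homogeneous form.
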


\section{Proof of Theorem \ref{t1}}
Let $p$, $A$ and $v_0$ satisfy
the same conditions as in Proposition \ref{p1}, 
and let $v$ be the global solution to problem \eqref{eq;Q} constructed in Proposition \ref{prop;qgs}. 
As mentioned in the introduction, we assume that the constant $\varepsilon_\ast$ in Proposition \ref{prop;qgs} satisfies 
$0<\varepsilon_\ast\le 1$.
We first improve the decay estimate in \eqref{dEcay;v0} for $n=1$.

\begin{lem}\label{lem;idecayv1}
Let $n=1$ and $1\le q\le \infty$. Then we have
\begin{equation}\label{decay;v1}
	\|
		v(t)
	\|_q
	\le 
	C
	t^{-\frac12(1-\frac1q)}
	\|v_0\|_1
	, \qquad t>0,
\end{equation}
where $C > 0$ depends only on $\beta_1, \beta_2, \lambda_1, \lambda_2, A$ and $q$.
\end{lem}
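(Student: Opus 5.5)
We work from the Duhamel formula \eqref{eq;ieqv}. The given bound \eqref{dEcay;v0} with $n=p=1$ only covers $1<q\le 2$ (plus $q=1$), so the aim is to extend it to all $1\le q\le\infty$. Set
\[
M_\infty(T)\coloneqq\sup_{0<t\le T}t^{\frac12}\|v(t)\|_\infty ,
\]
which is finite for each $T$ by the local theory and the regularity \eqref{R;solv}. Our target is a bound on $M_\infty(T)$ that is uniform in $T$. After that, interpolation between $\|v\|_1\le C\|v_0\|_1$ and $\|v\|_\infty\le Ct^{-1/2}\|v_0\|_1$ gives \eqref{decay;v1} for every $q$.

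\emph{Linear part.} By \eqref{est;lrlp} with $p=1$, $q=\infty$,
\[
\|\mathrm{e}^{t\mathcal{L}_A}v_0\|_\infty\le Ct^{-1/2}\|v_0\|_1 .
\]

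\emph{Nonlinear part.} We use \eqref{est;nlrlp}, writing $\nabla\cdot\mathrm{e}^{(t-s)\mathcal{L}_A}$ as a derivative of the kernel, applied from $L^r$ to $L^\infty$. By Lemma \ref{lem;bessel} with $n=1$, $\nabla K\in L^1\cap L^\infty$, so Young's inequality gives
\[
\|\nabla K*v\|_\infty\le\|\nabla K\|_\infty\|v\|_1\le C\|v_0\|_1 .
\]
Hence
\[
\|v\nabla K*v\|_r\le C\|v_0\|_1\|v(s)\|_r .
\]

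\emph{Splitting the integral.} On $(0,t/2)$ take $r=1$:
\[
\int_0^{t/2}(t-s)^{-1}\|v_0\|_1^2\,\mathrm{d}s
\]
diverges at neither end, but it yields $\log 2\cdot\|v_0\|_1^2$, with no decay in $t$. This is the main obstacle: the $L^1$ bound alone gives no decay. So on $(0,t/2)$ we instead take $r=2$, which is admissible since $q=2\le 2p$ in \eqref{dEcay;v0}, giving $\|v(s)\|_2\le Cs^{-1/4}\|v_0\|_1$. Then
\[
\int_0^{t/2}(t-s)^{-\frac14-\frac12}s^{-\frac14}\,\mathrm{d}s\,\|v_0\|_1^2\le Ct^{-\frac12}\|v_0\|_1^2 ,
\]
which is the right rate. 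On $(t/2,t)$ take $r=\infty$:
\[
\int_{t/2}^t(t-s)^{-1/2}s^{-1/2}M_\infty(T)\|v_0\|_1\,\mathrm{d}s\le CM_\infty(T)\|v_0\|_1 ,
\]
which loses the $t^{-1/2}$ factor. To avoid this, on $(t/2,t)$ use $r=2$ instead:
\[
\int_{t/2}^t(t-s)^{-\frac34}s^{-\frac14}\,\mathrm{d}s\le Ct^{0}\cdot t^{-1/4}\cdot t^{1/4}=C ,
\]
which again fails to decay. So take a mixed norm: estimate $\|v\nabla K*v\|_2\le\|v\|_\infty^{1/2}\|v\|_1^{1/2}\|\nabla K*v\|_\infty$, or more simply bound the product by $\|v\|_4^2$-type quantities. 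Alternatively, use $r$ slightly below $\infty$ with $\|v\|_r\le \|v\|_2^{\theta}\|v\|_\infty^{1-\theta}$, giving a term $\le C\|v_0\|_1^{1+\theta}M_\infty(T)^{1-\theta}t^{-1/2}$ after checking that the exponents integrate (they do, since $(t-s)^{-1/2-1/(2r)}$ is integrable for $r>1$).

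\emph{Closing the argument.} This yields
\[
M_\infty(T)\le C\|v_0\|_1+C\|v_0\|_1^2+C\|v_0\|_1^{1+\theta}M_\infty(T)^{1-\theta}.
\]
Young's inequality absorbs the last term, giving $M_\infty(T)\le C\|v_0\|_1$ uniformly in $T$ (using $\|v_0\|_1<\varepsilon_*\le1$). The step we expect to need care is this choice of $r$ on $(t/2,t)$, together with making sure $M_\infty(T)<\infty$, which comes from the local smoothing.
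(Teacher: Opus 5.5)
Your argument has a genuine gap at the decisive step. Throughout, you control the field by $\|\nabla K*v\|_\infty\le\|\nabla K\|_\infty\|v\|_1\le C\|v_0\|_1$. This bound does not decay, and it costs exactly the factor $s^{-1/2}$ that the $L^\infty$ estimate needs.

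To see this, take $\|v\nabla K*v\|_r\le C\|v_0\|_1\|v(s)\|_r$ with $\|v(s)\|_r\lesssim s^{-\frac12(1-\frac1r)}$. This holds whether the bound comes from \eqref{dEcay;v0} or from $M_\infty(T)$ by interpolation. The integrand $(t-s)^{-\frac1{2r}-\frac12}s^{-\frac12+\frac1{2r}}$ is then homogeneous of degree $-1$ in $(s,t)$. After the substitution $s=t\sigma$, every piece $\int_{at}^{bt}$ of the Duhamel integral is independent of $t$.

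In particular, your two claimed $t^{-1/2}$ bounds are miscomputed. The integral $\int_0^{t/2}(t-s)^{-3/4}s^{-1/4}\,\mathrm{d}s=\int_0^{1/2}(1-\sigma)^{-3/4}\sigma^{-1/4}\,\mathrm{d}\sigma$ is a constant. The interpolated term on $(t/2,t)$ is $C\|v_0\|_1^{1+\theta}M_\infty(T)^{1-\theta}$, with no factor $t^{-1/2}$. After multiplying by $t^{1/2}$, the inequality for $M_\infty(T)$ does not close. No choice of $r$ or of splitting can repair this as long as $\nabla K*v$ is estimated through $\|v\|_1$. Separately, the finiteness of $M_\infty(T)$ is not supplied by \eqref{R;solv}, which only gives $v\in C((0,\infty);L^2(\mathbb{R}))$ when $n=1$.

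The missing idea is to make both factors of the nonlinearity decay. Use $\|\nabla K*v\|_r\le\|\nabla K\|_1\|v\|_r$ instead; Lemma \ref{lem;bessel} gives $\nabla K\in L^1$. Initially only $\|v(s)\|_2\le Cs^{-1/4}\|v_0\|_1$ is available, so the product lies in $L^1$ with decay $s^{-1/2}$. However, $L^1\to L^\infty$ with a derivative produces the nonintegrable factor $(t-s)^{-1}$. This is why the paper bootstraps through $L^4$. Writing $I(t)$ for the Duhamel term, the first step is
\[
\|I(t)\|_4\le C\int_0^t(t-s)^{-\frac78}\|\nabla K\|_1\|v(s)\|_2^2\,\mathrm{d}s\le Ct^{-\frac38}\|v_0\|_1^2,
\]
so $\|v(t)\|_4\le Ct^{-3/8}\|v_0\|_1$. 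The second step is
\[
\|I(t)\|_\infty\le C\int_0^t(t-s)^{-\frac34}\|\nabla K\|_1\|v(s)\|_4^2\,\mathrm{d}s\le Ct^{-\frac12}\|v_0\|_1^2.
\]
This needs neither an a priori bound on $M_\infty(T)$ nor any absorption argument. The condition $\|v_0\|_1<\varepsilon_*\le 1$ is used only to replace higher powers of $\|v_0\|_1$ by $\|v_0\|_1$. Your final interpolation between $L^1$ and $L^\infty$ is then exactly the paper's last step.
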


\begin{proof}
Taking the $L^q$-norm in \eqref{eq;ieqv} 
and applying \eqref{est;lrlp}, we obtain
\begin{equation}\label{no;v0}
	\begin{aligned}
		\|
			v(t)
		\|_q
		\le \, &
		\|
			\mathrm{e}^{t\mathcal{L}_A}v_0
		\|_q
		+
		\|
			I(t)
		\|_q\\
		\le \, &
		C
		t^{-\frac12(1-\frac1q)}
		\|
			v_0
		\|_1
		+
		\|
			I(t)
		\|_q, 
		\qquad t>0, 
	\end{aligned}
\end{equation}  
where 
\begin{equation}\label{eq;II}
	I(t) 
	\coloneqq 
	-\int_0^t
		\nabla \cdot \mathrm{e}^{(t-s)\mathcal{L}_A}(v\nabla K*v)(s)
	\, \mathrm{d}s.
\end{equation}

We first estimate the $L^4$-norm for $v(t)$.
Using \eqref{dEcay;v0} with $q=2$, \eqref{est;nlrlp}, Lemma \ref{lem;bessel}, 
Young's 
inequality and the Cauchy-Schwarz inequality, we obtain  
\begin{equation}\label{decay;I4}
	\begin{aligned}
		\|
			I(t)
		\|_4
		\le \, &
		C
		\int_0^t
			(t-s)^{-\frac78}
			\|
				v(s)
				\nabla K*v(s)
			\|_1
		\, \mathrm{d}s \\
		\le \, & 
		C
		\int_0^t
			(t-s)^{-\frac78}
			\|
				v(s)
			\|_2
			\|
				\nabla K*v(s)
			\|_2
		\, \mathrm{d}s \\
		\le \, & 
		C
		\int_0^t
			(t-s)^{-\frac78}
			\|
				\nabla 
                K
			\|_1
			\|
				v(s)
			\|_2^2
		\, \mathrm{d}s \\
		\le \, & 
		C
		\|
			v_0
		\|_1^2
		\int_0^t
			(t-s)^{-\frac78}
			s^{-\frac12}
		\, \mathrm{d}s \\
		\le \, & 
		C
		t^{-\frac38}
		\|
			v_0
		\|_1^2
		, \qquad t>0.
\end{aligned}
\end{equation}
Hence, plugging \eqref{decay;I4} into \eqref{no;v0} with $q=4$ implies 
\begin{equation}\label{decay;v4}
	\begin{aligned}
	\|
		v(t)
	\|_4
	\le \, & 
	C
	t^{-\frac{3}{8}}
	(
		\|
			v_0
		\|_1
		+
		\|
			v_0
		\|_1^2
	)\\
	\le \, & 
	C
	t^{-\frac{3}{8}}
		\|
			v_0
		\|_1
	, 
	\qquad t>0.
	\end{aligned}
\end{equation}

We next 
prove the 
endpoint 
estimate 
\eqref{decay;v1} 
for $q=\infty$. 
By \eqref{est;nlrlp}, \eqref{decay;v4}, Lemma \ref{lem;bessel}, 
Young's 
inequality and the Cauchy-Schwarz inequality, we get 
\begin{equation}\label{decay;Ii}
	\begin{aligned}
		\|
			I(t)
		\|_\infty
		\le \, & 
		C
		\int_0^t
			(t-s)^{-\frac34}
			\|
				v(s)
                \nabla K*v(s)
			\|_2
		\, \mathrm{d}s \\
		\le \, & 
		C
		\int_0^t
			(t-s)^{-\frac34}
			\|
				v(s)
			\|_{4}
			\|
                  \nabla K*v(s)
			\|_4
		\, \mathrm{d}s \\
		\le \, & 
		C
		\int_0^t
			(t-s)^{-\frac34}
			\|
				\nabla K
			\|_1
			\|
				v(s)
			\|_{4}^2
		\, \mathrm{d}s \\
		\le \, & 
		C
		(
			\|
				v_0
			\|_1
			+
			\|
				v_0
			\|_1^2
		)^2
		\int_0^t
			(t-s)^{-\frac34}
			s^{-\frac34}
		\, \mathrm{d}s \\
		\le \, & 
		C
		t^{-\frac12}
		(
			\|
				v_0
			\|_1
			+
			\|
				v_0
			\|_1^2
		)^2\\
		\le \, & 
		C
		t^{-\frac12}
			\|
				v_0
			\|_1
		, \qquad t>0.
\end{aligned}
\end{equation}
Therefore, decay estimate \eqref{decay;v1} with $q=\infty$ 
follows from \eqref{no;v0} with $q=\infty$ and \eqref{decay;Ii}.

As a consequence, 
we can obtain the desired estimate \eqref{decay;v1} for all $1\le q\le \infty$ 
because of 
the $L^1$-bound in \eqref{dEcay;v0}, 
\eqref{decay;v1} with $q=\infty$, 
and 
$
	\|
		f
	\|_q
	\le 
	\|
		f
	\|_1^{1/q}
	\|
		f
	\|_\infty^{1-1/q}
$
for
$1<q<\infty$.
\end{proof}

We next extend the decay estimate in \eqref{dEcay;v0} to the range $p \le q \le \infty$ for $n\ge 2$.

\begin{lem}\label{lem;idecayv2}
Let $n \ge 2$, $n/2 < p \le n$, and $p \le q \le \infty$. Then we have
\begin{equation}\label{decay;v2}
	\|
		v(t)
	\|_q
	\le 
	C
	t^{-\frac{n}2(\frac1p-\frac1q)}
	\|
		v_0
	\|_p
	, \qquad t>0,
\end{equation}
where $C>0$ depends only on $n, \beta_1, \beta_2, \lambda_1, \lambda_2, A, p$ and $q$.
\end{lem}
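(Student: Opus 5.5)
The plan mirrors Lemma~\ref{lem;idecayv1}. Proposition~\ref{prop;qgs} already gives \eqref{decay;v2} for $q=p$ and for $n<q\le 2p$. So it is enough to prove the endpoint $q=\infty$,
\[
	\|v(t)\|_\infty\le C t^{-\frac{n}{2p}}\|v_0\|_p ,
\]
and then interpolate via $\|f\|_q\le \|f\|_p^{p/q}\|f\|_\infty^{1-p/q}$ to cover all $p\le q\le\infty$. Throughout, fix $q_0\in(n,2p]$.

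For $t>0$, use the semigroup property to restart the Duhamel formula \eqref{eq;ieqv} at time $t/2$:
\[
	v(t)=\mathrm{e}^{\frac t2\mathcal{L}_A}v(t/2)-\int_{t/2}^t\nabla\cdot\mathrm{e}^{(t-s)\mathcal{L}_A}(v\nabla K*v)(s)\,\mathrm{d}s .
\]
The linear part is handled by \eqref{est;lrlp} from $L^{q_0}$ to $L^\infty$ together with \eqref{dEcay;v0}. This gives $Ct^{-\frac{n}{2q_0}}\|v(t/2)\|_{q_0}\le Ct^{-\frac{n}{2p}}\|v_0\|_p$. Since $s\sim t$ on $[t/2,t]$, the only singularity in the integral is at $s=t$. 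For it I apply \eqref{est;nlrlp} from $L^{r}$ to $L^\infty$ with $r=q_0>n$, which makes the kernel $(t-s)^{-\frac{n}{2q_0}-\frac12}$ integrable. The product is then split as $\|v\nabla K*v\|_{q_0}\le\|v\|_{q_0}\|\nabla K*v\|_\infty$, and the second factor is estimated by Young's inequality and Lemma~\ref{lem;bessel}, in one of two ways depending on whether $t$ is small or large.

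For $t\ge1$ I use $\nabla K\in L^1$, so $\|\nabla K*v(s)\|_\infty\le C\|v(s)\|_\infty$. Setting $M(T):=\sup_{0<t\le T}t^{\frac n{2p}}\|v(t)\|_\infty$, the integral is then bounded by
\[
	CM(T)\|v_0\|_p\,t^{-\frac n{2p}}\,t^{\frac12-\frac n{2p}} .
\]
The extra factor has exponent $\frac12-\frac n{2p}\le0$ because $p\le n$, so it is harmless at large times. For $t\le1$ this homogeneity is bad, and I instead use $\nabla K\in L^{a}$ with $a<\frac n{n-1}$, so that $\|\nabla K*v\|_\infty\le C\|v\|_{a'}$. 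Here $a'\in(n,2p]$ is chosen with $a'\le \frac{np}{n-p}$ (or any $a'\le 2p$ if $p=n$), which is possible precisely because $p>n/2$. With \eqref{dEcay;v0} the integral becomes
\[
	C\|v_0\|_p^2\, t^{-\frac n{2p}}\,t^{1-\frac n{2p}+\frac n{2a'}-\frac12}\cdot t^{0},
\]
whose extra exponent is $\ge0$, hence bounded for $t\le1$. Combining the two regimes gives
\[
	M(T)\le C\|v_0\|_p+C\|v_0\|_p^2+C\|v_0\|_pM(T).
\]
After possibly decreasing $\varepsilon_*$, the last term can be absorbed, giving $M(T)\le C\|v_0\|_p$ uniformly in $T$.

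The main obstacle I expect is justifying the absorption. One needs $M(T)<\infty$ for each finite $T$, that is, $t^{\frac n{2p}}\|v(t)\|_\infty$ must be bounded near $t=0$. If this is not directly available from the construction in \cite{WY}, I would first run the small-time estimate above (with $a'$ and $r=q_0$, which involves only known $L^{q_0}$ norms) to get a finite $L^\infty$ bound on $(0,1]$ with the right singularity. Only then would I apply the large-time bootstrap. A secondary delicate point is the exponent bookkeeping: $q_0,a'>n$ keep the kernels integrable, while $a'\le np/(n-p)$ and $p\le n$ make both regimes close. Both constraints use exactly the hypothesis $n/2<p\le n$.
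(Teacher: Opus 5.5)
Your argument is correct, but it follows a different route from the paper's.

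\textbf{The paper's route.} The paper never restarts the Duhamel formula at $t/2$ and never bootstraps. Working from $s=0$, it first uses the $L^n\to L^q$ gradient smoothing, $\|v(s)\|_{2p}^2\le Cs^{-\frac n{2p}}\|v_0\|_p^2$ and $\nabla K\in L^{q_1}$ to extend the decay to every $n<q<\infty$; this is \eqref{decay;mur}. Then, again from $s=0$, it reaches $q=\infty$ in two cases. For $p<n$ it uses $L^{2p}\to L^\infty$ smoothing with $\|\nabla K*v\|_\infty\le\|\nabla K\|_{q_1}\|v\|_{\frac{np}{n-p}}$. For $p=n$ it uses $L^{2n}\to L^\infty$ smoothing with $\|v\|_{4n}^2$ and $\nabla K\in L^1$. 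In both cases the extended range supplies the needed norms. Every bound is linear in already-known quantities, so nothing is absorbed and the estimate holds on the whole ball $\|v_0\|_p<\varepsilon_*$. The condition $\varepsilon_*\le1$ is used only to write $\|v_0\|_p^2\le\|v_0\|_p$.

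\textbf{What your route buys and costs.} Your scheme skips the intermediate extension and treats $p<n$ and $p=n$ at once, but it pays in the absorption step.

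First, the term $C\|v_0\|_pM(T)$ forces a further shrinking of $\varepsilon_*$. This is admissible under the paper's convention, because the new threshold depends only on $n,\beta_1,\beta_2,\lambda_1,\lambda_2,A,p$. Still, it is a restriction the paper does not need. For $p<n$ you could avoid it by splitting at a large time $T_0$ instead of $1$, since the factor $t^{\frac12-\frac n{2p}}$ then becomes small. For $p=n$ that factor is identically $1$, so smallness is genuinely used.

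Second, a finite bound on $(0,1]$ does not by itself give $M(T)<\infty$, because $M(T)$ also runs over $[1,T]$. What actually closes this is that your non-bootstrap estimate, with $\nabla K\in L^a$ and $a'\in(n,2p]$, holds for every $t>0$:
\[
t^{\frac n{2p}}\|v(t)\|_\infty\le C\|v_0\|_p+C\|v_0\|_p^2\,t^{\frac12-\frac n{2p}+\frac n{2a'}} .
\]
This is bounded on $(0,T]$ for each finite $T$, and you should say so explicitly.

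Finally, since $p>n/2$ gives $\frac{np}{n-p}>2p$, your condition $a'\le\frac{np}{n-p}$ holds automatically once $a'\le 2p$.
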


\begin{proof}
Let $n\ge 2$, $n/2<p\le n$ and $p\le q\le \infty$. 
Then, taking the $L^q$-norm in \eqref{eq;ieqv} and using 
\eqref{est;lrlp}, 
we have 
\begin{equation}\label{no;vr}
\begin{aligned}
	\|
		v(t)
	\|_q
	\le \, &
	\|
		\mathrm{e}^{t\mathcal{L}_A}v_0
	\|_q
	+
	\|
		I(t)
	\|_q\\
	\le \, &
	C
	t^{-\frac{n}2(\frac1p-\frac1q)}
	\|
		v_0
	\|_p
	+
	\|
		I(t)
	\|_q, \qquad t>0,
\end{aligned}
\end{equation}  
where the function $I(t)$ is the one defined by \eqref{eq;II}.  

We first prove that 
 \begin{equation}\label{decay;mur}
    \|
		v(t)
	\|_q
	\le 
	C
	t^{-\frac{n}2(\frac1p-\frac1q)}
	\big(
		\| v_0 \|_p
		+
		\| v_0 \|_p^2
	\big)
\end{equation}
for all $t>0$ and $n<q<\infty$. 
It follows from \eqref{est;nlrlp} and H\"{o}lder's inequality
that 
\begin{equation}\label{decay;Ir}
\begin{aligned}
	\|
		I(t)
	\|_q 
	\le \, &
	\int_0^t
		\left\|
			\nabla 
			\cdot 
			\mathrm{e}^{(t-s)\mathcal{L}_A}(v \nabla K*v)(s)
         \right\|_q
	\, \mathrm{d}s \\
	\le \, &
	C
	\int_0^t
		(t-s)^{-\frac{n}2(\frac1n-\frac1q)-\frac12}
		\|
			v(s)
            \nabla K*v(s)
		\|_n
	\, \mathrm{d}s \\
	\le \, &
	C
	\int_0^t
		(t-s)^{-1+\frac{n}{2q}}
		\|
			v(s)
		\|_{2p}
		\|
			\nabla K*v(s)
		\|_{\frac{2np}{2p-n}}
	\, \mathrm{d}s 
\end{aligned}
\end{equation}
for all $t>0$. Here, take $q_1$ as 
\begin{equation}\label{exp;r1}
	q_1
	=
	\frac{np}{(n+1)p-n}
	\in 
	\bigg[
		1,\frac{n}{n-1}
	\bigg), 
\end{equation}
which satisfies 
$
	(2p-n)/(2np)
	=
	1/q_1
	+
	1/(2p)
	-
	1
$. 
Using Young's inequality and Lemma \ref{lem;bessel}, 
we obtain  
\begin{equation}\label{est;nt0}
		\|
              \nabla K*v(s)
		\|_{\frac{2np}{2p-n}}
		\le 
		\|
            \nabla K
		\|_{q_1}
		\|
			v(s)
		\|_{2p}\\
		\le 
		C
		\|
			v(s)
		\|_{2p}, \qquad s>0.
\end{equation}
Hence,  
we substitute \eqref{est;nt0} into \eqref{decay;Ir} and apply \eqref{dEcay;v0} with $q=2p$ to obtain 
\begin{align*}
	\|
		I(t)
	\|_q 
	\le \, &
	C
	\int_0^t
		(t-s)^{-1+\frac{n}{2q}}
		\|
			v(s)
		\|_{2p}^2
	\, \mathrm{d}s \\
	\le \, &
	C
	\|
		v_0
	\|_p^2
	\int_0^t
		(t-s)^{-1+\frac{n}{2q}}
		s^{-\frac{n}{2p}}
	\, \mathrm{d}s\\
	\le \, &
	C
	t^{-\frac{n}2(\frac1p-\frac1q)}
	\|
		v_0
	\|_p^2
	, \qquad t>0.
\end{align*}
This together with \eqref{no;vr} gives decay estimate \eqref{decay;mur}.

We next prove decay estimate \eqref{decay;v2} with $q = \infty$ in the case  
$n/2<p<n$. Noting that 
$
	n
	<
	np/(n-p)
	<
	\infty
$ 
if $n/2<p<n$ and then using \eqref{est;nlrlp}, \eqref{decay;mur}, Lemma \ref{lem;bessel}, 
H\"{o}lder's inequality and Young's inequality, 
we get 
\begin{equation}\label{decay;nIip}
	\begin{aligned}
		\|
			I(t)
		\|_\infty
		\le \, & 
		C
		\int_0^t
			(t-s)^{-\frac{n}{4p}-\frac12}
			\|
				v(s)
                \nabla K*v(s)
			\|_{2p}
		\, \mathrm{d}s \\
		\le \, & 
		C
		\int_0^{t}
			(t-s)^{-\frac{n}{4p}-\frac12}
			\|
				v(s)
			\|_{2p}
			\|
            	\nabla K*v(s)
			\|_{\infty}
		\, \mathrm{d}s \\
		\le \, & 
		C
		\int_0^{t}
			(t-s)^{-\frac{n}{4p}-\frac12}
			\|
				v(s)
			\|_{2p}
			\|
				\nabla K 
			\|_{q_1}
			\|
				v(s)
			\|_{\frac{np}{n-p}}
		\, \mathrm{d}s \\
		\le \, & 
		C
         (\|v_0\|_p+\|v_0\|_p^2)^2
		\int_0^{t}
			(t-s)^{-\frac{n}{4p}-\frac12}
			s^{-\frac{n}{4p}-\frac12}
		\, \mathrm{d}s\\
		\le \, & 
		C
		t^{-\frac{n}{2p}}
		(
			\|
				v_0
			\|_p
			+
			\|
				v_0
			\|_p^2
		)^2
		, \quad t>0, 
\end{aligned}
\end{equation}
where $q_1$ is the exponent given as \eqref{exp;r1} and 
$
	1 < q_1 < {n}/({n-1})
$
for $n/2 < p < n$.
Therefore, decay estimate \eqref{decay;v2} with $q=\infty$ in the case $n/2<p<n$ 
follows from \eqref{no;vr} with $q=\infty$ and \eqref{decay;nIip}. 

It remains to consider the case $p=n$ of \eqref{decay;v2} with $q=\infty$. 
Using \eqref{est;nlrlp}, \eqref{decay;mur} with $q=4n$, Lemma \ref{lem;bessel},
the Cauchy--Schwarz inequality, and Young's inequality, we obtain 
\begin{equation}\label{decay;nIipn}
	\begin{aligned}
		\|
			I(t)
		\|_{\infty}
		\le \, &
		\int_0^{t}
			\left\|
				\nabla 
				\cdot 
				\mathrm{e}^{(t-s)\mathcal{L}_A}
				(
					v
					\nabla K*v
				)(s)
			\right\|_\infty
		\, \mathrm{d}s \\
		\le \, & 
		C
		\int_0^{t}
			(t-s)^{-\frac34}
			\|
				v(s)
				\nabla K*v(s)
			\|_{2n}
		\, \mathrm{d}s \\
		\le \, & 
		C
		\int_0^{t}
			(t-s)^{-\frac34}
			\|
				v(s)
			\|_{4n}
			\|
				\nabla K*v(s)
			\|_{4n}
		\, \mathrm{d}s \\
		\le \, & 
		C
		\int_0^{t}
			(t-s)^{-\frac34}
			\|
				\nabla K
			\|_1
			\|
				v(s)
			\|_{4n}^2
		\, \mathrm{d}s \\
		\le \, & 
		C
		(
			\|v_0\|_{n}
			+
			\|v_0\|_{n}^2
		)^2
		\int_0^{t}
			(t-s)^{-\frac34}
			s^{-\frac34}
		\, \mathrm{d}s\\
		\le \, & 
		C
		t^{-\frac12}
		(
			\|v_0\|_{n}
			+
			\|v_0\|_{n}^2
		)^2
		, \qquad t>0.
\end{aligned}
\end{equation}
Thus, by \eqref{no;vr} with $q=\infty$ and \eqref{decay;nIipn}, we have decay estimate \eqref{decay;v2} with $q=\infty$
in the case $p=n$. 
As a consequence, 
applying the $L^p$-bound in \eqref{dEcay;v0}, \eqref{decay;v2} with $q=\infty$ and 
$
	\|
		f
	\|_q
	\le 
	\|
		f
	\|_p^{p/q}
	\|
		f
	\|_\infty^{1-p/q}
$ 
for 
$p<q<\infty$, we find that the desired estimate \eqref{decay;v2} holds
for all $p \le q \le \infty$, 
and the proof of Lemma \ref{lem;idecayv2} is complete.
\end{proof}

\begin{proof}[{\bf Proof of Theorem \ref{t1}}]
As stated in the introduction, it is enough to show the following decay estimates: 
\begin{align}
\label{decay;va} 
	\sup_{t>0}
		t^{\frac{n}2(\frac1p-\frac1q)}
		\|
			v(t)
		\|_q
	\le
	C
	\|
		v_0
	\|_p,\\
\label{asy;va} 
		\sup_{t>0}
		t^{\frac{n}2(\frac1p-\frac1q)+\frac{n}2(\frac1p-\frac1n)}
		\|
			v(t)-\mathrm{e}^{t\mathcal{L}_A}v_0
		\|_q
		\le
		C'
		\|
			v_0
		\|_p
\end{align}
for all $p \le q \le \infty$, 
where $C, C' > 0$ are independent of $v_0$. 
By Lemmas \ref{lem;idecayv1} and \ref{lem;idecayv2},  \eqref{decay;va} holds.
It remains to prove \eqref{asy;va}. 
By \eqref{eq;ieqv},  
\begin{equation}\label{eq;I}
	v(t)
	-
	\mathrm{e}^{t\mathcal{L}_A}v_0
	=
	-
	I_1(t)
	-
	I_2(t),
\end{equation} 
where 
\begin{align}
	\label{eq;I1}
	I_1(t) 
	\coloneqq \, &
	\int_{0}^{\frac{t}{2}}
		\nabla
		\cdot 
		\mathrm{e}^{(t-s)\mathcal{L}_A}(v\nabla K*v)(s)
	\, \mathrm{d}s, \\
	\label{eq;I2}
	I_2(t)
	\coloneqq \, &
	\int_{\frac{t}{2}}^t
		\nabla
		\cdot 
		\mathrm{e}^{(t-s)\mathcal{L}_A}(v\nabla K*v)(s)
	\, \mathrm{d}s.
\end{align}
Noting that $1-n/(2p)>0$, 
we observe from \eqref{est;nlrlp}, \eqref{decay;va}, Lemma \ref{lem;bessel} and Young's inequality that 
\begin{equation}\label{decay;I1}
	\begin{aligned}
		\|
			I_1(t)
		\|_q
		\le \, &
		C
		\int_0^{\frac{t}{2}}
			(t-s)^{-\frac{n}2(\frac1p-\frac1q)-\frac12}
			\|
				v(s)
				\nabla K*v(s)
			\|_p
		\, \mathrm{d}s \\
		\le \, &
		C
		t^{-\frac{n}2(\frac1p-\frac1q)-\frac12}
		\int_0^{\frac{t}{2}}
			\|
				v(s)
			\|_p
			\|
				\nabla 
				K*v(s)
			\|_\infty
		\, \mathrm{d}s \\
		\le \, &
		C
		t^{-\frac{n}2(\frac1p-\frac1q)-\frac12}
		\int_0^{\frac{t}{2}}
			\|
				v(s)
			\|_p
			\|
				\nabla K
			\|_1
			\|
				v(s)
			\|_\infty
		\, \mathrm{d}s \\
		\le \, &
		C
		t^{-\frac{n}2(\frac1p-\frac1q)-\frac12}
            \|
				v_0
			\|_p^2
		\int_0^{\frac{t}{2}}
			s^{-\frac{n}{2p}}
		\, \mathrm{d}s \\
		\le \, &
		C
		t^{-\frac{n}2(\frac1p-\frac1q)-\frac{n}2(\frac1p-\frac1n)}
		\|
			v_0
		\|_p^2, \qquad t>0.
	\end{aligned}
\end{equation}
Similarly, 
\begin{equation}\label{decay;I2}
	\begin{aligned}
		\|
			I_2(t)
		\|_q
		\le \, &
		C
		\int_{\frac{t}{2}}^t
			(t-s)^{-\frac12}
			\|
				v(s)
				\nabla 
				K*v(s)
			\|_q
		\, \mathrm{d}s \\
		\le \, &
		C
		\int_{\frac{t}{2}}^t
			(t-s)^{-\frac12}
			\|
				v(s)
			\|_q
			\|
				\nabla 
				K
				*
				v(s)
			\|_\infty
		\, \mathrm{d}s \\
		\le \, &
		C
		\int_{\frac{t}{2}}^t
			(t-s)^{-\frac12}
			\|
				v(s)
			\|_q
			\|
				\nabla K
			\|_1
			\|
					v(s)
			\|_\infty
			\, \mathrm{d}s \\
		\le \, &
		C
			\|
				v_0
			\|_p^2
		\int_{t/2}^t
			(t-s)^{-\frac12}
			s^{-\frac{n}2(\frac1p-\frac1q)-\frac{n}{2p}}
		\, \mathrm{d}s \\
		\le \, &
		C
		t^{-\frac{n}2(\frac1p-\frac1q)-\frac{n}2(\frac1p-\frac1n)}
		\|
				v_0
		\|_p^2, \qquad t>0.
\end{aligned}
\end{equation}
Combining the estimates \eqref{decay;I1} and \eqref{decay;I2}, together with the relation
$
	v(t)
	-
	\mathrm{e}^{t\mathcal{L}_A}v_0
	=
	-
	I_1(t)
	-
	I_2(t),
$
yields the desired estimate \eqref{asy;va}. This completes the proof of Theorem \ref{t1}.
\end{proof}

\section{Proof of Theorem \ref{t2}}
Let $A_{\ast}$ be defined by \eqref{const;as}.
Then we recall that the analytic semigroup $\mathrm{e}^{t\mathcal{L}_A}f$ is written as  
\begin{equation}\label{semi;La}
	\mathrm{e}^{t\mathcal{L}_A}
	f
	=
	\mathcal{F}^{-1}
	\big[
		\mathrm{e}^{th_A}\mathcal{F}[f]
	\big], \qquad f\in \mathcal{S}.
\end{equation}
By \cite[Lemma 6.1]{WY}, 
the function $h_A$ defined by \eqref{fn;ha} has the following estimate:
\begin{lem}[Lemma~6.1 in~\cite{WY}]\label{lem;esha0}
Assume 
$
	0
	<
	A
	< 
	A_\ast
$. 
Then we have 
\begin{equation}\label{es;ha0}
	h_A(\xi)
	\le 
	-c_{\ast\ast}|\xi|^2
\end{equation}
for all $\xi \in \mathbb{R}^n$, 
where $c_{\ast\ast}$ is a positive constant defined by 
\begin{equation}\label{const;cs}
	c_{\ast\ast}
	=
	\left\{
		\begin{aligned}
			&1, 
			&&\qquad
			\dfrac{\beta_1}{\beta_2} \le 1, \quad
			\dfrac{\beta_1}{\beta_2} \le \dfrac{\lambda_1}{\lambda_2},\\
			&
			\dfrac{\beta_1\lambda_2-\beta_2\lambda_1}{\lambda_1\lambda_2} 
			\left( 
				\dfrac{\lambda_1\lambda_2}{\beta_1\lambda_2-\beta_2\lambda_1}
				-
				A
			\right), 
			&& \qquad 
			\dfrac{\beta_1}{\beta_2} > \dfrac{\lambda_1}{\lambda_2}, \quad
			\dfrac{\beta_1}{\beta_2} \ge \left( \dfrac{\lambda_1}{\lambda_2} \right)^2,\\
			&
			\dfrac{(\sqrt{\beta_1}-\sqrt{\beta_2})^2}{\lambda_1-\lambda_2}
				\left(
					\dfrac{\lambda_1-\lambda_2}{(\sqrt{\beta_1}-\sqrt{\beta_2})^2}
					-
					A
				\right), 
			&&\qquad 
			\dfrac{\beta_1}{\beta_2} < \left(\dfrac{\lambda_1}{\lambda_2}\right)^2, \quad
			\dfrac{\beta_1}{\beta_2} > 1.
		\end{aligned}
	\right.
\end{equation}
\end{lem}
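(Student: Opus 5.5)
Since the statement is Lemma~6.1 of \cite{WY}, the plan is to verify it directly as an elementary inequality in the single variable $r=|\xi|^2\ge0$. Writing
\[
h_A(\xi)=-r\,g(r),\qquad
g(r)\coloneqq 1-\frac{\beta_1A}{\lambda_1+r}+\frac{\beta_2A}{\lambda_2+r},
\]
the claim $h_A(\xi)\le -c_{\ast\ast}|\xi|^2$ is equivalent to $\inf_{r\ge0}g(r)\ge c_{\ast\ast}$. I would therefore compute $\inf_{r\ge 0} g(r)$ explicitly in each of the three parameter regimes of \eqref{const;as}, and check that this infimum equals, or dominates, the corresponding constant in \eqref{const;cs}, which is positive exactly when $A<A_\ast$.

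The key steps follow the shape of $g$ on $[0,\infty)$. We have $g(r)\to1$ as $r\to\infty$ and $g(0)=1-A(\beta_1\lambda_2-\beta_2\lambda_1)/(\lambda_1\lambda_2)=c_\ast$, and
\[
g'(r)=\frac{\beta_1A}{(\lambda_1+r)^2}-\frac{\beta_2A}{(\lambda_2+r)^2}.
\]
Hence $g'(r)\ge0$ iff $\sqrt{\beta_1}(\lambda_2+r)\ge\sqrt{\beta_2}(\lambda_1+r)$. This is a linear condition in $r$, so $g$ has at most one critical point $r_0=(\sqrt{\beta_2}\lambda_1-\sqrt{\beta_1}\lambda_2)/(\sqrt{\beta_1}-\sqrt{\beta_2})$, and that point is a minimum when it lies in $(0,\infty)$ and $\beta_1>\beta_2$.

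The three cases then go as follows.
\begin{itemize}
\item \emph{Case 1.} Take $\beta_1\le\beta_2$ and $\beta_1\lambda_2\le\beta_2\lambda_1$. Then $\beta_1/(\lambda_1+r)\le\beta_2/(\lambda_2+r)$ for all $r\ge0$: this is $\beta_1\lambda_2-\beta_2\lambda_1+(\beta_1-\beta_2)r\le0$. Hence $g\ge1$.
\item \emph{Case 2.} Take $\beta_1/\beta_2\ge(\lambda_1/\lambda_2)^2$. Then $\sqrt{\beta_1}\lambda_2\ge\sqrt{\beta_2}\lambda_1$, and together with $\beta_1\ge\beta_2$ or the sign analysis this gives $g'\ge0$ on $[0,\infty)$. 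So the infimum is $g(0)=c_\ast$, which matches the second line of \eqref{const;cs}. A subcase check is needed when $\beta_1<\beta_2$; there $g'$ changes sign from $+$ to $-$, so the infimum is $\min\{g(0),1\}=g(0)$ because $\beta_1\lambda_2>\beta_2\lambda_1$.
\item \emph{Case 3.} Take $1<\beta_1/\beta_2<(\lambda_1/\lambda_2)^2$. Then $r_0>0$ is the interior minimizer. Substituting, $\lambda_1+r_0=\sqrt{\beta_1}(\lambda_1-\lambda_2)/(\sqrt{\beta_1}-\sqrt{\beta_2})$ and similarly $\lambda_2+r_0=\sqrt{\beta_2}(\lambda_1-\lambda_2)/(\sqrt{\beta_1}-\sqrt{\beta_2})$. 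Therefore
\[
g(r_0)=1-\frac{A(\sqrt{\beta_1}-\sqrt{\beta_2})^2}{\lambda_1-\lambda_2},
\]
which is the third line of \eqref{const;cs}. Here $\lambda_1>\lambda_2$ is automatic from $\beta_1/\beta_2>1$ and $\beta_1/\beta_2<(\lambda_1/\lambda_2)^2$.
\end{itemize}

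The main obstacle is bookkeeping rather than analysis. I expect the delicate part to be confirming that the three regimes in \eqref{const;as} exhaust all parameters, and in particular handling the boundary subcases of Case~2, where $\beta_1\le\beta_2$ may occur and the monotonicity of $g$ must be argued via the sign of $\beta_1\lambda_2-\beta_2\lambda_1$ rather than via $r_0$. In each case positivity of $c_{\ast\ast}$ is then immediate from $A<A_\ast$.
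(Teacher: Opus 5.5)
Your argument is correct. The paper gives no proof of this lemma; it quotes it from Lemma~6.1 of \cite{WY}. Your reduction to the one-variable function $g(r)=1-\beta_1A/(\lambda_1+r)+\beta_2A/(\lambda_2+r)$, whose derivative changes sign at most once, is therefore a self-contained substitute for that citation.

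It also proves slightly more than the statement. In each regime you obtain $\inf_{r>0}g(r)$ exactly: it is $1$, $g(0)=c_\ast$ and $g(r_0)$, respectively. So $c_{\ast\ast}$ is the optimal constant in \eqref{es;ha0}, and $A_\ast$ is precisely the value of $A$ at which this infimum reaches $0$. The inequality $c_{\ast\ast}\le c_\ast$ in \eqref{est;cscss}, which the paper checks case by case in a remark, becomes immediate: $c_\ast=g(0)\ge\inf g=c_{\ast\ast}$. The inequality is strict in the third regime, because $g$ is strictly decreasing on $[0,r_0]$.

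Two small points of presentation.

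In Case~2 you assert $g'\ge0$ on all of $[0,\infty)$, but this holds only when $\beta_1\ge\beta_2$. The subcase $\beta_1<\beta_2$ does occur in this regime, e.g.\ $\lambda_1/\lambda_2=1/2$, $\beta_1/\beta_2=3/5$. There $g$ rises on $[0,r_0]$ and then decreases to $1$. Your repair $\inf g=\min\{c_\ast,1\}=c_\ast$, using $\beta_1\lambda_2>\beta_2\lambda_1$, is the right one, so state it that way from the start.

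Exhaustiveness of the regimes is not needed for the lemma, since each line of \eqref{const;cs} is proved separately, but it does hold. Write $\rho=\beta_1/\beta_2$ and $\mu=\lambda_1/\lambda_2$. If the first two regimes fail, then ($\rho>1$ or $\rho>\mu$) and ($\rho\le\mu$ or $\rho<\mu^2$). Every combination yields $1<\rho<\mu^2$: $\mu<\rho<\mu^2$ forces $\mu>1$, and $1<\rho\le\mu$ forces $\rho<\mu^2$. The three regimes are also pairwise disjoint.
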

We recall 
\[
	c_{\ast}
	\coloneqq 
	1
	-
	\frac{\beta_1\lambda_2-\beta_2\lambda_1}{\lambda_1\lambda_2}
	A
	>
	0
\]
when $0 < A < A_*$.
Then, using
$
	(2\pi)^{-n/2}
	\mathcal{F}^{-1}
	\big[
		\mathrm{e}^{-c_{\ast} t|\cdot|^2}
	\big](\cdot)
	=
	G(c_{\ast} t,\cdot)
$ 
for
$t>0$, 
we can rewrite $\mathrm{e}^{t\mathcal{L}_A}f$ as follows:
\begin{equation}
	\begin{aligned}
	\mathrm{e}^{t\mathcal{L}_A}f
	= \, &
	\mathcal{F}^{-1}
	\big[
		\mathrm{e}^{-c_{\ast}t|\cdot|^2}\mathcal{F}[f]
	\big]
	+
	\mathcal{F}^{-1}
	\big[
		(\mathrm{e}^{th_A}-\mathrm{e}^{-c_{\ast}t|\cdot|^2})
		\mathcal{F}[f]
	\big] \\
	= \, &
	\mathrm{e}^{c_{\ast}t\Delta}f+\mathcal{K}_A(t)*f, 
\end{aligned}
\end{equation}
where $\mathrm{e}^{c_{\ast}t\Delta}$ is the standard heat semigroup, and 
\begin{equation}\label{fn;hka}
	\mathcal{K}_A(t,x)
	\coloneqq 
	(2\pi)^{-\frac{n}{2}}
	\mathcal{F}^{-1}
	\big[
		\mathrm{e}^{th_A}-\mathrm{e}^{-c_{\ast}t|\cdot|^2}
	\big](x).
\end{equation}
In what follows, 
we first derive pointwise estimates  
for the Fourier multiplier in \eqref{fn;hka}. 

\subsection{Pointwise estimates in Fourier space}
\begin{lem}\label{lem;esha1}
Let $0<A<A_\ast$. 
Then there exists 
$
	C
	=
	C(n,\beta_1,\beta_2,\lambda_1,\lambda_2,A)
	>
	0
$
such that
\begin{equation}\label{es;hacss}
	|
		\mathcal{F}[\mathcal{K}_A(t)](\xi)
	|
	\le 
	C
	t
	|\xi|^4
	\mathrm{e}^{-c_{\ast\ast} t|\xi|^2}
\end{equation}
holds for all $t>0$ and all $\xi \in \mathbb{R}^n$,
where 
$c_{\ast\ast}>0$ is defined by \eqref{const;cs}, 
and $\mathcal{K}_A(t,\cdot)$ is defined by \eqref{fn;hka}. 
\end{lem}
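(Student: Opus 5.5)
Since $\mathcal{F}[\mathcal{K}_A(t)](\xi)=(2\pi)^{-n/2}\big(\mathrm{e}^{th_A(\xi)}-\mathrm{e}^{-c_\ast t|\xi|^2}\big)$, it suffices to bound $|\mathrm{e}^{th_A(\xi)}-\mathrm{e}^{-c_\ast t|\xi|^2}|$ pointwise. I would write the difference of exponentials as an integral along the segment joining the two exponents:
\[
	\mathrm{e}^{th_A(\xi)}-\mathrm{e}^{-c_\ast t|\xi|^2}
	=
	t\big(h_A(\xi)+c_\ast|\xi|^2\big)\int_0^1 \mathrm{e}^{t(\theta h_A(\xi)-(1-\theta)c_\ast|\xi|^2)}\,\mathrm{d}\theta .
\]
The proof then has two ingredients: an algebraic bound $|h_A(\xi)+c_\ast|\xi|^2|\le C|\xi|^4$ for all $\xi$, and a uniform Gaussian bound for the integrand.

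For the first ingredient, I would compute directly from \eqref{fn;ha}, using $\frac{|\xi|^2}{\lambda_j+|\xi|^2}=\frac{|\xi|^2}{\lambda_j}-\frac{|\xi|^4}{\lambda_j(\lambda_j+|\xi|^2)}$. This gives
\[
	h_A(\xi)+c_\ast|\xi|^2
	=
	-\frac{\beta_1A|\xi|^4}{\lambda_1(\lambda_1+|\xi|^2)}
	+\frac{\beta_2A|\xi|^4}{\lambda_2(\lambda_2+|\xi|^2)} ,
\]
because the $|\xi|^2$ terms cancel exactly by the definition \eqref{const;ca} of $c_\ast$. Each fraction $1/(\lambda_j+|\xi|^2)$ is at most $1/\lambda_j$. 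Hence
\[
	|h_A(\xi)+c_\ast|\xi|^2|\le A\Big(\frac{\beta_1}{\lambda_1^2}+\frac{\beta_2}{\lambda_2^2}\Big)|\xi|^4
\]
for every $\xi\in\mathbb{R}^n$, with no low/high frequency splitting needed.

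For the second ingredient, Lemma~\ref{lem;esha0} gives $h_A(\xi)\le -c_{\ast\ast}|\xi|^2$. The exponent is therefore at most $-t|\xi|^2(\theta c_{\ast\ast}+(1-\theta)c_\ast)\le -t\min\{c_\ast,c_{\ast\ast}\}|\xi|^2$.

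The main obstacle is obtaining $c_{\ast\ast}$ itself in the exponent, as the statement requires, rather than $\min\{c_\ast,c_{\ast\ast}\}$. So I would check that $c_\ast\ge c_{\ast\ast}$ in each case of \eqref{const;cs}. Near $\xi=0$ one has $h_A(\xi)=-c_\ast|\xi|^2+O(|\xi|^4)$, and \eqref{es;ha0} forces $-c_\ast\le -c_{\ast\ast}$, i.e.\ $c_{\ast\ast}\le c_\ast$. This follows by dividing \eqref{es;ha0} by $|\xi|^2$ and letting $\xi\to0$, with no case inspection needed. Then $\theta c_{\ast\ast}+(1-\theta)c_\ast\ge c_{\ast\ast}$, and the integrand is bounded by $\mathrm{e}^{-c_{\ast\ast}t|\xi|^2}$. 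Multiplying the two bounds yields \eqref{es;hacss} with $C=(2\pi)^{-n/2}A(\beta_1\lambda_1^{-2}+\beta_2\lambda_2^{-2})$.
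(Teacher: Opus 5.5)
Your proof is correct and follows the paper's argument. It uses the same integral form of the mean value theorem, the same decomposition $h_A=-c_\ast|\xi|^2+H_A$ with $|H_A(\xi)|\le C|\xi|^4$, and Lemma~\ref{lem;esha0} together with $c_{\ast\ast}\le c_\ast$. The one difference is that you get $c_{\ast\ast}\le c_\ast$ by dividing \eqref{es;ha0} by $|\xi|^2$ and letting $\xi\to0$, whereas the paper checks it case by case in the remark before the proof. Your argument is shorter and does not depend on the explicit formula \eqref{const;cs}.
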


\begin{rem}
Let $A_\ast$ be defined by \eqref{const;as}, and let $c_{\ast}, c_{\ast\ast} > 0 $ be defined by \eqref{const;ca} and \eqref{const;cs}, respectively. 
If 
$
	0
	<
	A
	<
	A_\ast
$, 
then 
\begin{equation}\label{est;cscss}
	c_{\ast\ast}
	\le 
	c_{\ast}. 
\end{equation}
Indeed,
\[
	c_{\ast}
	-
	c_{\ast\ast}
	=
	-\frac{\beta_1\lambda_2-\beta_2\lambda_1}{\lambda_1\lambda_2}A
	=
	\frac{\beta_2}{\lambda_1}\left(\frac{\lambda_1}{\lambda_2}
	-
	\frac{\beta_1}{\beta_2}\right)A
	\ge 0
\]
if $\beta_1/\beta_2\le 1$, $\beta_1/\beta_2\le \lambda_1/\lambda_2$. 
Also, we obtain $c_{\ast}=c_{\ast\ast}$ if $\beta_1/\beta_2>\lambda_1/\lambda_2$, $\beta_1/\beta_2\ge (\lambda_1/\lambda_2)^2$. 
In addition, note that 
\[
	\dfrac{(\sqrt{\beta_1}-\sqrt{\beta_2})^2}{\lambda_1-\lambda_2}
	>
	\frac{\beta_1\lambda_2-\beta_2\lambda_1}{\lambda_1\lambda_2}
\]
when $\beta_1/\beta_2<(\lambda_1/\lambda_2)^2$, $\beta_1/\beta_2>1$. Then we have 
\[
	c_{\ast}-c_{\ast\ast}
	=
	\left[
		\dfrac{(\sqrt{\beta_1}-\sqrt{\beta_2})^2}{\lambda_1-\lambda_2}
		-
		\frac{\beta_1\lambda_2-\beta_2\lambda_1}{\lambda_1\lambda_2}
	\right]
	A
	>
	0.
\]
Hence \eqref{est;cscss} follows.
\end{rem}

\begin{proof}[{\bf Proof of Lemma \ref{lem;esha1}}]
The function $h_A(\xi)$ given by \eqref{fn;ha} is represented as
\begin{equation}\label{fn;ha1}
	h_A(\xi)
	=
	-c_{\ast}
	|\xi|^2
	+
	H_A(\xi), \qquad \xi \in \mathbb{R}^n,
\end{equation}
where the constant $c_{\ast}>0$ is defined by \eqref{const;ca}, and 
\begin{equation}
	\label{fn;Ha}
	H_A(\xi)
	\coloneqq 
	A
	\left[
		\frac{\beta_2}{\lambda_2(\lambda_2+|\xi|^2)}
		-
		\frac{\beta_1}{\lambda_1(\lambda_1+|\xi|^2)}
	\right]
	|\xi|^4, \qquad \xi \in \mathbb{R}^n.
\end{equation}
By the mean value theorem, \eqref{es;ha0}, \eqref{est;cscss}, \eqref{fn;ha1} and 
$
	|
		H_A(\xi)
	|
	\le 
	C
	|\xi|^4
$, we have 
\begin{align*}
	|\mathcal{F}[\mathcal{K}_A(t)](\xi)|
	\le \, &
	C
	t
	\big|
		h_A(\xi)
		+
		c_{\ast}
		|\xi|^2
	\big|
	\int_0^1
		\mathrm{e}^{-c_{\ast}t|\xi|^2+\theta t(h_A(\xi)+c_{\ast}|\xi|^2)}
	\, \mathrm{d}\theta \\
	\le \, &
	C
	t
	|H_A(\xi)|
	\int_0^1
		\mathrm{e}^{-t(\theta c_{\ast\ast} +(1-\theta)c_{\ast})|\xi|^2}
	\, \mathrm{d}\theta\\
	\le \, &
	C
	t
	|\xi|^4\mathrm{e}^{-c_{\ast\ast} t|\xi|^2}
\end{align*}
for all $t>0$ and all $\xi \in \mathbb{R}^n$. 
Hence the proof of Lemma \ref{lem;esha1} is complete.
\end{proof}

\begin{lem}\label{lem;key}
Let $\mathcal{K}_{A}(t,x)$ be the function defined by \eqref{fn;hka} and $\alpha \in \mathbb{Z}_{+}^n$ satisfy $|\alpha|\ge 1$.
Assume $0<A<A_\ast$. 
Then there exists $C=C(n,\beta_1,\beta_2,\lambda_1,\lambda_2, A, \alpha) > 0$ such that,
for every $t \ge 1$ and $\xi\in \mathbb{R}^n \setminus \{ 0 \}$,
\begin{equation}\label{est;key}
\begin{aligned}
	\, &
	\big|
		\partial_\xi^{\alpha}(\mathcal{F}[\mathcal{K}_A(t)](\xi))
	\big|\\
	\le \, &
	C
	\mathrm{e}^{-c_{\ast\ast} t|\xi|^2}
	\left[
		\sum_{\frac{|\alpha|}{2}\le l\le |\alpha|}
			t^{l+1}
			|\xi|^{2l+4-|\alpha|}
		+
		\sum_{\substack{\alpha_1+\alpha_2=\alpha, \\ |\alpha_2|\ge 1}}
		\sum_{\substack{\frac{|\alpha_1|}{2} \le l\le |\alpha_1|, \\ 1\le m\le |\alpha_2|}}
			t^{l+m}
			|\xi|^{2l+4m-|\alpha|}
	\right]
\end{aligned}
\end{equation}
holds,
where 
$c_{\ast\ast}>0$
is given by \eqref{const;cs}. 
\end{lem}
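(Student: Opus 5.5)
We write $\mathcal{F}[\mathcal{K}_A(t)](\xi)=\mathrm{e}^{th_A(\xi)}-\mathrm{e}^{-c_\ast t|\xi|^2}=\mathrm{e}^{-c_\ast t|\xi|^2}\big(\mathrm{e}^{tH_A(\xi)}-1\big)$ using \eqref{fn;ha1}. Equivalently, $\mathcal{F}[\mathcal{K}_A(t)]=\mathrm{e}^{-c_\ast t|\xi|^2}\,\Phi(t,\xi)$, where
\[
\Phi(t,\xi)\coloneqq \mathrm{e}^{tH_A(\xi)}-1=tH_A(\xi)\int_0^1\mathrm{e}^{\theta tH_A(\xi)}\,\mathrm{d}\theta .
\]
We differentiate this product by the Leibniz rule, $\partial_\xi^\alpha=\sum_{\alpha_1+\alpha_2=\alpha}\binom{\alpha}{\alpha_1}\partial^{\alpha_1}(\cdot)\,\partial^{\alpha_2}(\cdot)$. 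We use the splitting in a slightly different form: $\partial^{\alpha_1}$ falls on the Gaussian factor $\mathrm{e}^{-c_\ast t|\xi|^2}$ and $\partial^{\alpha_2}$ on $\Phi$. The term with $\alpha_2=0$ will produce the first sum in \eqref{est;key}, and the terms with $|\alpha_2|\ge1$ the double sum.

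\textbf{Step 1 (Gaussian factor).} By induction on $|\alpha_1|$, $\partial_\xi^{\alpha_1}\mathrm{e}^{-c_\ast t|\xi|^2}$ is $\mathrm{e}^{-c_\ast t|\xi|^2}$ times a polynomial whose monomials are of the form $t^l\xi^\gamma$ with $|\gamma|=2l-|\alpha_1|$ and $|\alpha_1|/2\le l\le|\alpha_1|$. Hence
\[
|\partial^{\alpha_1}\mathrm{e}^{-c_\ast t|\xi|^2}|\le C\sum_{|\alpha_1|/2\le l\le|\alpha_1|}t^l|\xi|^{2l-|\alpha_1|}\mathrm{e}^{-c_\ast t|\xi|^2}.
\]

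\textbf{Step 2 ($\alpha_2=0$).} For the term with $\alpha_2=0$ we multiply by the bound $|\Phi|\le t|H_A|\,\mathrm{e}^{t\max(H_A,0)}$. Since $-c_\ast|\xi|^2+\max(H_A,0)\le -c_{\ast\ast}|\xi|^2$ by \eqref{es;ha0} and \eqref{est;cscss} (as in the proof of Lemma \ref{lem;esha1}), and $|H_A|\le C|\xi|^4$, this gives $t^{l+1}|\xi|^{2l+4-|\alpha|}\mathrm{e}^{-c_{\ast\ast}t|\xi|^2}$. That is exactly the first sum.

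\textbf{Step 3 ($|\alpha_2|\ge1$).} Here $\partial^{\alpha_2}\Phi=\partial^{\alpha_2}\mathrm{e}^{tH_A}$, which by Faà di Bruno equals $\mathrm{e}^{tH_A}$ times a sum of products $t^m\prod_{i=1}^m\partial^{\gamma_i}H_A$ with $1\le m\le|\alpha_2|$ and $\sum\gamma_i=\alpha_2$, $|\gamma_i|\ge1$. The key symbol estimate is
\[
|\partial^{\gamma}H_A(\xi)|\le C|\xi|^{4-|\gamma|}\qquad(\xi\neq0),
\]
valid for all $\gamma$. I would prove it by noting that $H_A(\xi)=|\xi|^4g(|\xi|^2)$, where $g(r)=A\big[\beta_2/(\lambda_2(\lambda_2+r))-\beta_1/(\lambda_1(\lambda_1+r))\big]$ is smooth and bounded with all derivatives decaying, $|g^{(k)}(r)|\le C(1+r)^{-1-k}$. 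Then $H_A(\xi)=|\xi|^4 g(|\xi|^2)$ is a classical symbol, and every $\xi$-derivative lowers the homogeneity by one. At small $|\xi|$ this holds because $|\xi|^4$ is a polynomial and $g(|\xi|^2)$ is smooth. At large $|\xi|$ it holds because each derivative either hits $|\xi|^4$, losing one power, or hits $g(|\xi|^2)$, producing $|\xi|\,|g'|\lesssim|\xi|^{-3}\cdot|\xi|^{2}$, which is even better.

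This yields $|\partial^{\alpha_2}\mathrm{e}^{tH_A}|\le C\mathrm{e}^{tH_A}\sum_m t^m|\xi|^{4m-|\alpha_2|}$. We then combine this with Step 1 and again use $\mathrm{e}^{-c_\ast t|\xi|^2+tH_A}=\mathrm{e}^{th_A}\le\mathrm{e}^{-c_{\ast\ast}t|\xi|^2}$. The resulting exponent of $|\xi|$ is $2l-|\alpha_1|+4m-|\alpha_2|=2l+4m-|\alpha|$, giving the double sum. The restriction $t\ge1$ is not really needed, but it is harmless.

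\textbf{Main obstacle.} The main obstacle is the careful bookkeeping in Step 3. First, we need the uniform symbol bound on $\partial^\gamma H_A$ over all of $\mathbb{R}^n\setminus\{0\}$. Second, we need to check that the bound on $\mathrm{e}^{tH_A}$ is never used separately: only the combined exponent $th_A$ should be bounded by \eqref{es;ha0}. This matters because $H_A$ may be positive for large $|\xi|$.
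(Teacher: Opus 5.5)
Your proposal is correct and is essentially the paper's proof. You use the same Leibniz splitting of $(2\pi)^{-n/2}\mathrm{e}^{-c_\ast t|\xi|^2}(\mathrm{e}^{tH_A}-1)$ into the $\alpha_2=0$ term (your Step~2 re-derives the bound of Lemma~\ref{lem;esha1}) and the $|\alpha_2|\ge 1$ terms (Fa\`a di Bruno, the symbol bound $|\partial_\xi^\gamma H_A|\le C|\xi|^{4-|\gamma|}$, and $\mathrm{e}^{th_A}\le \mathrm{e}^{-c_{\ast\ast}t|\xi|^2}$ applied only to the combined exponent), and, as you note, the restriction $t\ge 1$ is never used there either.
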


The following three lemmas are needed to show Lemma \ref{lem;key}. 

\begin{lem}\label{lem;xecaa}
Let $\alpha \in \mathbb{Z}_{+}^n$. 
Then there exists 
$
	C
	=
	C(n, \beta_1, \beta_2, \lambda_1, \lambda_2, A, \alpha)
	>
	0
$ such that 
\begin{equation}
\label{est;ecaa}
	\big|
		\partial_\xi^\alpha 
		\mathrm{e}^{-c_{\ast}t|\xi|^2}
	\big|
	\le 
	C
	\mathrm{e}^{-c_{\ast} t|\xi|^2}
	\sum_{\frac{|\alpha|}{2}\le l\le |\alpha|}
		t^l|\xi|^{2l-|\alpha|}, 
	\quad t>0, \quad \xi \in \mathbb{R}^n \setminus \{ 0 \},
\end{equation}
where $c_{\ast} > 0$ is given by \eqref{const;ca}.
\end{lem}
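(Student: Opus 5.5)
We will prove Lemma \ref{lem;xecaa} by writing the Gaussian as a composition and applying the multivariate Fa\`a di Bruno formula, or equivalently by induction on $|\alpha|$. Set $\phi(\xi)\coloneqq -c_{\ast}t|\xi|^2$. Every derivative of $\mathrm{e}^{\phi}$ has the form $\mathrm{e}^{\phi}$ times a sum of products $\partial^{\gamma_1}\phi\cdots\partial^{\gamma_l}\phi$ with $\gamma_1+\cdots+\gamma_l=\alpha$ and each $|\gamma_i|\ge 1$. Since $\phi$ is a quadratic polynomial, only $|\gamma_i|\in\{1,2\}$ survive. Moreover $|\partial_j\phi|=2c_{\ast}t|\xi_j|\le Ct|\xi|$, and $|\partial^{\gamma}\phi|\le Ct$ when $|\gamma|=2$ (the mixed second derivatives vanish, and $\partial_j^2\phi=-2c_\ast t$).

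Fix a term with $l$ factors, of which $a$ are first derivatives and $b$ are second derivatives. Then
\[
	a+b=l,
	\qquad
	a+2b=|\alpha|,
\]
so $a=2l-|\alpha|\ge 0$ and $b=|\alpha|-l\ge 0$. This gives exactly the constraint $|\alpha|/2\le l\le|\alpha|$. The product is bounded by
\[
	C(t|\xi|)^{a}\,t^{b}
	=
	Ct^{l}|\xi|^{2l-|\alpha|}.
\]
The number of terms and the combinatorial coefficients depend only on $n$ and $\alpha$. The constants involve powers of $c_\ast$, which depends only on $\beta_1,\beta_2,\lambda_1,\lambda_2,A$. Summing over the finitely many terms yields \eqref{est;ecaa}. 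The restriction $\xi\ne 0$ is only needed so that the negative powers $|\xi|^{2l-|\alpha|}$ (arising when $l<|\alpha|$) make sense; no singularity actually occurs, because $a\ge0$.

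If one wants to avoid Fa\`a di Bruno entirely, the alternative is an induction step. Assume
\[
	\partial^\alpha \mathrm{e}^{\phi}=\mathrm{e}^{\phi}P_\alpha(\xi),
\]
where $P_\alpha$ is a polynomial that is a sum of monomials $t^l\xi^\mu$ with $|\mu|=2l-|\alpha|$ and $|\alpha|/2\le l\le|\alpha|$. Applying $\partial_j$ gives
\[
	\partial_j\bigl(\mathrm{e}^{\phi}P_\alpha\bigr)
	=
	\mathrm{e}^{\phi}\bigl(-2c_\ast t\xi_jP_\alpha+\partial_jP_\alpha\bigr).
\]
Multiplication by $t\xi_j$ sends $(l,|\mu|)$ to $(l+1,|\mu|+1)$. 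Differentiation sends it to $(l,|\mu|-1)$. In both cases the relation $|\mu|=2l-|\alpha+e_j|$ and the range of $l$ are preserved. Finally one uses $|\xi^\mu|\le|\xi|^{|\mu|}$.

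There is no real obstacle here. The only point requiring care is the bookkeeping that shows exactly the range $|\alpha|/2\le l\le|\alpha|$ appears, and this is the degree-counting given above.
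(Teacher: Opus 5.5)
Your argument is correct and is essentially the paper's proof: the paper applies the same Fa\`a di Bruno-type expansion (\cite[Lemma 3.3]{WY}) together with $|\partial_\xi^\gamma|\xi|^2|\le 2|\xi|^{2-|\gamma|}$ for $|\gamma|\le 2$, and your count $a=2l-|\alpha|$, $b=|\alpha|-l$ just makes explicit why only the range $|\alpha|/2\le l\le|\alpha|$ survives. One small slip: in that range every exponent $2l-|\alpha|$ is nonnegative, so no negative powers arise at all, and the condition $\xi\ne 0$ only matters for interpreting $|\xi|^{0}$ when $2l=|\alpha|$.
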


\begin{proof}
\eqref{est;ecaa} will be proved only in the case $|\alpha| \ge 1$.
Using \cite[Lemma 3.3]{WY} and 
the estimate 
$
	|
		\partial_\xi^\alpha
		|\xi|^2
	|
	\le 
	2
	|\xi|^{2-|\alpha|}
$ 
for every $\alpha \in \mathbb{Z}_{+}^n$ with $|\alpha| \le 2$ and every $\xi \in \mathbb{R}^n$, we obtain  
\begin{align*}
	\big|
		\partial_\xi^\alpha 
		\mathrm{e}^{-c_{\ast} t|\xi|^2}
	\big|
	\le \, &
	\mathrm{e}^{-c_{\ast} t|\xi|^2}
	\sum_{l=1}^{|\alpha|}
		(c_{\ast}t)^l
		\sum_{\substack{\alpha_1+\cdots+\alpha_l=\alpha, \\ |\alpha_i|\ge 1}}
			|
				\Gamma_{\alpha_1,\ldots, \alpha_l}^l
			|
			|
				(\partial_\xi^{\alpha_1}|\xi|^2)
				\cdots
				(\partial_\xi^{\alpha_l}|\xi|^2)
			| \\
	\le \, &
	C
	\mathrm{e}^{-c_{\ast} t|\xi|^2}
	\sum_{\frac{|\alpha|}{2}\le l\le |\alpha|}
		t^l|\xi|^{2l-|\alpha|}, \qquad  
		t>0, \quad \xi \in \mathbb{R}^n \setminus \{ 0 \},
\end{align*}
where 
\[
	C
	\coloneqq 
	\max_{ \frac{|\alpha|}{2}\le l\le |\alpha| }
	\left[
		(2c_{\ast})^l
		\sum_{
			\substack{
				\alpha_1+\cdots+\alpha_l=\alpha, \\ 
				|\alpha_i|\ge 1
			}
		}
		|
			\Gamma_{\alpha_1,\ldots, \alpha_l}^l
		|
	\right]
	>
	0.
\]
Thus the proof of Lemma \ref{lem;xecaa} is complete.
\end{proof}

\begin{lem}\label{lem;HAest}
Let $\alpha \in \mathbb{Z}_{+}^n$. 
Then there exists a constant 
$
	C
	=
	C(n,\beta_1,\beta_2,\lambda_1, \lambda_2,  A, \alpha)
	>0
$ 
such that
\begin{equation}\label{est;HA}
	\big|
		\partial_\xi^\alpha H_A(\xi)
	\big|
	\le 
	C
	|\xi|^{4-|\alpha|}
	, \qquad 
	\xi \in \mathbb{R}^n \setminus \{ 0 \}, 
\end{equation}
where $H_A(\xi)$ is defined by \eqref{fn;Ha}.
\end{lem}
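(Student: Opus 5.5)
The plan is to write $H_A$ as $A|\xi|^4$ times a smooth radial function of $|\xi|^2$, and then treat two regimes separately: small $|\xi|$ and large $|\xi|$. Setting $r=|\xi|^2$, we have
\[
	H_A(\xi)
	=
	A\,g(|\xi|^2)\,|\xi|^4,
	\qquad
	g(r)
	\coloneqq
	\frac{\beta_2}{\lambda_2(\lambda_2+r)}
	-
	\frac{\beta_1}{\lambda_1(\lambda_1+r)},
\]
where $g$ is smooth on $(-\min\{\lambda_1,\lambda_2\},\infty)$ and satisfies $|g^{(k)}(r)|\le C_k(1+r)^{-1-k}$ for $r\ge 0$. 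The function $|\xi|^4=(\xi_1^2+\cdots+\xi_n^2)^2$ is a homogeneous polynomial of degree $4$. Hence $\partial_\xi^{\beta}|\xi|^4$ is homogeneous of degree $4-|\beta|$, vanishes when $|\beta|>4$, and satisfies $|\partial_\xi^\beta|\xi|^4|\le C|\xi|^{4-|\beta|}$.

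Next we apply the Leibniz rule to $\partial_\xi^\alpha\big(g(|\xi|^2)|\xi|^4\big)$. This splits the derivative into terms $\partial_\xi^{\gamma}[g(|\xi|^2)]\,\partial_\xi^{\alpha-\gamma}|\xi|^4$. To estimate the factor $\partial_\xi^\gamma[g(|\xi|^2)]$ we use the chain rule (Fa\`a di Bruno, as in \cite[Lemma 3.3]{WY} and the proof of Lemma \ref{lem;xecaa}), together with the bound $|\partial_\xi^{\beta}|\xi|^2|\le 2|\xi|^{2-|\beta|}$ for $|\beta|\le 2$ and the vanishing of higher derivatives. This gives
\[
	\big|\partial_\xi^\gamma[g(|\xi|^2)]\big|
	\le
	C\sum_{\frac{|\gamma|}{2}\le l\le|\gamma|}
	|g^{(l)}(|\xi|^2)|\,|\xi|^{2l-|\gamma|}.
\]

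We now estimate in the two regimes, in which the bound behaves differently.

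For $|\xi|\le 1$, the functions $g^{(l)}$ are bounded on $[0,1]$. Since $2l\ge|\gamma|$, each term is at most $C|\xi|^{2l-|\gamma|}\le C|\xi|^{-|\gamma|}$. Multiplying by $|\xi|^{4-|\alpha|+|\gamma|}$ yields $C|\xi|^{4-|\alpha|}$.

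For $|\xi|\ge 1$, the decay $|g^{(l)}(r)|\le Cr^{-1-l}$ gives
\[
	|g^{(l)}(|\xi|^2)|\,|\xi|^{2l-|\gamma|}
	\le
	C|\xi|^{-2-|\gamma|}.
\]
The full product is therefore at most $C|\xi|^{2-|\alpha|}\le C|\xi|^{4-|\alpha|}$. So the large-$|\xi|$ regime is actually better than required.

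The only point needing care is the small-$|\xi|$ regime when $|\alpha|>4$. There the homogeneity of $|\xi|^4$ alone does not supply the correct power, and the bound comes from the Fa\`a di Bruno structure through $2l\ge|\gamma|$, which yields exactly $|\xi|^{-|\gamma|}$ at worst. Summing over $\gamma$ then completes \eqref{est;HA}. An alternative route is to expand each $\frac{1}{\lambda_j+|\xi|^2}$ and use the estimate $|\partial^\gamma(\lambda+|\xi|^2)^{-1}|\le C|\xi|^{-|\gamma|}(\lambda+|\xi|^2)^{-1}$, which also reaches \eqref{est;HA}.
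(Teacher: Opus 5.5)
Your proof is correct and follows the paper's route: Leibniz's rule separates $\partial_\xi^{\alpha-\gamma}|\xi|^4$ (bounded by $C|\xi|^{4-|\alpha-\gamma|}$, and zero when $|\alpha-\gamma|>4$) from derivatives of the rational factor $g(|\xi|^2)$. The only difference is that you derive the bound on $\partial_\xi^{\gamma}[g(|\xi|^2)]$ yourself, via Fa\`a di Bruno and a split at $|\xi|=1$, whereas the paper quotes \cite[Lemma 3.2]{WY} in the unified form $C(1+|\xi|)^{-2-|\gamma|}$ and concludes from $|\xi|^{|\gamma|}(1+|\xi|)^{-2-|\gamma|}\le 1$ (also, for $|\xi|\le 1$ your condition $2l\ge|\gamma|$ actually gives the stronger bound $C$, so the case $|\alpha|>4$ needs no special care).
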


\begin{proof}
We will only prove \eqref{est;HA} in the case $|\alpha|\ge 1$. 
By direct calculation, we observe
$
	|
		\partial_\xi^\alpha|\xi|^4
	|
	\le 
	C
	|\xi|^{4-|\alpha|}
$
for any $\alpha \in \mathbb{Z}_{+}^n$ with $|\alpha|\le 4$ and all $\xi \in \mathbb{R}^n$. 
It also follows from \cite[Lemma 3.2]{WY} that 
\[
	\left|
		\partial_\xi^{\alpha}
		\left[
			\frac{\beta_2}{\lambda_2(\lambda_2+|\xi|^2)}
			-
			\frac{\beta_1}{\lambda_1(\lambda_1+|\xi|^2)}
		\right]
	\right|
	\le 
	C
	(1+|\xi|)^{-2-|\alpha|}
\]
for $\alpha \in \mathbb{Z}_{+}^n$, $\xi \in \mathbb{R}^n$ and $C = C(n, \beta_1,\beta_2,\lambda_1, \lambda_2, A, \alpha) > 0$. 
Together with Leibniz's rule and the identity
$
	\partial_\xi^\alpha
		|
			\xi
		|^4
		=
		0
$ 
$
	(
		|\alpha|
		\ge 
		5
	)
$, these estimates imply that  
\begin{align*}
	\, & 
	|\xi|^{|\alpha|-4}
	\big|
		\partial_\xi^\alpha H_A(\xi)
	\big| \\
	= \, &
	|\xi|^{|\alpha|-4}
	\left|
		\sum_{\substack{\alpha_1+\alpha_2=\alpha \\ |\alpha_1|\le 4}}
		C( A, \alpha, \alpha_1, \alpha_2)
		\partial_\xi^{\alpha_1}|\xi|^4
		\partial_\xi^{\alpha_2}
		\left[
			\frac{\beta_2}{\lambda_2(\lambda_2+|\xi|^2)}
			-
			\frac{\beta_1}{\lambda_1(\lambda_1+|\xi|^2)}
		\right]
	\right| \\
	\le \, &
	\sum_{\substack{\alpha_1+\alpha_2=\alpha \\ |\alpha_1|\le 4}}
		C(n, \beta_1, \beta_2, \lambda_1, \lambda_2, A,\alpha, \alpha_1, \alpha_2)
		|\xi|^{|\alpha|-|\alpha_1|}(1+|\xi|)^{-2-|\alpha_2|} \\
	\le \, & 
	\sum_{\substack{\alpha_1+\alpha_2=\alpha \\ |\alpha_1|\le 4}}
		C(n, \beta_1, \beta_2, \lambda_1, \lambda_2, A,\alpha, \alpha_1, \alpha_2)
\end{align*}
for any $\alpha\in \mathbb{Z}_{+}^n$ with $|\alpha| \ge 1$ and all $\xi \in \mathbb{R}^n \setminus \{ 0 \}$. 
Therefore Lemma \ref{lem;HAest} holds. 
\end{proof}

\begin{lem}
Let $\alpha \in \mathbb{Z}_{+}^n$ satisfy $| \alpha | \ge 1$, and let $H_A$ be the function defined in \eqref{fn;Ha}.
Then there exists 
$
	C
	=
	C(n, \beta_1, \beta_2, \lambda_1, \lambda_2, A, \alpha)
	>
	0
$ 
such that 
\begin{equation}
\label{est;eHa}
	|
		\partial_\xi^\alpha 
		\mathrm{e}^{tH_A(\xi)}
	|
	\le 
	C
	\mathrm{e}^{tH_A(\xi)}
	\sum_{m=1}^{|\alpha|}
		t^m|\xi|^{4m-|\alpha|}, \qquad t>0, \quad \xi \in \mathbb{R}^n \setminus \{ 0 \}.
\end{equation}
\end{lem}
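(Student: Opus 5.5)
We will derive \eqref{est;eHa} from the multivariate chain rule (Faà di Bruno formula) applied to the composition $\mathrm{e}^{tH_A(\xi)}$, in exactly the way Lemma \ref{lem;xecaa} was obtained from \cite[Lemma 3.3]{WY}.

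\emph{Step 1: expand the derivative.} For $|\alpha|\ge 1$, \cite[Lemma 3.3]{WY} (the same formula used in Lemma \ref{lem;xecaa}, now with inner function $tH_A$) gives
\[
	\partial_\xi^\alpha \mathrm{e}^{tH_A(\xi)}
	=
	\mathrm{e}^{tH_A(\xi)}
	\sum_{m=1}^{|\alpha|}
		t^m
		\sum_{\substack{\alpha_1+\cdots+\alpha_m=\alpha, \\ |\alpha_i|\ge 1}}
			\Gamma^m_{\alpha_1,\ldots,\alpha_m}
			(\partial_\xi^{\alpha_1}H_A)(\xi)\cdots(\partial_\xi^{\alpha_m}H_A)(\xi),
\]
with combinatorial constants $\Gamma^m_{\alpha_1,\ldots,\alpha_m}$ depending only on $n$, $\alpha$ and the multi-indices.

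\emph{Step 2: bound each factor.} Lemma \ref{lem;HAest} gives $|\partial_\xi^{\alpha_i}H_A(\xi)|\le C|\xi|^{4-|\alpha_i|}$ for $\xi\ne 0$. Since $|\alpha_1|+\cdots+|\alpha_m|=|\alpha|$, each product is bounded by
\[
	C\prod_{i=1}^m |\xi|^{4-|\alpha_i|}=C|\xi|^{4m-|\alpha|}.
\]

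\emph{Step 3: sum.} Summing over the finitely many decompositions for each $m$ and over $1\le m\le|\alpha|$ yields \eqref{est;eHa}, with
\[
	C\coloneqq \max_{1\le m\le |\alpha|}\sum_{\substack{\alpha_1+\cdots+\alpha_m=\alpha, \\ |\alpha_i|\ge 1}}|\Gamma^m_{\alpha_1,\ldots,\alpha_m}|\,\prod_{i=1}^m C_{\alpha_i},
\]
where $C_{\alpha_i}$ are the constants from Lemma \ref{lem;HAest}; this $C$ depends only on the parameters listed.

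No genuine obstacle is expected. The only subtlety is that, unlike for $|\xi|^2$, the derivatives of $H_A$ need not vanish for large order. Lemma \ref{lem;HAest} already handles this uniformly for all $\alpha$, so the power $4m-|\alpha|$ comes out exactly, possibly negative, which is why $\xi\ne 0$ is required. Note also that the exponential factor is kept as $\mathrm{e}^{tH_A(\xi)}$; no sign information on $H_A$ is needed here.
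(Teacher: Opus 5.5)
Your proposal is correct and matches the paper's proof. The paper likewise applies the Fa\`a di Bruno-type formula \cite[Lemma 3.3]{WY} to $\mathrm{e}^{tH_A}$ and bounds each product $\partial_\xi^{\alpha_1}H_A\cdots\partial_\xi^{\alpha_m}H_A$ by $C|\xi|^{4m-|\alpha|}$ using Lemma~\ref{lem;HAest}. It then takes the constant as a maximum over $1\le m\le|\alpha|$ of the summed combinatorial constants, as you do.
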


\begin{proof} 
Applying \cite[Lemma 3.3]{WY} and \eqref{est;HA} yields 
\begin{align*}
	&\, 
	|
		\partial_\xi^\alpha \mathrm{e}^{tH_A(\xi)}
	|\\
	\le \, &
	\mathrm{e}^{tH_A(\xi)}
	\sum_{m=1}^{|\alpha|}
		t^m
		\sum_{\substack{\alpha_1+\cdots+\alpha_m=\alpha, \\ |\alpha_i|\ge 1}}
			|
				C(m,\alpha_1,\ldots, \alpha_m)
			|
			\big|
				(
					\partial_\xi^{\alpha_1}H_A(\xi)
				)
				\cdots
				(
					\partial_\xi^{\alpha_m}H_A(\xi)
				)
			\big| \\
	\le \, &
	\mathrm{e}^{tH_A(\xi)}
	\sum_{m=1}^{|\alpha|}
		t^m
		|\xi|^{4m-|\alpha|}
		\left(
			\sum_{\substack{\alpha_1+\cdots+\alpha_m=\alpha, \\ |\alpha_i|\ge 1}}
				C(n, \beta_1, \beta_2, \lambda_1, \lambda_2, A, m, \alpha_1,\ldots, \alpha_m)
		\right)\\
	\le \, &
	C
		\mathrm{e}^{tH_A(\xi)}
		\sum_{m=1}^{|\alpha|}
			t^m|\xi|^{4m-|\alpha|}
\end{align*}
for $t>0$ and $\xi \in \mathbb{R}^n \setminus \{ 0 \}$,
where
\[
	C
	\coloneqq 
	\max_{1\le m\le |\alpha|}
	\left[
		\sum_{\substack{\alpha_1+\cdots+\alpha_m=\alpha, \\ |\alpha_i|\ge 1}}
			C(n, \beta_1, \beta_2, \lambda_1, \lambda_2, A, m, \alpha_1,\ldots, \alpha_m)
	\right]
	>
	0.
\] 
This proves \eqref{est;eHa}.
\end{proof}

\begin{proof}[{\bf Proof of Lemma \ref{lem;key}}]
It follows from \eqref{fn;ha1} and Leibniz's rule that 
for $\alpha \in \mathbb{Z}_{+}^n$ with $|\alpha|\ge 1$, $t \ge 1$ and $\xi \in \mathbb{R}^n \setminus \{ 0 \}$,
\begin{equation}\label{est;key0}
	\begin{aligned}
		\partial_\xi^{\alpha}
		\left(
			\mathcal{F}[\mathcal{K}_A(t)](\xi)
		\right)
		= \, &
		(2\pi)^{-\frac{n}2}
		\partial_\xi^{\alpha}
		[
			\mathrm{e}^{th_A(\xi)}
			-
			\mathrm{e}^{-c_{\ast}t|\xi|^2}
		]\\
		= \, &
		(2\pi)^{-\frac{n}2}
		\sum_{\alpha_1+\alpha_2=\alpha}
			C(\alpha, \alpha_1, \alpha_2)
			(
				\partial_\xi^{\alpha_1}
				\mathrm{e}^{-c_{\ast}t|\xi|^2}
			)
			\partial_\xi^{\alpha_2}
			(
				\mathrm{e}^{tH_A(\xi)}
				-
				1
			) \\
		= \, &
		(2\pi)^{-\frac{n}2}
		(
			\partial_\xi^{\alpha}
			\mathrm{e}^{-c_{\ast}t|\xi|^2}
		)
		(
			\mathrm{e}^{tH_A(\xi)}
			-
			1
		) \\
		& \, +
		(2\pi)^{-\frac{n}2}
		\sum_{\substack{\alpha_1+\alpha_2=\alpha, \\ |\alpha_2|\ge 1}}
			C(\alpha, \alpha_1, \alpha_2)
			(
				\partial_\xi^{\alpha_1}
				\mathrm{e}^{-c_{\ast}t|\xi|^2}
			)
			(
				\partial_\xi^{\alpha_2}\mathrm{e}^{tH_A(\xi)}
			) \\
		\eqqcolon \, &
		J_1(t,\xi)+J_2(t,\xi).
	\end{aligned}
\end{equation}
By \eqref{es;hacss}, \eqref{fn;ha1} and \eqref{est;ecaa}, the estimate of $|J_1(t,\xi)|$ is given by 
\begin{equation}\label{est;key1}
\begin{aligned}
	|J_1(t,\xi)|
	\le \, &
	C
	\sum_{\frac{|\alpha|}{2}\le l\le |\alpha|}
		t^l
		|\xi|^{2l-|\alpha|}
		(2\pi)^{-\frac{n}2}
		\mathrm{e}^{-c_{\ast} t|\xi|^2}
		\big|
			\mathrm{e}^{tH_A(\xi)}
			-
			1
		\big| \\
	= \, &
	C
	\sum_{\frac{|\alpha|}{2}\le l\le |\alpha|}
		t^l
		|\xi|^{2l-|\alpha|}
		|
			\mathcal{F}
			[\mathcal{K}_A(t)](\xi)
		| \\
	\le \, &
	C
	\mathrm{e}^{-c_{\ast\ast} t|\xi|^2}
	\sum_{\frac{|\alpha|}{2}\le l\le |\alpha|}
		t^{l+1}
		|\xi|^{2l+4-|\alpha|} .
\end{aligned}
\end{equation}
We use \eqref{es;ha0}, \eqref{fn;ha1}, \eqref{est;ecaa} and \eqref{est;eHa} to estimate $|J_2(t,\xi)|$ as follows: 
\begin{equation}\label{est;key2}
	\begin{aligned}
		|J_2&(t,\xi)| \\
		\le \, & 
		(2\pi)^{-\frac{n}2}
		\sum_{\substack{\alpha_1+\alpha_2=\alpha, \\ |\alpha_2|\ge 1}}
			C(\alpha, \alpha_1, \alpha_2)
			\big|
				\partial_\xi^{\alpha_1}
				\mathrm{e}^{-c_{\ast}t|\xi|^2}
			\big|
			\big|
				\partial_\xi^{\alpha_2}
				\mathrm{e}^{tH_A(\xi)}
			\big| \\
		\le \, & 
		\mathrm{e}^{th_A(\xi)}
		\sum_{\substack{\alpha_1+\alpha_2=\alpha, \\ |\alpha_2|\ge 1}}
		\sum_{\substack{\frac{|\alpha_1|}{2}\le l\le |\alpha_1|, \\ 1\le m\le |\alpha_2|}}
			C(n, \beta_1, \beta_2, \lambda_1, \lambda_2, A, \alpha, \alpha_1, \alpha_2)
			t^{m+l}|\xi|^{2l+4m-|\alpha|} \\
		\le \, & 
		C
		\mathrm{e}^{-c_{\ast\ast}t|\xi|^2}
		\sum_{\substack{\alpha_1+\alpha_2=\alpha, \\ |\alpha_2|\ge 1}}
		\sum_{\substack{\frac{|\alpha_1|}{2}\le l\le |\alpha_1|, \\ 1\le m\le |\alpha_2|}}
			t^{l+m}
			|\xi|^{2l+4m-|\alpha|}.
	\end{aligned}
\end{equation}
Thus, combining \eqref{est;key1} and \eqref{est;key2} with \eqref{est;key0}, we get the desired estimate \eqref{est;key}.
\end{proof}

\subsection{$L^q$-estimates for $\mathcal{K}_A(t)$}
Using Lemmas \ref{lem;esha1} and \ref{lem;key}, 
we derive $L^q$-estimates for $\mathcal{K}_A(t)$, defined by \eqref{fn;hka}. 
We first consider the cases $q=2$ and $q = \infty$. 
\begin{lem}\label{lem;ifhkaie}
Assume $0<A<A_\ast$. 
Then, there exist constants $C$, $C^{\prime}>0$ such that for every $t>0$,
\begin{align}\label{es;ifhka2e0}
	\|
		\mathcal{K}_A(t)
	\|_2
	\le &
	C
	t^{-\frac{n}4-1},\\
\label{es;ifhkaie0}
	\|
		\mathcal{K}_A(t)
	\|_\infty
	\le &
	C^{\prime}
	t^{-\frac{n}2-1}.
\end{align}
\end{lem}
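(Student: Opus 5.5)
Both bounds follow from the pointwise Fourier-side estimate in Lemma \ref{lem;esha1}, combined with Plancherel's theorem for $q=2$ and the Hausdorff--Young (Riemann--Lebesgue) bound $\|\mathcal{F}^{-1}[g]\|_\infty\le (2\pi)^{-n/2}\|g\|_1$ for $q=\infty$. The integrals produced are exactly of the type handled by Lemma \ref{lem;ede}.

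For the $L^2$ estimate \eqref{es;ifhka2e0}, Plancherel gives $\|\mathcal{K}_A(t)\|_2=\|\mathcal{F}[\mathcal{K}_A(t)]\|_2$. By \eqref{es;hacss}, this is at most
\[
	C t\left(\int_{\mathbb{R}^n}|\xi|^8\mathrm{e}^{-2c_{\ast\ast}t|\xi|^2}\,\mathrm{d}\xi\right)^{1/2}.
\]
Lemma \ref{lem;ede} with $s=8$ and $k=2c_{\ast\ast}$ evaluates the integral as $Ct^{-\frac n2-4}$. Taking the square root and multiplying by $t$ gives $Ct^{1-\frac n4-2}=Ct^{-\frac n4-1}$ for all $t>0$.

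For the $L^\infty$ estimate \eqref{es;ifhkaie0}, I write $\mathcal{K}_A(t,x)=(2\pi)^{-n/2}\mathcal{F}^{-1}[\mathrm{e}^{th_A}-\mathrm{e}^{-c_\ast t|\cdot|^2}](x)$ as an absolutely convergent integral, bound the modulus of the integrand, and apply \eqref{es;hacss}. This gives
\[
	|\mathcal{K}_A(t,x)|\le Ct\int_{\mathbb{R}^n}|\xi|^4\mathrm{e}^{-c_{\ast\ast}t|\xi|^2}\,\mathrm{d}\xi.
\]
Lemma \ref{lem;ede} with $s=4$ evaluates this as $Ct^{1-\frac n2-2}=Ct^{-\frac n2-1}$, uniformly in $x$.

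The only point needing care is justifying these Fourier representations pointwise and in $L^2$. The multiplier $\mathrm{e}^{th_A}-\mathrm{e}^{-c_\ast t|\xi|^2}$ lies in $L^1\cap L^2$ because \eqref{es;ha0} and \eqref{es;hacss} give Gaussian decay. Hence the inverse Fourier transform is a bounded continuous function given by the integral formula, and Plancherel applies. No case distinction in $t$ is needed: Lemma \ref{lem;esha1} holds for all $t>0$, unlike Lemma \ref{lem;key}, which is reserved for the $L^1$ bound.
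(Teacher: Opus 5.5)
Your proposal is correct and matches the paper's proof: Plancherel with the pointwise bound \eqref{es;hacss} and Lemma \ref{lem;ede} ($s=8$, $k=2c_{\ast\ast}$) give the $L^2$ estimate, and bounding $|\mathcal{K}_A(t,x)|$ by $(2\pi)^{-n/2}\|\mathcal{F}[\mathcal{K}_A(t)]\|_1$ with $s=4$ gives the $L^\infty$ estimate, both for all $t>0$. Your remark that the multiplier lies in $L^1\cap L^2$ supplies a justification the paper leaves implicit.
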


\begin{proof}
We first prove \eqref{es;ifhka2e0}. 
Applying Parseval's identity, \eqref{es;hacss} and \eqref{es;ede}, we get 
\begin{align*}
	\|
		\mathcal{K}_A(t)
	\|_2^2
	= \, &
	\|
		\mathcal{F}[\mathcal{K}_A(t)]
	\|_2^2\\
	\le \, &
	C
	t^2
	\Big\|
		|\cdot|^{8}\mathrm{e}^{-2tc_{\ast\ast}|\cdot|^2}
	\Big\|_1\\
	\le \, &
	C
	t^{-\frac{n}2-2}
\end{align*}
for $t>0$, 
which implies estimate \eqref{es;ifhka2e0}.

We next prove \eqref{es;ifhkaie0}. 
Using \eqref{es;hacss} and \eqref{es;ede}, we obtain, for every $t > 0$ and $x \in \mathbb{R}^n$,
\begin{equation}\label{est;jka}
	|
		\mathcal{K}_A(t,x)
	|
	\le 
	(2\pi)^{-\frac{n}2}
	\|
		\mathcal{F}
		[
			\mathcal{K}_A(t)
		]
	\|_1
	\le \, C
	t
	\Big\|
		|\cdot|^{4}
		\mathrm{e}^{-c_{\ast\ast} t|\cdot|^2}
	\Big\|_1
	\le 
	C
	t^{-\frac{n}2-1}.
\end{equation}
Hence, \eqref{es;ifhkaie0} follows from \eqref{est;jka}. 
\end{proof}

To obtain the $L^1$-estimate for $K_A(t,\cdot)$, we use the derivative bounds established above.
\begin{lem}\label{lem;keyK}
Suppose that $0<A<A_\ast$. 
Then, the following estimate holds: 
\begin{equation}\label{est;keyK}
	\|
		\mathcal{K}_A(t)
	\|_1 
	\le 
	C
	t^{-1}, \qquad t\ge 1,
\end{equation}
where $C > 0$ is independent of $t$.
\end{lem}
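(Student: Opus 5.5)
We split the $L^1$-norm into the parabolic region $|x|\le \sqrt{t}$ and its complement, using the standard Carlson--Beurling type argument. With $N\coloneqq \lfloor n/2\rfloor+1$, so that $2N>n$, Cauchy--Schwarz gives
\[
	\|\mathcal{K}_A(t)\|_1
	\le
	\int_{|x|\le \sqrt t}|\mathcal{K}_A(t,x)|\,\mathrm{d}x
	+
	\Big(\int_{|x|>\sqrt t}|x|^{-2N}\,\mathrm{d}x\Big)^{\frac12}
	\big\||x|^N\mathcal{K}_A(t)\big\|_2 .
\]
We treat the two terms separately.

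For the first term we apply Cauchy--Schwarz on the ball together with \eqref{es;ifhka2e0}. This gives a bound of $C t^{n/4}\cdot t^{-n/4-1}=Ct^{-1}$. Alternatively, \eqref{es;ifhkaie0} gives $Ct^{n/2}t^{-n/2-1}$.

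For the second term, the first factor equals $Ct^{(n-2N)/4}$. It therefore suffices to prove
\[
	\big\||x|^N\mathcal{K}_A(t)\big\|_2\le C t^{\frac{N}{2}-\frac{n}{4}-1},\qquad t\ge1.
\]
Since $|x|^{2N}\le C\sum_{|\alpha|=N}|x^\alpha|^2$, Plancherel reduces this to bounding $\|\partial_\xi^\alpha\mathcal{F}[\mathcal{K}_A(t)]\|_2$ for $|\alpha|=N$. Here we invoke Lemma \ref{lem;key}. Every term there has the form $t^{a}|\xi|^{b}\mathrm{e}^{-c_{\ast\ast}t|\xi|^2}$. By Lemma \ref{lem;ede}, its $L^2$-norm squared is $Ct^{2a-n/2-b}$, provided $2b>-n$. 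This gives $L^2$-norm $Ct^{a-b/2-n/4}$. We check the exponents:
\begin{itemize}
\item For the $J_1$-type terms we have $a=l+1$ and $b=2l+4-N$, so $a-b/2=N/2-1$. This is exactly the desired rate.
\item For the $J_2$-type terms we have $a=l+m$ and $b=2l+4m-N$, so $a-b/2=N/2-m\le N/2-1$ since $m\ge1$. Because $t\ge1$, these terms are also bounded by $t^{N/2-n/4-1}$.
\end{itemize}
Combining the two regions yields $\|\mathcal{K}_A(t)\|_1\le Ct^{-1}$ for $t\ge1$.

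The main obstacle is the integrability condition $2b>-n$ near $\xi=0$, together with the justification of $\mathcal{F}[x^\alpha \mathcal{K}_A]=i^{|\alpha|}\partial_\xi^\alpha\mathcal{F}[\mathcal{K}_A]$ when the symbol may be singular at the origin. The smallest exponent is $b=2l+4m-N\ge 4-N$, attained with $l=0$ in $J_2$, or $b\ge 4-N$ for $J_1$ with $l\ge N/2$, since there $b=2l+4-N\ge 4$. We need $2(4-N)>-n$, i.e.\ $2N<n+8$, which holds for $N=\lfloor n/2\rfloor+1$. Indeed $2N\le n+2$. Thus all terms are locally square integrable, and $\partial_\xi^\alpha\mathcal{F}[\mathcal{K}_A(t)]\in L^2$.

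For the Fourier identity, $\mathcal{F}[\mathcal{K}_A(t)]$ is smooth away from $0$ with derivatives up to order $N$ in $L^2$. The identity then follows in the distributional sense; the delta-type contributions at the origin vanish because the derivatives of order $<N$ are $O(|\xi|^{4-(N-1)})$ and hence locally integrable, with no boundary terms. The non-smoothness of $|\xi|^2$, $|\xi|^4$ is in fact harmless, since these are polynomials. With this justified, the estimate follows.
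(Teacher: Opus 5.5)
Your argument is correct and is essentially the paper's proof. The paper likewise obtains $\|\partial_\xi^\alpha\mathcal{F}[\mathcal{K}_A(t)]\|_2\le Ct^{-\frac{n}{4}+\frac{N}{2}-1}$ for $|\alpha|=N$ from Lemma~\ref{lem;key} and Lemma~\ref{lem;ede}, then combines it with \eqref{es;ifhka2e0} through the Carlson--Beurling-type inequality \cite[Lemma 3.4]{WY}, which your split at $|x|=\sqrt t$ simply reproves inline. Your worry about the origin is unnecessary: $h_A$ is a smooth function of $|\xi|^2$ and $h_A\le -c_{\ast\ast}|\xi|^2$, so $\mathcal{F}[\mathcal{K}_A(t)]$ is a Schwartz function and the identity $\mathcal{F}[x^\alpha\mathcal{K}_A(t)]=i^{|\alpha|}\partial_\xi^\alpha\mathcal{F}[\mathcal{K}_A(t)]$ is classical.
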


\begin{proof}
Let 
$
	N
	=
	\max
	\big\{
		2,
		\left\lfloor 
			\frac{n}2
		\right\rfloor
		+
		1
	\big\}
$. 
Then we note that $N > \max \{ 1,n/2 \}$ and $4-N> -n/2$.
From \eqref{es;ede} and \eqref{est;key} we see that 
\begin{equation}\label{est;keyK0}
\begin{aligned}
	\|
		\partial_\xi^\alpha
		\mathcal{F}[\mathcal{K}_A(t)]
	\|_2
	\le \, &
	C
	\sum_{\frac{|\alpha|}{2}\le l\le |\alpha|}
		t^{l+1}
		\Big\|
			|\cdot|^{2l+4-|\alpha|}
			\mathrm{e}^{-c_{\ast\ast} t|\cdot|^2}
		\Big\|_2\\
	& \, +
	C
	\sum_{\substack{\alpha_1+\alpha_2=\alpha, \\ |\alpha_2|\ge 1}}
	\sum_{\substack{\frac{|\alpha_1|}{2}\le l\le |\alpha_1|, \\ 1\le m\le |\alpha_2|}}
		t^{l+m}
		\Big\|
			|\cdot|^{2l+4m-|\alpha|}
			\mathrm{e}^{-c_{\ast\ast} t|\cdot|^2}
		\Big\|_2 \\
	\le \, &
	C
	t^{-\frac{n}4+\frac{|\alpha|}2-1}
\end{aligned}
\end{equation}
for $t \ge 1$ and $|\alpha|=N$. 
By \cite[Lemma 3.4]{WY}, 
Lemma \ref{lem;ifhkaie} and \eqref{est;keyK0}, we observe that 
\begin{align*}
	\|
		\mathcal{K}_A(t)
	\|_1
	\le \, &
	C
	\|
		\mathcal{F}[\mathcal{K}_A(t)]
	\|_2^{1-\frac{n}{2N}}
	\left(
		\sum_{|\alpha| = N}
			\|
				\partial_\xi^{\alpha}
				\mathcal{F}[\mathcal{K}_A(t)]
			\|_2
	\right)^{\frac{n}{2N}} \\
	\le \, &
	C
	(
		t^{-\frac{n}4-1}
	)^{1-\frac{n}{2N}}
	(
		t^{-\frac{n}4+\frac{N}2-1}
	)^{\frac{n}{2N}} \\
	\le \, &
	C
	t^{-1}
\end{align*}
for $t \ge 1$, 
where $C>0$ is independent of $t$. 
Hence we conclude that \eqref{est;keyK} holds. 
\end{proof}

\begin{proof}[{\bf Proof of Theorem \ref{t2}}]
Let $0<A<A_\ast$. 
By \eqref{es;ifhkaie0} and \eqref{est;keyK}, we have 
\begin{equation}\label{est;t20}
\begin{aligned}
	\|
		\mathcal{K}_A(t)
	\|_q
	\le \, &
	\|
		\mathcal{K}_A(t)
	\|_1^{\frac1q}
	\|
		\mathcal{K}_A(t)
	\|_\infty^{1-\frac1q} \\
	\le \, &
	C
	(
		t^{-1}
	)^{\frac1q}
	(
		t^{-\frac{n}2-1}
	)^{1-\frac1q} \\
	\le \, &
	C
	t^{-\frac{n}2(1-\frac1q)}
	(1+t)^{-1}
\end{aligned}
\end{equation}
for $t \ge 1$ and $1 \le q \le \infty$. 
Thus the estimate \eqref{asy;la} in the case $t \ge 1$ follows from \eqref{est;t20} and Young's inequality,
that is,
\[
	\| 
		\mathcal{K}_A(t) * f
	\|_q
	\le
	\| 
		\mathcal{K}_A(t)
	\|_{r}
	\| 
		f
	\|_p
	\le
	C
	t^{-\frac{n}2(\frac1p-\frac1q)}
	(1+t)^{-1}
	\|
		f
	\|_p
\]
for 
$1 \le r \le \infty$ so that 
$
	1
	+
	\frac1q
	=
	\frac1p
	+
	\frac1r
$.
Also, by \eqref{est;lrlp} and the $L^p$-$L^q$ estimate for the heat semigroup, 
\begin{equation*}
	\begin{aligned}
		\|
			\mathrm{e}^{t\mathcal{L}_A}f
			-
			\mathrm{e}^{c_*t\Delta}f
		\|_q
		\le \, &
		\|
			\mathrm{e}^{t\mathcal{L}_A}f
		\|_q
		+
		\|
			\mathrm{e}^{c_*t\Delta}f
		\|_q \\
		\le \, &
		Ct^{-\frac{n}{2}(\frac1p-\frac1q)}
		\|
			f
		\|_p
		+
		C't^{-\frac{n}{2}(\frac1p-\frac1q)}
		\|
			f
		\|_p\\
		\le \, &
		C''
		t^{-\frac{n}{2}(\frac1p-\frac1q)}(1+t)^{-1}
		\|
			f
		\|_p
	\end{aligned}
\end{equation*}
holds for $0 < t \le 1$, where $C, C', C''> 0$ are independent of $t$ and $f$. 
Therefore we get the desired estimate \eqref{asy;la}.
\end{proof}

\section{Proof of Theorem \ref{t3}}
In this section, we prove Theorem \ref{t3}. Throughout this section, we assume that $p$, $A$ and $v_0$ satisfy
the same conditions as in Proposition \ref{p1}, and recall
that the decay estimates \eqref{decay;va} and \eqref{asy;va} hold for the global solution $v$ to problem \eqref{eq;Q} obtained in Proposition \ref{prop;qgs} when the initial perturbation $v_0$ in $L^p$-norm is small enough, that is $\|v_0\|_p<\varepsilon_\ast$.
As in Section 3, we assume that the constant $\varepsilon_\ast$ in Proposition \ref{prop;qgs} satisfies $0<\varepsilon_\ast\le 1$.
We treat the subcritical case $n \ge 2$, $n/2<p<n$ and the critical case $n\ge 1$, $p=n$, separately.

\subsection{In the subcritical case $n\ge 2$, $n/2<p<n$}
\begin{prop}\label{prop;t32}
Let $n \ge 2$ and $n/2<p<n$. 
Then we have 
\begin{equation}\label{asy;mt12}
	t^{\frac{n}2(\frac1p-\frac1q)}
	\|
		v(t)-\mathrm{e}^{c_{\ast} t\Delta}v_0
	\|_q
	\le 
	C
	\left[
	t^{-1}
	+
	t^{-\frac{n}2(\frac1p-\frac1n)}
	\right]
	\|
		v_0
	\|_p
\end{equation}
for $t>0$, $p \le q \le \infty$, where $c_{\ast}$ is the positive constant defined in \eqref{const;ca}, 
and $\mathrm{e}^{c_{\ast}t\Delta}$ $(t>0)$ is the heat semigroup. 
\end{prop}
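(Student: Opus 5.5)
The estimate follows by inserting the linearized evolution and applying the triangle inequality:
\[
	\|v(t)-\mathrm{e}^{c_\ast t\Delta}v_0\|_q
	\le
	\|v(t)-\mathrm{e}^{t\mathcal{L}_A}v_0\|_q
	+
	\|\mathrm{e}^{t\mathcal{L}_A}v_0-\mathrm{e}^{c_\ast t\Delta}v_0\|_q ,
\]
for $t>0$ and $p\le q\le\infty$. Each term on the right is then handled by a result already proved.

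For the first term, I use estimate \eqref{asy;va} from the proof of Theorem \ref{t1}. It applies because $n/2<p<n$ is one of the admissible cases of Proposition \ref{p1}. It gives
\[
	t^{\frac{n}2(\frac1p-\frac1q)}\|v(t)-\mathrm{e}^{t\mathcal{L}_A}v_0\|_q
	\le C t^{-\frac{n}2(\frac1p-\frac1n)}\|v_0\|_p ,
\]
and this decays because $p<n$.

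For the second term, I apply Theorem \ref{t2} with $f=v_0\in L^p(\mathbb{R}^n)$. Its hypotheses are $1\le p\le q\le\infty$ and $0<A<A_\ast$, and both hold. It gives
\[
	t^{\frac{n}2(\frac1p-\frac1q)}\|\mathrm{e}^{t\mathcal{L}_A}v_0-\mathrm{e}^{c_\ast t\Delta}v_0\|_q
	\le C(1+t)^{-1}\|v_0\|_p
	\le C t^{-1}\|v_0\|_p .
\]

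Adding the two bounds gives \eqref{asy;mt12}. The constant depends only on $n,\beta_1,\beta_2,\lambda_1,\lambda_2,A,p,q$, since the constant in \eqref{asy;va} is independent of $v_0$ (the quadratic factor $\|v_0\|_p^2$ there has already been reduced using $\|v_0\|_p<\varepsilon_\ast\le1$).

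There is no real obstacle. The one point needing care is that \eqref{asy;va} must hold uniformly over the full range $p\le q\le\infty$, including $q=\infty$. This is exactly what was established through Lemma \ref{lem;idecayv2}, so the result follows directly.
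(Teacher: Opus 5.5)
Your proposal is correct and is essentially the paper's proof: insert $\mathrm{e}^{t\mathcal{L}_A}v_0$, apply the triangle inequality, and bound the two pieces by \eqref{asy;va} and \eqref{asy;la}, using $(1+t)^{-1}\le t^{-1}$. You also spell out that the constant stays independent of $v_0$ because $\|v_0\|_p^2\le\|v_0\|_p$ when $\|v_0\|_p<\varepsilon_\ast\le 1$, which the paper leaves implicit.
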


\begin{proof}
By the triangle inequality,
\[
	\|
		v(t)
		-
		\mathrm{e}^{c_{\ast} t\Delta}v_0
	\|_q
	\le
	\|
		v(t)
		-
		\mathrm{e}^{t\mathcal{L}_A}v_0
	\|_q
	+
	\|
		\mathrm{e}^{t\mathcal{L}_A}v_0
		-
		\mathrm{e}^{c_{\ast} t\Delta}v_0
	\|_q,\qquad t > 0.
\]
Combining this inequality with \eqref{asy;la} and \eqref{asy;va}
gives \eqref{asy;mt12}.
\end{proof}

\subsection{In the critical case $n\ge 1$, $p=n$}
Applying the argument of \cite[Theorem 3]{N}, 
we prove the following proposition. 
\begin{prop}\label{prop;t31}
Let $n \ge 1$ and $p=n$.
For every $n \le q \le \infty$, 
there exists 
$
	\varepsilon_0 
	=
	\varepsilon_0(q)
	>0 
$
such that, if $\| v_0 \|_n <\varepsilon_0$, 
then  
\begin{equation}\label{Asy;mt}
	\lim_{t\to\infty}
	t^{\frac{n}2(\frac1n-\frac1q)}
	\|
		v(t)
		-
		\mathrm{e}^{c_{\ast} t\Delta}v_0
	\|_q
	=
	0,
\end{equation} 
where $c_{\ast}$ is the positive constant defined in \eqref{const;ca}, 
and $\mathrm{e}^{c_{\ast}t\Delta}$ $(t>0)$ is the heat semigroup. 
\end{prop}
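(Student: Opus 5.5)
First I would reduce the claim to a statement about $\mathrm{e}^{t\mathcal{L}_A}$. By the triangle inequality,
\[
\|v(t)-\mathrm{e}^{c_\ast t\Delta}v_0\|_q\le \|v(t)-\mathrm{e}^{t\mathcal{L}_A}v_0\|_q+\|\mathrm{e}^{t\mathcal{L}_A}v_0-\mathrm{e}^{c_\ast t\Delta}v_0\|_q .
\]
Theorem \ref{t2} with $p=n$ bounds the second term by $Ct^{-\frac n2(\frac1n-\frac1q)}(1+t)^{-1}\|v_0\|_n$. After multiplying by $t^{\frac n2(\frac1n-\frac1q)}$, this tends to zero.

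So everything reduces to the Duhamel term $I(t)=v(t)-\mathrm{e}^{t\mathcal{L}_A}v_0$. At $p=n$, estimate \eqref{asy;va} gives only boundedness of $t^{\frac n2(\frac1n-\frac1q)}\|I(t)\|_q$, not decay, since the extra factor $t^{-\frac n2(\frac1n-\frac1n)}$ equals $1$. Following \cite[Theorem 3]{N}, I would use a density argument combined with a smallness-driven absorption.

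Fix $\eta>0$ and write $v_0=w_0+(v_0-w_0)$, where $w_0\in L^{r}\cap L^n$ for some $r\in(n/2,n)$ (take $r=1$ when $n=1$; a Schwartz function works) and $\|v_0-w_0\|_n<\eta$. I would set
\[
M_q(t)\coloneqq\sup_{0<s\le t}s^{\frac n2(\frac1n-\frac1q)}\|v(s)-\mathrm{e}^{s\mathcal{L}_A}v_0\|_q
\]
and estimate the nonlinearity $v\nabla K*v$ by splitting $v=\mathrm{e}^{s\mathcal{L}_A}v_0+(v-\mathrm{e}^{s\mathcal{L}_A}v_0)$. The linear part $\mathrm{e}^{s\mathcal{L}_A}v_0$ splits further into $\mathrm{e}^{s\mathcal{L}_A}w_0$, which decays faster because of \eqref{est;lrlp} from $L^r$, and $\mathrm{e}^{s\mathcal{L}_A}(v_0-w_0)$, which is of size $\eta$ in the scale-invariant norms.

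I would repeat the computations \eqref{decay;I1}--\eqref{decay;I2}, where one factor of the product is controlled by \eqref{decay;va} and is of size $\|v_0\|_n$. This should yield a bound of the form
\[
t^{\frac n2(\frac1n-\frac1q)}\|I(t)\|_q\le C\|v_0\|_n\Big(\eta+o(1)+\sup_{s\ge t/2}s^{\frac n2(\frac1n-\frac1{q'})}\|I(s)\|_{q'}\Big).
\]
Here $q'$ is an auxiliary exponent (for instance $q'=\infty$ together with $q'=n$), and the $o(1)$ comes from the $w_0$ part, which gains the factor $t^{-\frac n2(\frac1r-\frac1n)}$ exactly as in \eqref{asy;va}. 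Taking $\limsup_{t\to\infty}$ and choosing $\|v_0\|_n<\varepsilon_0(q)$ so that $C\|v_0\|_n\le 1/2$ lets me absorb the last term, giving $\limsup\le C\eta$; then I let $\eta\to0$. The threshold $\varepsilon_0$ depends on $q$ through the constants, which is consistent with the statement.

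I expect the main obstacle to be the time-integral estimate near $s=0$ and near $s=t$ at the critical scaling. There the integrals $\int_0^t(t-s)^{-a}s^{-b}\,\mathrm{d}s$ are borderline: $\|v\|_\infty\sim s^{-1/2}$ while $\|v\|_n$ is only bounded, so for $p=n$ the integral $\int_0^{t/2}s^{-1/2}\,\mathrm{d}s$ still gives the right power, but it produces no smallness. To get $o(1)$ I would split $\int_0^{t/2}$ into $\int_0^{\delta t}+\int_{\delta t}^{t/2}$. On $(0,\delta t)$ the contribution is small, being of order $\sqrt\delta$. On $(\delta t,t/2)$ I would insert the decomposition above and use $\limsup$ arguments.

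The cleanest way to close the absorption is to work simultaneously with $q=\infty$ and $q=2n$ in the product $v\nabla K*v$, using $\|\nabla K\|_1<\infty$ from Lemma \ref{lem;bessel}. The general $q\in[n,\infty]$ then follows by interpolating between the $q=n$ bound (bounded, by \eqref{decay;va}) and the $q=\infty$ limit. When $q=n$ the claim needs a direct argument instead, via the $L^n$ estimate of $I_1,I_2$ with the same splitting.
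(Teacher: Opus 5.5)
\textbf{Gap: the case $n=1$.} Several parts of your proposal are sound: the reduction to $I(t)=v(t)-\mathrm{e}^{t\mathcal{L}_A}v_0$ via Theorem~\ref{t2}, the $O(\sqrt{\delta})$ bound on $(0,\delta t)$, and the limsup absorption (finiteness coming from \eqref{asy;va}). For $n\ge 2$ the density device does produce the $o(1)$, and any $1\le r<n$ works. The argument fails for $n=1$, which the statement includes and which is the case behind the Gaussian profile.

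There is no Lebesgue exponent $r<n=1$. Taking $r=1$ makes your gain factor $t^{-\frac n2(\frac1r-\frac1n)}$ identically $1$. No dense class of $w_0$ can help. By Theorem~\ref{t2}, $s^{1/2}\|\mathrm{e}^{s\mathcal{L}_A}w_0\|_\infty\to(4\pi c_\ast)^{-1/2}\bigl|\int_{\mathbb{R}}w_0\,\mathrm{d}x\bigr|$, and $\int w_0$ stays close to $\int v_0$ under $L^1$-approximation. So Schwartz data decay no faster in $L^\infty$ than general $L^1$ data. With your estimates the quadratic term in the linear flow is then of critical size only. For example,
\[
t^{-\frac12(1-\frac1q)-\frac12}\int_{\delta t}^{t/2}\|L(s)\|_1\|\partial_xK\|_1\|L(s)\|_\infty\,\mathrm{d}s\le C\,t^{-\frac12(1-\frac1q)}\|v_0\|_1^2,\qquad L(s)=\mathrm{e}^{s\mathcal{L}_A}v_0 ,
\]
and this is not $o\bigl(t^{-\frac12(1-\frac1q)}\bigr)$. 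Your absorption therefore yields only $\limsup\le C\|v_0\|_1^2$, not $0$.

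\textbf{The missing idea.} Move the derivative off the kernel. Write $\nabla K*f=K*\nabla f$, use $K\in L^1(\mathbb{R}^n)$ (Lemma~\ref{lem;bessel} with $\alpha=0$), and apply \eqref{est;nlrlp}:
\[
\|\nabla K*\mathrm{e}^{s\mathcal{L}_A}v_0\|_\infty\le\|K\|_1\|\nabla\mathrm{e}^{s\mathcal{L}_A}v_0\|_\infty\le Cs^{-1}\|v_0\|_n .
\]
After multiplication by $t^{\frac n2(\frac1n-\frac1q)}$, the extra $s^{-1/2}$ makes the purely linear-flow quadratic term of order $t^{-1/2}\log(1/\delta)$ on $(\delta t,t/2)$ and $t^{-1/2}$ on $(t/2,t)$. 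This holds for every $n\ge1$ and every $v_0\in L^n$, so the density splitting becomes unnecessary.

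This is exactly the paper's mechanism. The paper compares $v$ directly with $\mathrm{e}^{c_\ast t\Delta}v_0$ and writes $v-\mathrm{e}^{c_\ast t\Delta}v_0=V_1+\dots+V_4$. It gets extra decay for $V_2$, the term quadratic in the heat flow, from $K*\nabla\mathrm{e}^{c_\ast s\Delta}v_0$, splitting the time integral at $s=1$ (where $\|\nabla K\|_1$ is used instead) and at $s=t/2$. It absorbs $V_3,V_4$, which are linear in the remainder, via smallness of $\|v_0\|_n$ in the quantity $F_q$ weighted by $(1+t)^{\sigma}$. This gives not just the limit but a rate $(1+t)^{-\sigma}$ for any $\sigma<1/4$, uniformly in $n\ge1$.
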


To prove Proposition \ref{prop;t31}, 
we rewrite \eqref{eq;ieqv} in the following form.
For $t>0$, we have 
\begin{equation}\label{r;v}
	v(t)
	-
	\mathrm{e}^{c_{\ast}t\Delta}v_0
	=
	\sum_{j=1}^4
		V_j(t), 
\end{equation}
where 
\begin{align}
	\label{eq;v1}
	V_1(t)
	\coloneqq \, &
	\mathrm{e}^{t\mathcal{L}_A}v_0
	-
	\mathrm{e}^{c_{\ast}t\Delta}v_0, \\
	\label{eq;v2}
	V_2(t)
	\coloneqq \, &
	-
	\int_0^t
		\nabla 
		\cdot 
		\mathrm{e}^{(t-s)\mathcal{L}_A}
		\Big[
			\mathrm{e}^{c_{\ast}s\Delta}v_0 
			\nabla 
			K*(\mathrm{e}^{c_{\ast}s\Delta}v_0)
		\Big]
	\, \mathrm{d}s, \\
	\label{eq;v3}
	V_3(t)
	\coloneqq \, &
	-
	\int_0^t
		\nabla 
		\cdot 
		\mathrm{e}^{(t-s)\mathcal{L}_A}
		\Big[
			(
				v(s)
				-
				\mathrm{e}^{c_{\ast}s\Delta}v_0
			)
			\nabla 
			K*v(s)
		\Big]
	\, \mathrm{d}s, \\
	\label{eq;v4}
	V_4(t)
	\coloneqq \, &
	-
	\int_0^t
		\nabla 
		\cdot 
		\mathrm{e}^{(t-s)\mathcal{L}_A}
		\Big[
			\mathrm{e}^{c_{\ast}s\Delta}v_0
			\nabla K*(v(s)-\mathrm{e}^{c_{\ast}s\Delta}v_0)
		\Big]
	\, \mathrm{d}s.
\end{align}
We next estimate $V_2(t)$, defined in \eqref{eq;v2}, in $L^q(\mathbb{R}^n)$.

\begin{lem}
Let $n \le q \le \infty$ and $0 < \sigma < 1/2$. 
Then there exists $C > 0$ independent of $t$ and $v_0$ such that 
\begin{equation}\label{Decay;v2}
	\sup_{t > 0}
	t^{\frac{n}2(\frac1n-\frac1q)}
	(1+t)^{\sigma}
	\|
		V_2(t)
	\|_q
	\le 
	C
	\|
		v_0
	\|_n.
\end{equation}
\end{lem}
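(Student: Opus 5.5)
The plan is to estimate $V_2$ directly from its Duhamel form, using only the $L^n$–$L^r$ heat estimates for $w(s)\coloneqq\mathrm{e}^{c_\ast s\Delta}v_0$ and \eqref{est;nlrlp}. The extra factor $(1+t)^{-\sigma}$ has to come from the chemotactic field $\nabla K*w$. Indeed, the naive bound $\|\nabla K*w(s)\|_\infty\le\|\nabla K\|_1\|w(s)\|_\infty\le Cs^{-1/2}\|v_0\|_n$ reproduces exactly the scaling $t^{-\frac n2(\frac1n-\frac1q)}$, with no gain, as in \eqref{asy;va} when $p=n$.

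\textbf{Key observation.} Since $w$ is smooth, we may move the derivative onto $w$: $\nabla K*w=K*\nabla w$, and $K\in L^1$ by Lemma~\ref{lem;bessel} with $\alpha=0$. Hence
\[
\|\nabla K*w(s)\|_\infty\le \min\{\|\nabla K\|_1\|w(s)\|_\infty,\ \|K\|_1\|\nabla w(s)\|_\infty\}\le C\min\{s^{-1/2},s^{-1}\}\|v_0\|_n .
\]
Interpolating, this gives $\|\nabla K*w(s)\|_\infty\le C s^{-1/2}(1+s)^{-\sigma'}\|v_0\|_n$ for every $\sigma'\in[0,1/2]$. This is where the gain in time comes from.

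\textbf{Estimating $V_2$.} Fix an auxiliary exponent $a\in[n,q]$ (with $a<\infty$) and apply \eqref{est;nlrlp} from $L^a$ to $L^q$ together with Hölder's inequality:
\[
\|V_2(t)\|_q\le C\|v_0\|_n^2\int_0^t (t-s)^{-\frac n2(\frac1a-\frac1q)-\frac12}\, s^{-\frac n2(\frac1n-\frac1a)-\frac12}(1+s)^{-\sigma'}\,\mathrm{d}s .
\]
Here $\|w(s)\|_a\le Cs^{-\frac n2(\frac1n-\frac1a)}\|v_0\|_n$. The exponent of $(t-s)$ must exceed $-1$, which forces $\frac n2(\frac1a-\frac1q)<\frac12$. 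Therefore take $a=q$ when $n\le q<\infty$, and some $a\in(n,\infty)$ when $q=\infty$. The exponent of $s$ is $\frac n{2a}-1>-1$, so there is no singularity at $s=0$. This settles $0<t\le1$, where $(1+t)^{\sigma}\le 2$ and we simply take $\sigma'=0$.

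For $t\ge1$ split the integral at $t/2$.
\begin{itemize}
\item On $(t/2,t)$, use $s\sim t$. The integral is then bounded by $C t^{-\frac n2(\frac1n-\frac1q)}(1+t)^{-\sigma'}$, and choosing $\sigma'=\sigma$ gives the claim on this piece.
\item On $(0,t/2)$, use $(t-s)\sim t$. It remains to bound $\int_0^{t/2}s^{\frac n{2a}-1}(1+s)^{-\sigma'}\,\mathrm{d}s$.
\end{itemize}

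\textbf{Main obstacle: the piece on $(0,t/2)$.} With $\sigma'=1/2$, this integral is $\lesssim t^{\frac n{2a}-\frac12}$ if $a<n$, which is forbidden, and $\lesssim\log t$ at the borderline $a=n$. In our case it is bounded, and the gain is only $t^{-\frac n{2a}+\frac n{2q}}\cdot t^{\frac n{2a}-\frac n{2q}}\cdots$, i.e. an exponent strictly below $1/2$ determined by $a$. To reach an arbitrary $\sigma<1/2$, I would choose $\sigma'\in(\sigma,1/2]$ and $a$ close to $n$, so that $\frac n{2a}-\sigma'\ge\frac n{2a}-\frac12$ is as close to $0$ as needed. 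Then
\[
\int_0^{t/2}s^{\frac n{2a}-1}(1+s)^{-\sigma'}\,\mathrm{d}s\le Ct^{\frac n{2a}-\sigma'}\quad\text{if }\tfrac n{2a}>\sigma',\qquad \le C\quad\text{otherwise}.
\]
Collecting powers of $t$ gives $t^{-\frac n2(\frac1n-\frac1q)}\,t^{-\min\{\sigma',\,\frac12-\frac n{2a}+\dots\}}$.

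If taking $a=q$ near $n$ is incompatible with a large $q$, I would first estimate $V_2$ in $L^{q_2}$ for some $q_2$ slightly above $n$. The case $q=\infty$ would then follow by a second Duhamel step, or by interpolating between an $L^{q_2}$ bound and a crude $L^\infty$ bound. This bookkeeping of exponents, which avoids the logarithm that forces $\sigma<1/2$, is the step I expect to require the most care.
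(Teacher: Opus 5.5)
Your key mechanism is the paper's. For large $s$ you move the gradient onto the heat flow, $\nabla K*w=K*\nabla w$ with $K\in L^1$, so the drift decays like $s^{-1}$ instead of $s^{-1/2}$; the resulting borderline logarithm is what keeps $\sigma<1/2$. The organization differs. For $t\ge 2$ the paper splits $(0,t)$ into $(0,1)$, $(1,t/2)$ and $(t/2,t)$. It puts the $L^\infty$ norm on $w$ and $L^n$ or $L^{2nq/(n+q)}$ on $\nabla K*w$, respectively $K*\nabla w$. It uses the $L^{2nq/(n+q)}$--$L^q$ smoothing on $(t/2,t)$ and for $t\le 2$, so $q=\infty$ needs no separate treatment, and the logarithm appears explicitly as $\int_1^{t/2}s^{-1}\,\mathrm{d}s$. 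You instead compress the splitting into the single drift bound $\|\nabla K*w(s)\|_\infty\le Cs^{-1/2}(1+s)^{-\sigma'}\|v_0\|_n$ and one $L^a$--$L^q$ estimate split at $t/2$. That is a legitimate and somewhat more economical route.

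Your final bookkeeping, however, is wrong as written, though the repair stays inside your framework.

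\emph{The gain exponent.} On $(0,t/2)$ the gain over $t^{-\frac n2(\frac1n-\frac1q)}$ is $\min\{\sigma',\frac{n}{2a}\}$, with a logarithm if the two are equal. It is not $\min\{\sigma',\frac12-\frac n{2a}+\cdots\}$.

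\emph{The choice $a=q$.} This choice is not forced, and it is what creates your difficulty: for large $q$ it gives a gain of only $\frac{n}{2q}$. Your own integrability condition $\frac n2(\frac1a-\frac1q)<\frac12$ holds for every $a\in[n,q]$ when $q<\infty$. So take $a=n$ and $\sigma'=\sigma$:
\[
\|V_2(t)\|_q\le C\|v_0\|_n^2\int_0^t(t-s)^{-1+\frac{n}{2q}}s^{-\frac12}(1+s)^{-\sigma}\,\mathrm{d}s\le Ct^{-\frac n2(\frac1n-\frac1q)}(1+t)^{-\sigma}\|v_0\|_n^2 .
\]
This holds because $\int_0^{t/2}s^{-1/2}(1+s)^{-\sigma}\,\mathrm{d}s\le Ct^{\frac12-\sigma}$, and on $(t/2,t)$ you have $\int_{t/2}^t(t-s)^{-1+\frac n{2q}}\,\mathrm{d}s=Ct^{\frac n{2q}}$.

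\emph{The case $q=\infty$.} Take $a\in(n,\frac{n}{2\sigma})$, again with $\sigma'=\sigma$. The $(t-s)$-exponent $-\frac12-\frac n{2a}$ exceeds $-1$. Since $\frac n{2a}>\sigma$, you get $\int_0^{t/2}s^{\frac n{2a}-1}(1+s)^{-\sigma}\,\mathrm{d}s\le Ct^{\frac n{2a}-\sigma}$, so both pieces gain exactly $\sigma$.

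\emph{Your fallbacks.} No second Duhamel step is needed. Drop the interpolation fallback: interpolating an $L^{q_2}$ bound with gain $\sigma$ against an $L^\infty$ bound without gain only yields $\theta\sigma$ for some $\theta<1$, so it cannot reach an arbitrary $\sigma<1/2$.

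As in the paper, the right-hand side comes out as $\|v_0\|_n^2$. This is $\le\|v_0\|_n$ because $\|v_0\|_n<\varepsilon_*\le 1$.
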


\begin{proof}
Let $0 < t \le 2$. 
Then we use \eqref{est;nlrlp}, 
$L^p$-$L^q$ estimates for the heat semigroup, 
Lemma \ref{lem;bessel}
and Young's inequality to get 
\begin{equation}\label{est;v2tl}
	\begin{aligned}
		\|
			V_2(t)
		\|_q
		\le \, & 
		C
		\int_0^t
			(t-s)^{-\frac{n}2(\frac{n+q}{2nq}-\frac1q)-\frac12}
			\|
				\mathrm{e}^{c_{\ast}s\Delta}v_0
				\nabla K*(\mathrm{e}^{c_{\ast}s\Delta}v_0)
			\|_{\frac{2nq}{n+q}}
		\, \mathrm{d}s \\
		\le \, & 
		C
		\int_0^t
			(t-s)^{-\frac34+\frac{n}{4q}}
			\|
				\mathrm{e}^{c_{\ast}s\Delta}v_0
			\|_\infty
			\|
				\nabla K
			\|_1
			\|
				\mathrm{e}^{c_{\ast}s\Delta}v_0
			\|_{\frac{2nq}{n+q}}
		\, \mathrm{d}s \\
		\le \, & 
		C
		\|
			v_0
		\|_n^2
		\int_0^t
			(t-s)^{-\frac34+\frac{n}{4q}}
			s^{-\frac34+\frac{n}{4q}}
		\, \mathrm{d}s\\
		\le \, & 
		C
		t^{-\frac{n}2(\frac1n-\frac1q)}(1+t)^{-\sigma}
		\|
			v_0
		\|_n^2
		.
\end{aligned}
\end{equation}
On the other hand, let $t\ge 2$. 
Then we split the function $V_2(t)$ into three parts: 
\begin{equation}\label{eq;v2r}
	V_2(t)
	=
	-
	V_{21}(t)
	-
	V_{22}(t)
	-
	V_{23}(t),
\end{equation}
where 
\begin{align*}
	V_{21}(t)
	\coloneqq \, &
	\int_0^1
		\nabla 
		\cdot 
		\mathrm{e}^{(t-s)\mathcal{L}_A}
		\left[
			\mathrm{e}^{c_{\ast}s\Delta}v_0
			\nabla K*(\mathrm{e}^{c_{\ast}s\Delta}v_0)
		\right]
	\, \mathrm{d}s, \\
	V_{22}(t)
	\coloneqq \, &
	\int_1^{\frac{t}{2}}
		\nabla 
		\cdot 
		\mathrm{e}^{(t-s)\mathcal{L}_A}
		\left[
			\mathrm{e}^{c_{\ast}s\Delta}v_0
			\nabla K*(\mathrm{e}^{c_{\ast}s\Delta}v_0)
		\right]
	\, \mathrm{d}s, \\
	V_{23}(t)
	\coloneqq \, &
	\int_{\frac{t}{2}}^t
		\nabla 
		\cdot 
		\mathrm{e}^{(t-s)\mathcal{L}_A}
		\left[
			\mathrm{e}^{c_{\ast}s\Delta}v_0
			\nabla K*(\mathrm{e}^{c_{\ast}s\Delta}v_0)
		\right]
	\, \mathrm{d}s.
\end{align*}
Using \eqref{est;nlrlp}, $L^p$-$L^q$ estimates for the heat semigroup, 
Lemma \ref{lem;bessel} and Young's inequality again, we have  
\begin{equation}\label{est;v21th}
\begin{aligned}
	\|V_{21}(t)\|_q 
	\le \, & 
	C
	\int_0^1
		(t-s)^{-\frac{n}2(\frac1n-\frac1q)-\frac12}
		\|
			\mathrm{e}^{c_{\ast}s\Delta}v_0
			\nabla K*(\mathrm{e}^{c_{\ast}s\Delta}v_0)
		\|_n\, 
	\mathrm{d}s \\
	\le \, & 
	C
	(t-1)^{-\frac{n}2(\frac1n-\frac1q)-\frac12}
	\int_0^1
		\|
			\mathrm{e}^{c_{\ast}s\Delta}v_0
		\|_\infty
		\|
			\nabla K
		\|_1
		\|
			\mathrm{e}^{c_{\ast}s\Delta}v_0
		\|_n
	\, \mathrm{d}s \\
	\le \, & 
	C
	t^{-\frac{n}2(\frac1n-\frac1q)-\frac12}
	\|
		v_0
	\|_n^2
	\int_0^1
		s^{-\frac12}
	\, \mathrm{d}s \\
	\le \, & 
	C
	t^{-\frac{n}2(\frac1n-\frac1q)}(1+t)^{-\sigma}
	\|
		v_0
	\|_n^2.
\end{aligned}
\end{equation} 
Also, noting 
$
	\nabla K*(\mathrm{e}^{c_{\ast}s\Delta}v_0)
	=
	K*(\nabla \mathrm{e}^{c_{\ast}s\Delta}v_0)
$
and
arguing as in the proofs of \eqref{est;v2tl} and \eqref{est;v21th},
we obtain 
\begin{equation}\label{est;v22th}
\begin{aligned}
	\|
		V_{22}(t)
	\|_q 
	\le \, & 
	C
	\int_1^{\frac{t}{2}}
		(t-s)^{-\frac{n}2(\frac1n-\frac1q)-\frac12}
		\|
			\mathrm{e}^{c_{\ast}s\Delta}v_0 
			K*(\nabla\mathrm{e}^{c_{\ast}s\Delta}v_0)
		\|_n
	\, \mathrm{d}s \\
	\le \, & 
	C
	t^{-\frac{n}2(\frac1n-\frac1q)-\frac12}
	\int_1^{\frac{t}{2}}
		\|
			\mathrm{e}^{c_{\ast}s\Delta}v_0
		\|_\infty
		\|
			K
		\|_1
		\|
			\nabla
			\mathrm{e}^{c_{\ast}s\Delta}v_0
		\|_n
	\, \mathrm{d}s \\
	\le \, & 
	C
	t^{-\frac{n}2(\frac1n-\frac1q)-\frac12}
	\|
		v_0
	\|_n^2
	\int_1^{\frac{t}{2}}
		s^{-1}
	\, \mathrm{d}s \\
	\le \, & 
	C
	t^{-\frac{n}2(\frac1n-\frac1q)}
	t^{-\sigma}
	\|
		v_0
	\|_n^2
	\int_1^{\infty}
		s^{-\frac32+\sigma}
	\, \mathrm{d}s \\
	\le \, & 
	C
	t^{-\frac{n}2(\frac1n-\frac1q)}
	(1+t)^{-\sigma}
	\|
		v_0
	\|_n^2
	,
\end{aligned}
\end{equation}
and 
\begin{equation}\label{est;v23th}
\begin{aligned}
	\|
		V_{23}(t)
	\|_q 
	\le \, & 
	C
	\int_{\frac{t}{2}}^{t}
		(t-s)^{-\frac{n}{2}(\frac{n+q}{2nq}-\frac1q)-\frac12}
		\|
			\mathrm{e}^{c_{\ast}s\Delta}v_0 
			K*(\nabla\mathrm{e}^{c_{\ast}s\Delta}v_0)
		\|_{\frac{2nq}{n+q}}
	\, \mathrm{d}s \\
	\le \, & 
	C
	\int_{\frac{t}{2}}^{t}
		(t-s)^{-\frac34+\frac{n}{4q}}
		\|
			\mathrm{e}^{c_{\ast}s\Delta}v_0
		\|_\infty
		\|
			K
		\|_1
		\|
			\nabla\mathrm{e}^{c_{\ast}s\Delta}v_0
		\|_{\frac{2nq}{n+q}}
	\, \mathrm{d}s \\
	\le \, & 
	C
	\|
		v_0
	\|_n^2
	\int_{\frac{t}{2}}^t
		(t-s)^{-\frac34+\frac{n}{4q}}
		s^{-\frac54+\frac{n}{4q}}
	\, \mathrm{d}s \\
	\le \, & 
	C
	t^{-\frac{n}2(\frac1n-\frac1q)}
	(1+t)^{-\sigma}
	\|
		v_0
	\|_n^2.
\end{aligned}
\end{equation}
Hence, combining \eqref{eq;v2r}, \eqref{est;v21th}, \eqref{est;v22th} and \eqref{est;v23th} leads to  
\[
	t^{\frac{n}2(\frac1n-\frac1q)}
	(1+t)^{\sigma}
	\|
		V_{2}(t)
	\|_q
	\le 
	C
	\|
		v_0
	\|_n^2
\]
for $t\ge 2$, 
which together with \eqref{est;v2tl} implies \eqref{Decay;v2}. 
\end{proof}

It remains to derive $L^q$-estimates for $V_3(t)$ and $V_4(t)$ defined by \eqref{eq;v3} and \eqref{eq;v4}, 
respectively. 
\begin{lem}
Assume $n \le q \le \infty$ and $0 < \sigma < 1/4$. 
Then there exists $C>0$ independent of $t$ and $v_0$
such that  
\begin{equation}\label{Decay;v3v4}
	t^{\frac{n}2(\frac1n-\frac1q)}
	(1+t)^{\sigma}
	(
		\|
			V_3(t)
		\|_q
		+
		\|
			V_4(t)
		\|_q
	)
	\le 
	C
	F_q(t)
	\|
		v_0
	\|_n
, \quad t>0, 
\end{equation}
where 
\begin{equation}\label{eq;Fq}
	F_q(t)
	\coloneqq 
	\sup_{0<\tau\le t}
		\tau^{\frac{n}2(\frac1n-\frac1q)}
		(1+\tau)^\sigma
		\|
			v(\tau)
			-
			\mathrm{e}^{c_{\ast}\tau\Delta}v_0
		\|_q
	.
\end{equation}
\end{lem}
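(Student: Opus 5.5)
We estimate $V_3$ and $V_4$ in the same way as $V_2$, but now one factor of the bilinear term is the difference $w(s)\coloneqq v(s)-\mathrm{e}^{c_\ast s\Delta}v_0$. By definition of $F_q$,
\[
	\|w(s)\|_q\le s^{-\frac n2(\frac1n-\frac1q)}(1+s)^{-\sigma}F_q(t),\qquad 0<s\le t .
\]
The other factor is controlled by \eqref{decay;va} with $p=n$, or by the heat semigroup $L^n$--$L^r$ bounds: $\|v(s)\|_r+\|\mathrm{e}^{c_\ast s\Delta}v_0\|_r\le Cs^{-\frac n2(\frac1n-\frac1r)}\|v_0\|_n$ for $n\le r\le\infty$. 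Since $F_q$ is nondecreasing, $F_q(s)\le F_q(t)$, and it suffices to bound the time integrals.

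For $V_3$, apply \eqref{est;nlrlp} from $L^{r}$ to $L^q$ with $1/r=1/q+1/(2n)$, so that $r\ge\frac{2nq}{n+q}$. Hölder's inequality together with Young's inequality and $\nabla K\in L^1$ (Lemma \ref{lem;bessel}) gives
\[
	\|w\,\nabla K*v\|_r\le\|w\|_q\|\nabla K\|_1\|v\|_{2n}.
\]
Because $\|v(s)\|_{2n}\le Cs^{-1/4}\|v_0\|_n$, we obtain
\[
	\|V_3(t)\|_q\le C\|v_0\|_nF_q(t)\int_0^t(t-s)^{-\frac34}s^{-\frac n2(\frac1n-\frac1q)-\frac14}(1+s)^{-\sigma}\,\mathrm ds .
\]
Both exponents are $<1$ since $q\ge n$. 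Splitting at $t/2$:
\begin{itemize}
	\item On $(t/2,t)$ the integral is bounded by $Ct^{-\frac n2(\frac1n-\frac1q)}(1+t)^{-\sigma}$.
	\item On $(0,t/2)$ it is bounded by $Ct^{-\frac34}\int_0^{t/2}s^{-\frac12+\frac n{2q}-\frac14}(1+s)^{-\sigma}\,\mathrm ds$. For large $t$ this is $\lesssim t^{-\frac12+\frac n{2q}-\sigma}=t^{-\frac n2(\frac1n-\frac1q)}t^{-\sigma}$. For small $t$ it is $\lesssim t^{-\frac n2(\frac1n-\frac1q)}$. Both are acceptable.
\end{itemize}

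For $V_4$ the roles are exchanged. We put $\mathrm{e}^{c_\ast s\Delta}v_0$ in $L^{2n}$ and $\nabla K*w$ in $L^q$, using $\|\nabla K*w\|_q\le\|\nabla K\|_1\|w\|_q$. The same integral appears, so the same bound follows.

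The main obstacle is the endpoint $q=\infty$ and, more generally, large $q$ when $n=1$. There the weight $s^{-\frac n2(\frac1n-\frac1q)-\frac14}$ approaches $s^{-3/4}$ and the kernel is $(t-s)^{-3/4}$. This is still integrable, but the argument needs $r\ge1$, i.e. $1/q+1/(2n)\le1$, which holds for $n\ge1$, $q\ge n$ except the case $n=1$, $q<2$. In that case I would instead take $r=1$, that is, use $\|w\|_q\|v\|_{q'}$, or use \eqref{est;nlrlp} from $L^{q}$ directly with $\|v\|_\infty\le Cs^{-1/2}\|v_0\|_1$, which gives the kernel $(t-s)^{-1/2}$ and the weight $s^{-\frac12(1-\frac1q)-\frac12}$. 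Near $s=0$ one must check that the total exponent stays below $1$: $\frac12(1-\frac1q)+\frac12<1$ is fine.

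The restriction $\sigma<1/4$ enters when integrating the $(1+s)^{-\sigma}$ tail against $s^{-\frac34+\frac n{2q}}$ in the range $(0,t/2)$. The exponent gap $\frac14$ there must dominate $\sigma$, so that the integral grows like $t^{\frac14+\frac n{2q}-\sigma}$ rather than producing a loss. With the dependence on $q$ tracked in this way, adding the two pieces gives \eqref{Decay;v3v4}.
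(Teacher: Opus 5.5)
Your argument is correct and follows the paper's proof. Both bound the Duhamel term via \eqref{est;nlrlp}, use H\"older and Young with $\nabla K\in L^1$ together with the decay \eqref{decay;va}, and split the time integral at $t/2$. The restriction $\sigma<1/4$ enters exactly where you say.

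The only real difference is the intermediate exponent. The paper takes $r=\frac{2nq}{n+q}$ and puts the other factor in $L^{\frac{2nq}{q-n}}$. This gives the symmetric integrand $(t-s)^{-\frac34+\frac{n}{4q}}s^{-\frac34+\frac{n}{4q}}(1+s)^{-\sigma}$, and $r\ge 1$ holds automatically for all $q\ge n\ge 1$. So your separate (valid) treatment of $n=1$, $1\le q<2$ is not needed. Note also that your choice $1/r=1/q+1/(2n)$ gives $r\le\frac{2nq}{n+q}$, not $r\ge\frac{2nq}{n+q}$ as you wrote.
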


\begin{proof}
We estimate only $V_3(t)$, since $ V_4(t) $ can be treated in the same way. 
Note that 
\[
	\frac{1+t}{1+s}
	\le 
	\frac{t}{s}
\]
for $0 < s \le t$. 
By \eqref{est;nlrlp}, \eqref{decay;va}, Lemma \ref{lem;bessel}, 
H\"{o}lder's and Young's inequalities, 
we have
\begin{align*}
	\|
		V_3(t)
	\|_q
	\le \, & 
	C
	\int_0^t
		(t-s)^{-\frac{n}2(\frac{n+q}{2nq}-\frac1q)-\frac12}
		\|
			(
				v(s)-\mathrm{e}^{c_{\ast}s\Delta}v_0
			)
			\nabla K*v(s)
		\|_{\frac{2nq}{n+q}}
	\, \mathrm{d}s \\
	\le \, &
	C
	\int_0^t
		(t-s)^{-\frac34+\frac{n}{4q}}
		\|
			v(s)-\mathrm{e}^{c_{\ast}s\Delta}v_0
		\|_q
		\|
			\nabla K
			*
			v(s)
		\|_{\frac{2nq}{q-n}}
	\, \mathrm{d}s \\
	\le \, & 
	C
	\int_0^t
		(t-s)^{-\frac34+\frac{n}{4q}}
		\|
			v(s)-\mathrm{e}^{c_{\ast}s\Delta}v_0
		\|_q
		\|
			\nabla K
		\|_1
		\|
			v(s)
		\|_{\frac{2nq}{q-n}}
	\, \mathrm{d}s \\
	\le \, & 
	C
	F_q(t)
	\|
		v_0
	\|_n
	\int_0^t
		(t-s)^{-\frac34+\frac{n}{4q}}
		s^{-\frac34+\frac{n}{4q}}
		(1+s)^{-\sigma}
	\, \mathrm{d}s \\
	\le \, &
	C
	t^{-\frac{n}2(\frac1n-\frac1q)}
	(1+t)^{-\sigma}
	F_q(t)
	\|
		v_0
	\|_n
\end{align*}
for $t>0$. 
Thus this yields \eqref{Decay;v3v4}. 
\end{proof}

\begin{proof}[Proof of Proposition \ref{prop;t31}]
Let $n \le q \le \infty$ and $\sigma = \sigma_0$ for any fixed $0 < \sigma_0 < 1/4$. 
It follows from \eqref{asy;la} that  
the function $V_1(t)$ given by \eqref{eq;v1} is estimated as 
\[
	\|
		V_1(t)
	\|_q
	=
	\|\mathrm{e}^{t\mathcal{L}_A}v_0-\mathrm{e}^{c_{\ast} t\Delta}v_0\|_q\\
	\le 
	C
	t^{-\frac{n}2(\frac1n-\frac1q)}
	(1+t)^{-\sigma}
	\|
		v_0
	\|_n
\]
for $t>0$. 
Therefore, by \eqref{r;v}, \eqref{Decay;v2} and \eqref{Decay;v3v4}, we have
\begin{equation}\label{est;vh0}
\begin{split}
	t^{\frac{n}2(\frac1n-\frac1q)}
	(1+t)^{\sigma}
	\|
		v(t)
		-
		\mathrm{e}^{c_{\ast} t \Delta}
		v_0
	\|_q 
	\le \, &
	C_1
	(
	\|
		v_0
	\|_n
	+
	\|
		v_0
	\|_n^2
	)
	+
	C_2
	F_q(t)
	\|
		v_0
	\|_n
	\\
	\le \, &
	2
	C_1
	\|
		v_0
	\|_n
	+
	C_2
	F_q(t)
	\|
		v_0
	\|_n
\end{split}
\end{equation}
for $t > 0$ and $\| v_0 \|_n < \varepsilon_*$, 
where the constant $\varepsilon_\ast$ is the one given in Proposition \ref{prop;qgs}.
Taking the supremum over $0 < t \le \tau$ in \eqref{est;vh0} and using the fact that $F_q$ is nondecreasing in $t$ by definition,
we obtain
\begin{equation}\label{est;Fq}
	F_q(\tau) 
	\le 
	2
	C_1
	\|
		v_0
	\|_n
	+
	C_2
	F_q(\tau)
	\|
		v_0
	\|_n
	, \qquad \tau>0,
\end{equation}
where $C_1, C_2 > 0$ are independent of $v_0$ and $\tau$.

Choose 
$
	\varepsilon_0 
	=
	\varepsilon_0(q)
	\in
	(
		0,
		\min
		\{ 
			1, \varepsilon_*
		 \}
	]
$ 
so that 
$
	C_2 \varepsilon_0 \le 1/2
$.
Then, 
\eqref{est;Fq} gives
\[
	F_q(\tau)
	\le 
	2
	C_1
	\|
		v_0
	\|_n
	+
	\frac{1}{2}
	F_q(\tau), \quad \tau>0, 
\]
and therefore
$
	F_q(\tau) 
	\le 
	4
	C_1
	\|
		v_0
	\|_n
$ 
for $\tau > 0$. 
Consequently, 
\[
	\tau^{\frac{n}2(\frac1n-\frac1q)}
	\|
		v(\tau)
		-
		\mathrm{e}^{c_{\ast} \tau \Delta}
		v_0
	\|_q 
	\le 
	F_q(\tau)
	(1+\tau)^{-\sigma}
	\le 
	4
	(1+\tau)^{-\sigma}
	C_1
	\|
		v_0
	\|_n
	, \qquad \tau>0,
\]
and \eqref{Asy;mt} holds. 
\end{proof}

\subsection{Proof of Theorem \ref{t3}}

\begin{proof}[Proof of Theorem \ref{t3}]
Combining Propositions \ref{prop;t32} and \ref{prop;t31} proves Theorem \ref{t3}.
\end{proof}

\section*{Acknowledgments}
H. Wakui was supported by JSPS KAKENHI Grant Numbers JP23K19005 and JP25K07080.
T. Yamada was supported by JSPS KAKENHI Grant Number JP24K06806.

\section*{AI Use Disclosure}
During the preparation of this manuscript, the authors used
ChatGPT (OpenAI) to assist with language editing and rephrasing,
the identification of possible typographical and consistency
issues, and the formatting of bibliographic information.
The mathematical results and proofs were developed independently
by the authors before this editorial assistance.
Changes adopted in response to ChatGPT's suggestions were limited
to improving the presentation of this independently developed work.
The authors reviewed and verified all adopted changes and take
full responsibility for the accuracy and integrity of the manuscript.

\section*{Conflict of Interest Statement}
The authors declare no conflicts of interest.

\section*{Data Availability Statement}
Data sharing is not applicable---the paper describes entirely
theoretical research.

\bibliographystyle{abbrv}
\bibliography{ARKS_asymptotic_references}

\end{document}